\documentclass[10pt]{amsart}

\RequirePackage{etex}
\usepackage[utf8]{inputenc}
\usepackage[T1]{fontenc}
\usepackage[english]{babel}

\usepackage[mathscr]{euscript}
\usepackage{caption}
\usepackage{subcaption}
\usepackage{datetime}
\usepackage{url}
\usepackage{verbatim}
\usepackage{float}
\usepackage{lipsum}
\usepackage[dvipsnames]{xcolor}

\usepackage{titletoc}
\usepackage{textcomp}
\usepackage{anysize}
\usepackage{fancyhdr}
\usepackage{calrsfs}

\usepackage{amssymb}
\usepackage{amsmath,mathtools}
\usepackage{amsfonts}
\usepackage{amscd}
\usepackage{amsthm}
\usepackage{dsfont}

\usepackage{latexsym}
\usepackage{enumerate}
\usepackage[shortlabels]{enumitem}
\usepackage{array}
\usepackage{stackrel}
\usepackage{stmaryrd}
\usepackage{stackengine}

\usepackage{graphicx}
	\graphicspath{ {Images/} }
\usepackage{color,graphics}
\usepackage{tikz}
	\usetikzlibrary{arrows,automata,patterns,decorations.pathmorphing,decorations.pathreplacing, decorations.markings,shapes.misc}
\usepackage{tikz-cd}
	\usetikzlibrary{matrix,arrows,calc,automata,trees}
\usepackage[all,cmtip]{xy}
\usepackage{tkz-graph}
\usepackage{pgfplots,caption}
\pgfplotsset{compat=1.9}

\usepackage{lscape}
\usepackage[active]{srcltx}
\usepackage{rotating}

\usepackage[hypertexnames=false]{hyperref}

\AtBeginDocument{
   \def\MR#1{}
}

\makeatletter
\renewcommand{\tocsection}[3]{
  \indentlabel{\@ifnotempty{#2}{\bfseries\ignorespaces#1 #2\quad}}\bfseries#3}
\renewcommand{\tocsubsection}[3]{
  \indentlabel{\@ifnotempty{#2}{\ignorespaces#1 #2\quad}}#3}
\renewcommand{\tocsubsubsection}[3]{
  \indentlabel{\@ifnotempty{#2}{\ignorespaces#1 #2\quad}}#3}

\newcommand\@dotsep{4.5}
\def\@tocline#1#2#3#4#5#6#7{\relax
  \ifnum #1>\c@tocdepth 
  \else
    \par \addpenalty\@secpenalty\addvspace{#2}%
    \begingroup \hyphenpenalty\@M
    \@ifempty{#4}{%
      \@tempdima\csname r@tocindent\number#1\endcsname\relax
    }{%
      \@tempdima#4\relax
    }%
    \parindent\z@ \leftskip#3\relax \advance\leftskip\@tempdima\relax
    \rightskip\@pnumwidth plus1em \parfillskip-\@pnumwidth
    #5\leavevmode\hskip-\@tempdima{#6}\nobreak
    \leaders\hbox{$\m@th\mkern \@dotsep mu\hbox{.}\mkern \@dotsep mu$}\hfill
    \nobreak
    \hbox to\@pnumwidth{\@tocpagenum{\ifnum#1=1\bfseries\fi#7}}\par
    \nobreak
    \endgroup
  \fi}
\AtBeginDocument{%
\expandafter\renewcommand\csname r@tocindent0\endcsname{0pt}
}
\def\l@subsection{\@tocline{2}{0pt}{2.5pc}{5pc}{}}

\makeatother

\fancypagestyle{my_own_style}{
		\fancyhf{}
		\fancyfoot[C]{Page \thepage}
}

\newcommand{\N}{{\mathbb N}}
\newcommand{\Z}{{\mathbb Z}}

\newcommand{\R}{{\mathbb R}}

\DeclareMathAlphabet{\pazocal}{OMS}{zplm}{m}{n}

\newcommand{\calB}{{\pazocal B}}

\newcommand{\calO}{{\pazocal O}}
\newcommand{\calP}{{\pazocal P}}

\newcommand{\ra}{\rightarrow}

\newcommand{\ol}{\overline}
\newcommand{\ul}{\underline}

\newcommand{\xddots}{%
  \raise 4pt \hbox {.}
  \mkern 6mu
  \raise 1pt \hbox {.}
  \mkern 6mu
  \raise -2pt \hbox {.}
}

\numberwithin{equation}{section}

\theoremstyle{plain}

\newtheorem{theorem}{Theorem}[section]
\newtheorem*{theorem*}{Theorem}
\newtheorem{lemma}[theorem]{Lemma}
\newtheorem{proposition}[theorem]{Proposition}
\newtheorem{corollary}[theorem]{Corollary}

\theoremstyle{definition}

\newtheorem{definition}[theorem]{Definition}
\newtheorem{examples}[theorem]{Examples}

\newtheorem*{remark*}{Remark}

\newtheorem*{remarks*}{Remarks}

\newtheorem*{assumption*}{Assumption}

\newtheorem{observations}[theorem]{Observations}
\newtheorem{construction}[theorem]{Construction}

\newtheorem{notation}[theorem]{Notation}

\makeatletter
\newenvironment{example}[1][]{%
  \refstepcounter{theorem}%
  \par\addvspace{\topsep}%
  \noindent
  \textbf{Example \thetheorem.}
  \if\relax\detokenize{#1}\relax\else\ \textbf{#1.}\fi
}{%
  \par\addvspace{\topsep}%
}
\makeatother

\tikzset{
  on each segment/.style={
    decorate,
    decoration={
      show path construction,
      moveto code={},
      lineto code={
        \path [#1]
        (\tikzinputsegmentfirst) -- (\tikzinputsegmentlast);
      },
      curveto code={
        \path [#1] (\tikzinputsegmentfirst)
        .. controls
        (\tikzinputsegmentsupporta) and (\tikzinputsegmentsupportb)
        ..
        (\tikzinputsegmentlast);
      },
      closepath code={
        \path [#1]
        (\tikzinputsegmentfirst) -- (\tikzinputsegmentlast);
      },
    },
  },
  mid arrow/.style={postaction={decorate,decoration={
        markings,
        mark=at position .5 with {\arrow[#1]{stealth}}
      }}},
}

\tikzset{
	cross/.style={cross out, draw=black, minimum size=2*(#1-\pgflinewidth), inner sep=0pt, outer sep=0pt},
	cross/.default={1pt}
}

\tikzset{
  symbol/.style={
    draw=none,
    every to/.append style={
      edge node={node [sloped, allow upside down, auto=false]{$#1$}}}
  }
}

\title[Combinatorial approximations of dynamical systems: a separated graph approach]{Combinatorial approximations of dynamical systems: a separated graph approach}
\author{Joan Claramunt}
\address[J. Claramunt]{Av. Diagonal, 647, Planta 3, Pavelló H, Les Corts, 08028 Barcelona.}
\email{joan.claramunt@upc.edu}

\subjclass[2020]{Primary: 37B05, 37B10. Secondary: 05C63, 46L55}
\keywords{Cantor dynamics, homeomorphism, separated graph, shift, odometer, essentially minimal.}

\thanks{This work was supported by grants PID2023-147110NB-I00 and PID2023-146758NB-I00 funded by MICIU/AEI/10.13039/501100011033 and by ERDF/EU}

\date{\today}

\begin{document}

\pagestyle{plain}
 
\begin{abstract}
%
%
Separated graphs provide a powerful combinatorial tool for approximating dynamical systems. This paper details the explicit construction of Bratteli-like separated graphs --- a generalization of classical Bratteli diagrams --- that encode the dynamics of a homeomorphism $h$ on a totally disconnected, compact metric space $X$. Unlike standard approaches, the separated graph framework allows us to explicitly disentangle the static structure of the space from the dynamics of the homeomorphism. We provide a step-by-step exposition of this construction applied to four fundamental examples: the two-sided shift, the bit-wise NOT (global flip) map, the classical odometer map and the shift map on the one-point compactification of the integers. Finally, we briefly discuss how minimal (and, more generally, essentially minimal) dynamical systems can be read directly from the separated graph. This approach builds upon recent work by P. Ara and the author, which provides a graph-theoretic model for dynamical systems given by surjective local homeomorphisms defined on totally disconnected compact metric spaces.
\end{abstract}

\maketitle

\tableofcontents

\normalsize

\section{Introduction}\label{section-introduction}

The study of topological dynamical systems on totally disconnected compact metric spaces -- often referred to as Cantor dynamics, cf. \cite[Chapter 19]{GKPT2018} -- occupies a central role in modern dynamical systems. These spaces arise naturally in a variety of contexts. Most notably, they appear as shift spaces in symbolic dynamics \cite{Kitchens1998, LM2021, KL2016, GKPT2018, Bruin2022}, as boundaries of discrete groups and rooted trees in geometric group theory and arboreal dynamics \cite{Nekrashevych2005, HLvL2021}, and as maximal invariant sets (such as the Smale horseshoe) in hyperbolic smooth dynamical systems \cite{KH1995, Kurka2003}. A defining feature of totally disconnected compact metric spaces is that their topology is generated by a basis of clopen sets. This property is highly advantageous dynamically: it allows one to study the global action of a homeomorphism $h \colon X \to X$ through a sequence of finite combinatorial approximations. The premier tool for this is the use of Kakutani-Rohlin partitions. Originally developed in ergodic theory \cite{Kakutani1943, Rohlin1948} (cf. \cite{Glasner2003}), these partitions were adapted to topological dynamics by Putnam \cite{Putnam1989_article} and Herman, Putnam, and Skau \cite{HPS1992}, and have since become a ubiquitous method for modeling both minimal and aperiodic Cantor systems \cite{Medynets2006, DK2019}.

The modern era of Cantor dynamics is heavily characterized by the use of combinatorial, algebraic, and geometric invariants to classify systems up to various notions of equivalence. A cornerstone of this approach is the theory of Bratteli-Vershik models. The underlying combinatorial structures, Bratteli diagrams, were originally introduced in 1972 by O. Bratteli to classify approximately finite-dimensional (AF) $C^*$-algebras \cite{Bratteli1972}. In the 1980s, A. Vershik pioneered the use of these diagrams in ergodic theory, demonstrating that every ergodic measure-preserving transformation is isomorphic to an adic transformation on the path space of a Bratteli diagram \cite{Vershik1981}.

The pivotal bridge to topological dynamics was established by Herman, Putnam, and Skau \cite{HPS1992}, who adapted this framework to Cantor spaces. They proved that every minimal homeomorphism of a Cantor set can be represented as a Vershik map acting on the space of infinite paths of an ordered Bratteli diagram. This dictionary between topology and combinatorics ignited a vast research program. For instance, Giordano, Putnam, and Skau utilized Bratteli diagrams and ordered $K$-theory to completely classify Cantor minimal systems up to strong orbit equivalence and orbit equivalence \cite{GPS1995}.

Following \cite{HPS1992}, a significant direction of research has been the extension of Bratteli-Vershik models beyond minimal systems. Medynets \cite{Medynets2006} generalized the construction to aperiodic homeomorphisms of Cantor spaces. More recently, Downarowicz and Karpel \cite{DK2019} provided a topological characterization of which zero-dimensional systems are ``Bratteli-Vershikizable'', showing that such models exist if and only if the set of aperiodic points is dense, or its closure misses exactly one periodic orbit. For arbitrary compact metric zero-dimensional dynamical systems, Shimomura \cite{Shimomura2020} and others have further generalized these combinatorial models. These techniques have also been adapted to non-compact spaces, such as in standard Borel dynamics \cite{BDK2006}. Furthermore, these combinatorial models play an important role in the study of topological full groups \cite{Matui2006}. This connection has led to significant applications in group theory; notably, Juschenko and Monod utilized the topological full groups of Cantor minimal systems to provide the first examples of finitely generated, infinite, simple, amenable groups \cite{Matui2006, JM2013}.

Building upon this tradition of combinatorial encoding, recent work by P. Ara and the author \cite{AC2024} introduced a novel approximation method for dynamical systems $(X,h)$ using \textit{separated graphs}. This framework, originally developed in \cite{AG2011, AG2012} to handle $C^*$-algebras that escape the reach of standard graph theory, has since been used to address several problems in the field. Notably, these graphs were used to investigate the Realization Problem for refinement monoids \cite{ABP2020} and were instrumental in constructing counterexamples to the Tarski alternative for group actions \cite{AE2014}.

In our approach, the approximation is achieved by choosing an $h$-refined sequence of partitions of $X$ (see Definition \ref{definition:h_refined}) to associate an \textit{$h$-diagram} $(F,D)$ to the system. It was shown that this association is well-behaved at the level of $C^*$-algebras, naturally relating the separated graph $C^*$-algebra of $(F,D)$ to the $C^*$-algebra of the transformation groupoid of the system. Furthermore, this construction gave rise to a new class of dynamical systems, termed generalized finite shifts, which subsume one- and two-sided shifts of finite type.

This article, which expands upon a lecture given by the author at the \textit{School XXIV School of Mathematics <<Lluis Santaló>> -- Structure and Approximation of $C^*$-algebras} and the previously mentioned paper \cite{AC2024}, serves a dual purpose. First, in the spirit of the original lecture, our exposition emphasizes the use of concrete examples. To illustrate the different levels of abstraction, we systematically apply the general construction from \cite{AC2024} to four fundamental dynamical systems. These examples are carefully chosen to exhibit a variety of dynamical behaviors, thereby clarifying the mechanics of the $h$-diagram and building the reader's intuition.

Second, we build upon this expository foundation to present new results demonstrating how specific dynamical properties can be read directly from the combinatorial structure of $(F,D)$. In particular, we focus on two extremes of dynamical behavior: systems that are globally periodic, and systems that are essentially minimal. We introduce purely combinatorial conditions on $(F,D)$ that capture these behaviors, obtaining the following results (see Theorems \ref{theorem:global_periodicity} and \ref{theorem:ess_min}, also Definitions \ref{definition:global_periodicity} and \ref{definition:property_em}, respectively):
\begin{theorem*}
Let $X$ be a totally disconnected compact metric space and $h$ a homeomorphism on $X$. Let $(F,D)$ denote an $h$-diagram associated with $(X,h)$.
\begin{enumerate}
\item[(1)] $(X,h)$ is an essentially minimal dynamical system if and only if $(F,D)$ is an essentially minimal $h$-diagram.
\item[(2)] For any $m \in \N$, it holds that $h^m = \mathrm{id}_X$ if and only if $(F,D)$ has global periodicity $m$.
\end{enumerate}
\end{theorem*}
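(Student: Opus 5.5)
The plan is to reduce both statements to one \emph{dictionary lemma}, which translates the combinatorics of $(F,D)$ back into the action of $h$ on the $h$-refined sequence of partitions $\calP_0 \preceq \calP_1 \preceq \cdots$ used to build the $h$-diagram. I take this sequence in the sense of Definition \ref{definition:h_refined}, generating the topology of $X$. Recall the structure of $(F,D)$:
\begin{itemize}
\item the vertices at level $n$ are the clopen sets $Z \in \calP_n$;
\item at each vertex $Z$ of level $n+1$ the edges are split by $D$ into two groups;
\item the first group records the unique $W \in \calP_n$ with $Z \subseteq W$ (the ``static'' part);
\item the second group records the unique $W' \in \calP_n$ with $h(Z) \subseteq W'$ (the ``dynamic'' part), and possibly $h^{-1}$ as well.
\end{itemize}

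\textbf{Step 1 (dictionary lemma).} By induction on $k$, I would show the following. For a vertex $Z \in \calP_{n+k}$, concatenating $k$ dynamic edges (passing through levels $n+k-1,\dots,n$) ends at the unique $W \in \calP_n$ with $h^k(Z) \subseteq W$. Mixing dynamic edges with static edges computes the same thing for $h^j$, where $j$ is the number of dynamic edges used. The ingredients are that $h$-refinement makes $h(Z)$ land inside a single atom of the coarser partition, and that static edges are compatible with dynamic ones. This lemma is where the definitions enter, and everything afterwards is topology.

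\textbf{Step 2 (part (2)).} If $h^m=\mathrm{id}_X$, the lemma gives directly that, for every $n$ and every $Z\in\calP_{n+m}$, the $m$-step dynamic path from $Z$ ends at the same vertex as the $m$-step static path. That is the condition I expect global periodicity $m$ to express. Conversely, suppose the combinatorial condition holds. Then for every $x$ and every $n$, the points $h^m(x)$ and $x$ lie in the same atom of $\calP_n$. Since the partitions separate points (their union is a basis of the clopen topology), we get $h^m(x)=x$.

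\textbf{Step 3 (part (1)).} Recall that $(X,h)$ is essentially minimal when it has a unique minimal closed invariant set $Y$. Equivalently, there is a point $y$ such that every clopen neighbourhood $U$ of $y$ satisfies $\bigcup_{k\in\Z} h^k(U)=X$. By compactness the union can then be taken over $|k|\le N(U)$. I expect the combinatorial condition in Definition \ref{definition:property_em} to have the following form: there is a nested choice of vertices $v_n\in\calP_n$ such that for each $n$ there exist $N$ and a level $n'\ge n$ with the property that every vertex of level $n'$ reaches $v_n$ by a path containing at most $N$ dynamic (or inverse-dynamic) edges.
\begin{itemize}
\item For ``$\Rightarrow$'', let $v_n$ be the atoms containing $y$. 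Apply compactness to $U=v_n$ to get the bound $N$, then choose $n'$ large enough that every atom of $\calP_{n'}$ lies inside one set $h^{-k}(v_n)$. The last step uses the Lebesgue number lemma together with the fact that the partitions generate the topology. The dictionary lemma then turns this into paths.
\item For ``$\Leftarrow$'', the nested clopen sets $v_n$ intersect in a single point $y$. The path condition and the lemma show that every point of $X$ visits each $v_n$. Hence every closed invariant set meets every neighbourhood of $y$, so contains $y$, and therefore contains the closure $Y$ of the orbit of $y$. This makes $Y$ the unique minimal set.
\end{itemize}

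The main obstacle I expect is Step 3, and specifically matching the exact quantifiers of Definition \ref{definition:property_em}. Two points need care. First, the uniform bound $N$ has to be extracted via compactness. Second, the correct level $n'$ has to be chosen so that whole atoms, and not just individual points, are carried into $v_n$. If the definition uses only forward dynamic edges, I will first replace $\bigcup_{k\in\Z}$ by $\bigcup_{k\ge 0}$. This is legitimate because for essentially minimal systems the forward orbit of $Y$-neighbourhoods also covers $X$, a standard fact I would re-prove via compactness and the invariance of $Y$.
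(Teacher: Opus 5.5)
\textbf{Your dictionary lemma (Step 1) is false for the paper's diagram, and this breaks Step 2.} In Construction \ref{construction-h.diagram} the red edges do not all encode the same map. A red edge $f(Z',Z)$ with $Z\in\calP_n$ means $Z'\subseteq h(Z)$ when $n$ is even, but $Z'\subseteq h^{-1}(Z)$ when $n$ is odd.

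Hence a path from $Z'\in\calP_{n+k}$ to $Z\in\calP_n$ only gives $Z'\subseteq h^{s}(Z)$ with $s=\sum(-1)^{\ell}$. Here the sum runs over the red edges of the path, and $\ell$ is the level of each edge's range. So $s$ is a signed count, not the number of red edges. In particular, consecutive red edges cancel. This is exactly Observation \ref{observations-1}(2): the red $2$-path out of any vertex ends where the blue $2$-path ends, whatever $h$ is.

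Consequently, the condition you expect global periodicity to express (the pure red and pure blue $m$-paths from each vertex have the same range) does not characterize $h^m=\mathrm{id}_X$:
\begin{itemize}
\item for even $m$ it holds for every $h$, e.g.\ for the shift $\sigma$ with $m=2$, although $\sigma^2\neq\mathrm{id}_X$;
\item for odd $m$ it is equivalent to $h=\mathrm{id}_X$.
\end{itemize}
So each direction of your Step 2 fails for suitable $h$ and $m$.

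This is why Definition \ref{definition:global_periodicity} uses \emph{alternating} paths with $m$ red edges. In such a path all red edges have ranges at levels of a single parity, so their contributions add up to $h^{\pm m}$ instead of cancelling. The paper's proof works with these paths.
\begin{itemize}
\item Forward direction: suppose $h^m=\mathrm{id}_X$ and $\mu=f_0e_1f_1\cdots e_{m-1}f_{m-1}$ runs from $Z_{2m-1}$ to $Z_0\in\calP_n$ with $n$ even. Then $Z_{2m-1}\subseteq h(Z_{2m-3})\subseteq\cdots\subseteq h^m(Z_0)=Z_0$, and the blue path from $Z_{2m-1}$ up to level $n$ ends at $Z_0$. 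For $n$ odd, use $h^{-m}=\mathrm{id}_X$.
\item Converse: given $x$ and an odd $N\ge 2m-1$, build from $Z_N(x)$ the alternating path whose red edges have ranges at even levels. This gives $Z_N(x)\subseteq h^m(\tilde Z_{N-2m+1})$, i.e.\ $h^{-m}(x)\in\tilde Z_{N-2m+1}$. The blue path with the same endpoints forces $\tilde Z_{N-2m+1}=Z_{N-2m+1}(x)$, and your point-separation argument finishes.
\end{itemize}

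Part (1) survives once you replace the dictionary by its correct form, Lemma \ref{lemma-connected.translates}: for $Z_2\in\calP_{m+2i}$ and $Z_1\in\calP_m$, $Z_2\leadsto Z_1$ if and only if $Z_2\subseteq h^j(Z_1)$ for some $|j|\le i$.

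The factor $2$ in the level gap matters in your ``$\Rightarrow$''. To realise a prescribed $h^k$ with $|k|\le N$, red edges can only be used on levels of the right parity, so you need $n'-n\ge 2N$ with $n'-n$ even. Your Lebesgue-number step is valid but unnecessary: $h$-refinement gives $\calP_{n+k}\precsim h^j(\calP_n)$ for $|j|\le k$. So once the sets $h^j(v_n)$, $|j|\le N$, cover $X$, every atom of $\calP_{n+2N}$ already lies in one of them.

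Two further simplifications:
\begin{itemize}
\item Definition \ref{definition:property_em} only asks that every vertex of some level be connected to $r(e_i)$ by an arbitrary path, so your bound on the number of red edges is automatic.
\item The detour through forward orbits is unnecessary, since paths realise both $h$ and $h^{-1}$.
\end{itemize}

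With these corrections, your ``$\Leftarrow$'' is the paper's argument. Your ``$\Rightarrow$'' is a direct version of the paper's proof by contradiction. There, failure of (EM) at $Z_m(y)$ would make the sets $X\setminus W(Z_m(y);m,i)$ of Proposition \ref{proposition-nice.formula} a decreasing sequence of non-empty clopen sets. Their intersection then contains a point whose orbit avoids $Z_m(y)$, which is impossible.
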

\noindent As a consequence, we also obtain a combinatorial characterization of minimality (Corollary \ref{corollary-minimality}).

The paper is organized as follows. Section \ref{section-setup} sets out the necessary preliminaries concerning Cantor dynamics, sequences of refined partitions, and separated graphs. In Section \ref{section-the.construction}, we apply the general construction to our selected fundamental examples, providing explicit derivations of their associated $h$-diagrams. Finally, Section \ref{section:dynamical_properties} introduces the combinatorial notions of global periodicity and essential minimality for $h$-diagrams, relating them to their counterparts in the underlying dynamical system.

\section{The setup}\label{section-setup}

\subsection{Dynamical systems and partitions}\label{subsection-dynamical.systems.partitions}

We largely follow the framework of \cite{AC2024}, reproducing here some of the key concepts and proofs for the sake of completeness and to ensure the exposition is self-contained. We warn the reader that some notation and conventions may differ slightly from those in \cite{AC2024}.

Our dynamical systems of interest are pairs $(X,h)$ consisting of a homeomorphism $h$ defined on a totally disconnected compact metric space $X$. In most of our examples, $X$ will be taken to be the Cantor set, though the construction applies generally. In this paper, a \textit{partition} of the space $X$ is a finite collection $\calP$ of non-empty, pairwise disjoint closed and open (clopen) subsets of $X$ --- hereafter referred to as \textit{block} --- whose union is the whole space $X$.

Partitions of $X$ serve as our primary approximation tools for the space. To this end, we should be able to compare two partitions and decide which provides a finer approximation of $X$. This is achieved by introducing a partial order on the set of partitions of $X$: a partition $\calP_2$ is said to be \textit{finer} than another partition $\calP_1$ if every block of $\calP_2$ is contained in a (necessarily unique) block of $\calP_1$. This defines a partial order $\precsim$, where we write $\calP_2 \precsim \calP_1$ to denote that $\calP_2$ is finer than $\calP_1$.

Because $X$ is compact and metrizable, we can always find partitions of $X$ that are arbitrarily fine. Let $d$ denote a metric on $X$ witnessing the metrizability of its topology. The \textit{diameter} of a block $Z \in \calP$ is defined as the supremum of the distances between any two points in $Z$:
$$\operatorname{diam}(Z) := \sup\{d(x,y) \mid x,y \in Z\}.$$
We have the following straightforward approximation result.

\begin{lemma}\label{lemma:first_approx}
Given any $\varepsilon >0$, there exists a partition $\calP$ of $X$ such that $\operatorname{diam}(Z) < \varepsilon$ for every block $Z \in \calP$.
\end{lemma}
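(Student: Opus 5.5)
The plan is to combine compactness with the fact that a totally disconnected compact metric space has a basis of clopen sets. Clopen sets are closed under finite Boolean operations, so a finite clopen cover can be turned into a partition by disjointification.

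\emph{Step 1: a basis of small clopen sets.} Since $X$ is compact, Hausdorff and totally disconnected, it is zero-dimensional: every point has a neighbourhood basis of clopen sets. This is the standard fact that in compact Hausdorff spaces quasi-components coincide with components. I take it as known, as the introduction does. So for each $x \in X$ there is a clopen set $U_x$ with $x \in U_x \subseteq B(x,\varepsilon/3)$, and hence
$$\operatorname{diam}(U_x) \le 2\varepsilon/3 < \varepsilon.$$

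\emph{Step 2: compactness.} The family $\{U_x\}_{x\in X}$ is an open cover of $X$. Extract a finite subcover $U_1,\dots,U_n$.

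\emph{Step 3: disjointification.} Define
$$V_1 = U_1, \qquad V_k = U_k \setminus (U_1\cup\dots\cup U_{k-1}) \quad (2 \le k \le n).$$
Each $V_k$ is a finite Boolean combination of clopen sets, so it is clopen. The $V_k$ are pairwise disjoint, and their union is $U_1\cup\dots\cup U_n = X$. Since $V_k \subseteq U_k$, we have $\operatorname{diam}(V_k) \le \operatorname{diam}(U_k) < \varepsilon$. Discarding the empty $V_k$ leaves a finite family of non-empty, pairwise disjoint clopen blocks covering $X$, which is a partition $\calP$ as required. If $X = \emptyset$ the statement is vacuous.

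The only non-routine point is Step 1, namely that total disconnectedness yields small clopen neighbourhoods. If this is not simply cited, I would prove it as follows. Fix $x$ and let $C = \partial B(x,\varepsilon/3)$, or more conveniently the closed set $K = X \setminus B(x,\varepsilon/3)$. For each $y \in K$, since the quasi-component of $x$ is $\{x\}$, there is a clopen set containing $x$ but not $y$. The complements of these sets form a clopen cover of the compact set $K$. Take a finite subcover of $K$ and intersect the corresponding finitely many clopen neighbourhoods of $x$; the result is a clopen set containing $x$ and missing $K$. The remaining ingredient is the equality of quasi-components and components in compact Hausdorff spaces, which is where the real work lies.
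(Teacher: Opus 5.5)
Your proof is correct and follows essentially the same route as the paper: clopen neighbourhoods of diameter less than $\varepsilon$, a finite subcover by compactness, and disjointification by successive set differences. Your extra details fill in points the paper takes for granted. These are the radius $\varepsilon/3$ that forces the strict inequality on diameters, discarding the empty pieces, and the sketch of why total disconnectedness gives a clopen basis in a compact Hausdorff space.
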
 
Indeed, since $X$ admits a basis of clopen sets, for every $x \in X$ we can find a clopen neighborhood $U_x$ of $x$ with diameter less than $\varepsilon$. The family $\{U_x\}_{x \in X}$ is an open cover of $X$, so by compactness we can extract a finite subcover $\{U_{x_1}, \dots, U_{x_n}\}$. By taking successive set differences, we obtain from this finite cover a finite collection of pairwise disjoint clopen sets $\{Z_1, \dots, Z_m\}$ that still covers $X$. Since each $Z_i$ is contained in some $U_{x_j}$, we have $\text{diam}(Z_i) < \varepsilon$ for all $i \in \{1, \dots, m\}$.

This result is the first step toward constructing a combinatorial model for $X$ by means of sequences of finer and finer partitions of $X$. We start by establishing the existence of such sequences.

\begin{lemma}\label{lemma-technical.1}
There always exist \emph{refined sequences of partitions} of $X$, that is, sequences $\{\calP_n\}_{n \geq 0}$ of partitions of $X$ satisfying the following conditions:
\begin{enumerate}
\item[(a)] each $\calP_{n+1}$ is finer than $\calP_n$ for $n \geq 0$;
\item[(b)] the collection $\bigcup_{n \geq 0} \calP_n$ forms a basis for the topology of $X$;
\item[(c)] the maximum diameter $\max\{\operatorname{diam}(Z) \mid Z \in \calP_n\}$ tends to zero as $n \to \infty$.
\end{enumerate}
\end{lemma}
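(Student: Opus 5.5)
The plan is to build the sequence inductively, using Lemma \ref{lemma:first_approx} at each stage and taking common refinements. For two partitions $\calP, \calQ$ of $X$, write $\calP \wedge \calQ := \{P \cap Q \mid P \in \calP,\ Q \in \calQ,\ P \cap Q \neq \emptyset\}$. This is again a finite collection of non-empty, pairwise disjoint clopen sets covering $X$, so it is a partition. It is finer than both $\calP$ and $\calQ$.

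Start with $\calP_0 := \{X\}$. For $n \geq 1$, Lemma \ref{lemma:first_approx} with $\varepsilon = 1/n$ gives a partition $\calQ_n$ all of whose blocks have diameter $< 1/n$. Set $\calP_n := \calP_{n-1} \wedge \calQ_n$. Condition (a) holds by construction. Every block of $\calP_n$ lies in a block of $\calQ_n$, so its diameter is $< 1/n$, and this gives (c).

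The main step is (b), which I would deduce from (c). Let $U \subseteq X$ be open and $x \in U$, and choose $r > 0$ with $B(x,r) \subseteq U$. Take $n$ with $1/n < r$. Since $\calP_n$ covers $X$, there is a unique block $Z \in \calP_n$ with $x \in Z$. For any $y \in Z$ we have $d(x,y) \leq \operatorname{diam}(Z) < 1/n < r$, so $x \in Z \subseteq B(x,r) \subseteq U$. All blocks are open, so $\bigcup_{n\ge 0}\calP_n$ is a family of open sets such that every open set is a union of its members; hence it is a basis. I expect no real obstacle here. The only point to check carefully is that intersecting partitions preserves clopenness and finiteness, which is immediate.
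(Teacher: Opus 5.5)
Your proof is correct. It differs from the paper's proof only in how condition (b) is obtained.

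The paper builds (a) and (c) essentially as you do, by successively wedging partitions whose blocks have diameter $<2^{-n}$. It then secures (b) by a separate step. It fixes a countable clopen basis $\{B_n\}_{n\ge 0}$ and additionally wedges the $n$-th partition with $\{B_n, X\setminus B_n\}$. The block of $\calP_N$ containing $x$ therefore lies inside $B_N\subseteq U$.

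You instead note that (b) already follows from (c). Blocks are open, and the metric induces the topology, so a block containing $x$ of diameter $<r$ lies inside $B(x,r)\subseteq U$.

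Your route is shorter and shows that the paper's basis-intersection step is redundant once diameter control is in place. The paper's route gets (b) without reference to the metric. It also guarantees that each prescribed basic clopen set $B_N$ is a union of blocks of $\calP_N$ at a known stage, though the lemma does not need this.
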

\begin{proof}
For each $n \geq 0$, due to Lemma \ref{lemma:first_approx} we can find a partition $\calP''_n$ such that $\max\{\text{diam}(Z) \mid Z \in \calP''_n\} < 1/2^n$. While the sequence $\{\calP''_n\}_{n \geq 0}$ satisfies condition (c), we have no guarantee that it satisfies the other conditions. For that, we first construct inductively a new sequence of partitions satisfying both (a) and (c).

Set $\calP'_0 := \calP''_0$, and suppose we have constructed $\calP'_0, \dots, \calP'_m$ for some $m \geq 0$. To construct $\calP'_{m+1}$, we consider the collection of all \textit{non-empty} intersections of the form $Z' \cap Z''$, where $Z \in \calP'_m$ and $Z' \in \calP''_{m+1}$. Since both $\calP'_m$ and $\calP''_{m+1}$ are partitions of $X$, these intersections are clopen, pairwise disjoint, and cover the whole space $X$. Therefore, $\calP'_{m+1}$ is a \textit{bona-fide} partition of $X$. By definition, $\calP'_{m+1}$ is finer than both $\calP'_m$ and $\calP''_{m+1}$, so conditions (a) and (c) are satisfied.

To ensure that condition (b) is also satisfied, we construct a final sequence of partitions $\{\calP_n\}_{n \geq 0}$ as follows. Since $X$ is totally disconnected and metrizable, it admits a countable basis of clopen sets $\calB = \{B_n\}_{n \geq 0}$. We define $\calP_0$ as the set of all \textit{non-empty} intersections of the form $Z' \cap B_0$ or $Z' \cap \left(X \setminus B_0\right)$, where $Z' \in \calP'_0$. Inductively, suppose we have constructed $\calP_0, \dots, \calP_m$ for some $m \geq 0$. To construct $\calP_{m+1}$, we consider the collection of all \textit{non-empty} intersections of the form $Z \cap Z' \cap B_{m+1}$ or $Z \cap Z' \cap \left(X \setminus B_{m+1}\right)$, where $Z \in \calP_m$ and $Z' \in \calP'_{m+1}$.

This construction yields the required sequence of partitions $\{\calP_n\}_{n \geq 0}$. Indeed, since each $\calP'_n$ is a partition of $X$, so is $\calP_n$. Furthermore, $\calP_{n+1}$ is finer than both $\calP_n$ and $\calP'_n$ by construction, ensuring conditions (a) and (c). Finally, given any open set $U \subseteq X$ and any point $x \in U$, we can find a a basis element $B_N \in \calB$ such that $x \in B_N \subseteq U$. Since $\calP_N$ is a partition of $X$ constructed by intersecting with $B_N$ and its complement, there exists a block $Z \in \calP_N$ such that $x \in Z \subseteq B_N \subseteq U$. This shows that $\bigcup_{n \geq 0} \calP_n$ forms a basis for the topology of $X$, verifying condition (b).
\end{proof}

The process used in the proof of Lemma \ref{lemma-technical.1} to construct a new partition that refines several given partitions is called \textit{wedging}, which we formalize as follows. Given a finite collection of partitions $\calP_1, \dots, \calP_r$ of $X$, we define its \textit{wedge}, denoted $\calP_1 \wedge \cdots \wedge \calP_r$, to be the partition consisting of all \textit{non-empty} intersections $Z_1 \cap \cdots \cap Z_r$, where $Z_i \in \calP_i$ for all $i \in \{1, \dots, r\}$. It is clear that $\calP_1 \wedge \cdots \wedge \calP_r$ is a partition of $X$ that is finer than all the original partitions $\calP_1, \dots, \calP_r$.\newline

Thus far, we have established the existence of sequences of partitions that approximate the space $X$ at any level of precision; we have inductively sliced $X$ into clopen sets of arbitrarily small diameter such that any open subset of $X$ can be written as a union of a collection of these slices. We now incorporate the homeomorphism $h$ into the picture by requiring our refined sequences of partitions to be dynamically compatible. Specifically, a given partition must refine not only the previous one, but also its translates under $h$ and $h^{-1}$.

\begin{definition}\label{definition:h_refined}
A refined sequence of partitions $\{\calP_n\}_{n \geq 0}$ of $X$ is called \textit{$h$-refined} if it further satisfies the condition
$$\calP_{n+1} \precsim h(\calP_n) \wedge h^{-1}(\calP_n)$$
for all $n \geq 0$.
\end{definition}

We remark that the definition of an $h$-refined sequence given here differs slightly from \cite[Definition 3.18]{AC2024}, where $\calP_{n+1}$ is required to refine either $h(\calP_n)$ or $h^{-1}(\calP_n)$ depending on the parity of $n$. However, this distinction is immaterial for our purposes. By \cite[Theorem 3.28]{AC2024}, the equivalence class of the $h$-diagram constructed from $(X,h)$ is independent of the specific choice of the $h$-refined sequence of partitions. We therefore opt for the symmetric condition presented in Definition \ref{definition:h_refined} purely for the sake of simplicity.

Given a refined sequence of partitions $\{\calP'_n\}_{n \geq 0}$, whose existence is guaranteed by Lemma \ref{lemma-technical.1}, it is straightforward to construct an $h$-refined sequence.

\begin{lemma}\label{lemma-technical.2}
There always exist $h$-refined sequences of partitions of $X$.
\end{lemma}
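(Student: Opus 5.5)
We start from a refined sequence of partitions $\{\calP'_n\}_{n \geq 0}$ given by Lemma \ref{lemma-technical.1}. We then modify it inductively by wedging with the translates of the previous partition, exactly as in the proof of that lemma. Set $\calP_0 := \calP'_0$. Having constructed $\calP_0, \dots, \calP_m$, define
$$\calP_{m+1} := \calP'_{m+1} \wedge \calP_m \wedge h(\calP_m) \wedge h^{-1}(\calP_m).$$
Here $h(\calP_m) = \{h(Z) \mid Z \in \calP_m\}$ and similarly for $h^{-1}$.

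The first point to check is that each $\calP_{m+1}$ is a genuine partition. Since $h$ is a homeomorphism, it maps clopen sets to clopen sets. It also preserves non-emptiness, disjointness and covering, so $h(\calP_m)$ and $h^{-1}(\calP_m)$ are partitions of $X$. Their wedge with $\calP'_{m+1}$ and $\calP_m$ is therefore again a partition, by the remark following Lemma \ref{lemma-technical.1}.

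Next we verify the required properties, all of which follow because a wedge is finer than each of its factors.

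\begin{itemize}
\item[(a)] $\calP_{m+1} \precsim \calP_m$, since $\calP_m$ is one of the factors.
\item[(c)] $\calP_{m+1} \precsim \calP'_{m+1}$, so each block of $\calP_{m+1}$ has diameter at most the maximal diameter in $\calP'_{m+1}$, which tends to zero.
\item[(b)] Given an open set $U$ and a point $x \in U$, choose a block $Z' \in \calP'_N$ with $x \in Z' \subseteq U$. The block of $\calP_N$ containing $x$ is contained in $Z'$ because $\calP_N \precsim \calP'_N$. (For $N=0$ this holds trivially, since $\calP_0 = \calP'_0$.) Hence $\bigcup_n \calP_n$ is a basis.
\item[$h$-refinement.] $\calP_{m+1} \precsim h(\calP_m) \wedge h^{-1}(\calP_m)$, because both translates are factors of the wedge.
\end{itemize}

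No step is a serious obstacle. The only point needing care is that the basis property (b) is inherited from $\{\calP'_n\}$ under refinement; this works because refining a partition only shrinks its blocks.
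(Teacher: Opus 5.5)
Your proposal is correct and matches the paper's proof: the same inductive definition $\calP_{n+1} := \calP'_{n+1} \wedge \calP_n \wedge h(\calP_n) \wedge h^{-1}(\calP_n)$, with (a)--(c) inherited via $\calP_n \precsim \calP'_n$ and $h$-refinement immediate from the wedge. Your checks that $h(\calP_m)$ is a partition and that the basis property (b) transfers are slightly more explicit than the paper's.
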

\begin{proof}
Let $\{\calP'_n\}_{n \geq 0}$ be a refined sequence of partitions as provided by Lemma \ref{lemma-technical.1}. We define a new sequence of partitions inductively by setting:
\begin{align*}
&\calP_0 := \calP'_0;\\
&\calP_{n+1} := \calP'_{n+1} \wedge \calP_n \wedge h(\calP_n) \wedge h^{-1}(\calP_n) \quad \text{ for all } n \geq 0.
\end{align*}
Because $\{\calP'_n\}_{n \geq 0}$ satisfies conditions (b) and (c) of Lemma \ref{lemma-technical.1}, and $\calP_n \precsim \calP'_n$ for all $n \geq 0$, the new sequence $\{\calP_n\}_{n \geq 0}$ inherits all conditions (a), (b) and (c) of Lemma \ref{lemma-technical.1} and is therefore a refining sequence of partitions of $X$. The condition of being $h$-refined follows immediately from the construction of the sequence.
\end{proof}

We now introduce the main examples that will be used to illustrate the construction detailed in Section \ref{section-the.construction}.

\begin{example}[The two-sided shift]\label{example:two_sided_shift}
In this example, we take $X$ to be the Cantor space, that is, the unique (up to homeomorphism) totally disconnected compact metric space without isolated points. The concrete model we will use is the set of two-sided infinite sequences of zeros and ones, namely
$$X = \{x = (x_i)_{i \in \Z} \mid x_i \in \{0,1\} \text{ for all } i \in \Z\} = \{0,1\}^{\Z}.$$
The topology of $X$ is the usual product topology, where $\{0,1\}$ is given the discrete topology. A basis for this topology is given by the so-called cylinder sets: given a finite tuple $\alpha = (\alpha_1, \dots, \alpha_N) \in \{0,1\}^N$ of zero's and one's and $l \in \Z$, we define the cylinder set $[\alpha;l]$ to be the set of all points $x \in X$ such that the finite segment $(x_l, \dots, x_{l+N-1})$ coincides with $\alpha$. A more convenient notation that we will use for cylinder sets is the following. For integers $r,l \geq 0$ and $\epsilon_{-l}, \dots, \epsilon_r \in \{0,1\}$, we set
$$[\epsilon_{-l} \cdots \underline{\epsilon_0} \cdots \epsilon_r] := [(\epsilon_{-l}, \dots, \epsilon_r);-l] = \{x \in X \mid x_i = \epsilon_i \text{ for all } i \in \{-l, \dots, r\}\}.$$
Note that we symbolize the zeroth position of the index set $\Z$ with an underline. For example, the cylinder set $[01\underline{0}]$ consists of those sequences $x \in X$ for which $x_{-2} = 0$, $x_{-1} = 1$, and $x_0 = 0$.

Strictly speaking, even though this new notation also provides a basis for the topology of $X$, it does not exhaust all possible cylinder sets previously defined as $[\alpha;l]$. The reason is that, in this new notation, we are restricted to finite tuples of zeros and ones that always cover the zeroth position. To overcome this, we extend the allowed values to include a wildcard symbol $\ast$ that represents any possible value of $\{0,1\}$. Thus, for instance, the union of the cylinder sets $[0\underline{1}]$ and $[0\underline{0}]$ is precisely the set of those sequences $x \in X$ for which $x_{-1} = 0$, and we denote this by $[0\underline{\ast}]$. Similarly, $[0100{\ast}{\ast}\underline{\ast}]$ represents the set of sequences $x \in X$ for which $x_{-6} = 0$, $x_{-5} = 1$, $x_{-4} = 0$, and $x_{-3} = 0$.

The metric witnessing the metrizability of the topology defined above is given as follows: for two distinct sequences $x, y \in X$, we set
$$d(x,y) = 2^{-N},$$
where $N$ is the largest non-negative integer such that $x_i = y_i$ for all indices $i$ with $|i| < N$. In other words, $N$ represents the first index (in absolute value) where $x$ and $y$ differ, when starting from the zeroth position and comparing symmetrically outwards.

The homeomorphism is defined here to be the classical two-sided shift, namely
$$\sigma \colon X \to X, \quad \sigma(x)_i = x_{i+1}.$$
It is clear that $\sigma$ is indeed a homeomorphism, as it is bijective and maps cylinder sets to cylinder sets. For instance, $\sigma([00\underline{1}0]) = [001\underline{0}]$ and $\sigma^{-1}([00\underline{1}0]) = [0\underline{0}10]$. Furthermore, $\sigma([10\underline{1}])$ yields the set $[101\underline{\ast}]$.

We can now construct a refined sequence of partitions for $X$. We set $\calP_0 = \{X\}$, $\calP_1 =\{[\underline{0}], [\underline{1}]\}$, $\calP_2 = \{[0\underline{0}0], [0\underline{0}1], [0\underline{1}0], [0\underline{1}1], [1\underline{0}0], [1\underline{0}1], [1\underline{1}0], [1\underline{1}1]\}$ and, more generally, for $n \geq 0$,
$$\calP_{n+1} = \{[\epsilon_{-n} \cdots \underline{\epsilon_0} \cdots \epsilon_n] \mid \epsilon_i \in \{0,1\} \text{ for all } i \in \{-n, \dots, n\}\}.$$
Note that for every block $Z \in \calP_n$, with $n \geq 1$, we have $\operatorname{diam}(Z) = 2^{-n}$; hence, the maximum diameter $\max\{\operatorname{diam}(Z) \mid Z \in \calP_n\}$ tends to zero as $n \to \infty$. Moreover, any cylinder set can be written as a finite union of elements from $\bigcup_{n \geq 0} \calP_n$, so the latter forms a basis for the topology. Furthermore, every block $Z \in \calP_{n+1}$ is uniquely contained in a block $Z' \in \calP_n$ by construction. For example, the block $[01\underline{1}01] \in \calP_3$ is uniquely contained in $[1\underline{1}0] \in \calP_2$. In fact, each block $Z \in \calP_n$ splits into four blocks in $\calP_{n+1}$ for $n \geq 1$. Figure \ref{figure-first.example.1} depicts the first four partitions $\calP_0, \calP_1, \calP_2$, and $\calP_3$, illustrating that each partition is a (strict) refinement of the previous one.

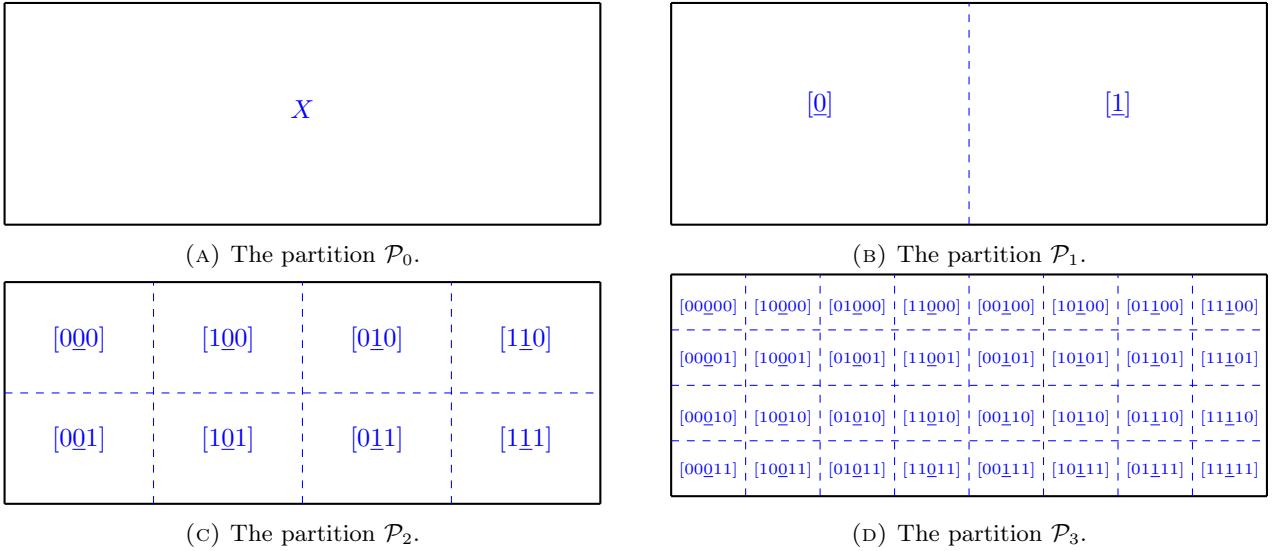
\begin{figure}[htbp]
    \centering
    \begin{subfigure}[b]{0.48\textwidth}
        \centering
        \resizebox{\linewidth}{!}{
            \begin{tikzpicture}
                \draw[black,thick] (-4,0) -- (4,0);
                \draw[black,thick] (-4,0) -- (-4,3);
                \draw[black,thick] (4,0) -- (4,3);
                \draw[black,thick] (-4,3) -- (4,3);
                \node[label=\textcolor{blue}{$X$}] at (0,1.2) {};
            \end{tikzpicture}
        }
        \caption{The partition $\calP_0$.}
    \end{subfigure}
    \hfill
    \begin{subfigure}[b]{0.48\textwidth}
        \centering
        \resizebox{\linewidth}{!}{
            \begin{tikzpicture}
                \draw[black,thick] (-4,0) -- (4,0);
                \draw[black,thick] (-4,0) -- (-4,3);
                \draw[black,thick] (4,0) -- (4,3);
                \draw[black,thick] (-4,3) -- (4,3);
                \draw[blue,dashed] (0,0) -- (0,3);
                \node[label=\textcolor{blue}{$[\underline{0}]$}] at (-2,1.2) {};
                \node[label=\textcolor{blue}{$[\underline{1}]$}] at (2,1.2) {};
            \end{tikzpicture}
        }
        \caption{The partition $\calP_1$.}
    \end{subfigure}
    \begin{subfigure}[b]{0.48\textwidth}
        \centering
        \resizebox{\linewidth}{!}{
            \begin{tikzpicture}
                \draw[black,thick] (-4,0) -- (4,0);
                \draw[black,thick] (-4,0) -- (-4,3);
                \draw[black,thick] (4,0) -- (4,3);
                \draw[black,thick] (-4,3) -- (4,3);
                \draw[blue,dashed] (0,0) -- (0,3);
                \draw[blue,dashed] (-4,1.5) -- (4,1.5);
                \draw[blue,dashed] (-2,0) -- (-2,3);
                \draw[blue,dashed] (2,0) -- (2,3);
                \node[label=\textcolor{blue}{$[0\underline{0}1]$}] at (-3,0.45) {};
                \node[label=\textcolor{blue}{$[0\underline{0}0]$}] at (-3,1.8) {};
                \node[label=\textcolor{blue}{$[1\underline{0}1]$}] at (-1,0.45) {};
                \node[label=\textcolor{blue}{$[1\underline{0}0]$}] at (-1,1.8) {};
                \node[label=\textcolor{blue}{$[0\underline{1}1]$}] at (1,0.45) {};
                \node[label=\textcolor{blue}{$[0\underline{1}0]$}] at (1,1.8) {};
                \node[label=\textcolor{blue}{$[1\underline{1}1]$}] at (3,0.45) {};
                \node[label=\textcolor{blue}{$[1\underline{1}0]$}] at (3,1.8) {};
            \end{tikzpicture}
        }
        \caption{The partition $\calP_2$.}
    \end{subfigure}
    \hfill
    \begin{subfigure}[b]{0.48\textwidth}
        \centering
        \resizebox{\linewidth}{!}{
            \begin{tikzpicture}
                \draw[black,thick] (-4,0) -- (4,0);
                \draw[black,thick] (-4,0) -- (-4,3);
                \draw[black,thick] (4,0) -- (4,3);
                \draw[black,thick] (-4,3) -- (4,3);
                \draw[blue,dashed] (0,0) -- (0,3);
                \draw[blue,dashed] (-4,1.5) -- (4,1.5);
                \draw[blue,dashed] (-2,0) -- (-2,3);
                \draw[blue,dashed] (2,0) -- (2,3);
				\draw[blue,dashed] (-4,0.75) -- (4,0.75);
                \draw[blue,dashed] (-4,2.25) -- (4,2.25);
				\draw[blue,dashed] (-3,0) -- (-3,3);
                \draw[blue,dashed] (-1,0) -- (-1,3);
				\draw[blue,dashed] (1,0) -- (1,3);
                \draw[blue,dashed] (3,0) -- (3,3);
                \node[label=\tiny \textcolor{blue}{$[00\underline{0}00]$}] at (-3.5,2.2) {};
                \node[label=\tiny \textcolor{blue}{$[10\underline{0}00]$}] at (-2.5,2.2) {};
				\node[label=\tiny \textcolor{blue}{$[01\underline{0}00]$}] at (-1.5,2.2) {};
				\node[label=\tiny \textcolor{blue}{$[11\underline{0}00]$}] at (-0.5,2.2) {};
				\node[label=\tiny \textcolor{blue}{$[00\underline{1}00]$}] at (0.5,2.2) {};
				\node[label=\tiny \textcolor{blue}{$[10\underline{1}00]$}] at (1.5,2.2) {};
				\node[label=\tiny \textcolor{blue}{$[01\underline{1}00]$}] at (2.5,2.2) {};
				\node[label=\tiny \textcolor{blue}{$[11\underline{1}00]$}] at (3.5,2.2) {};
				\node[label=\tiny \textcolor{blue}{$[00\underline{0}01]$}] at (-3.5,1.5) {};
                \node[label=\tiny \textcolor{blue}{$[10\underline{0}01]$}] at (-2.5,1.5) {};
				\node[label=\tiny \textcolor{blue}{$[01\underline{0}01]$}] at (-1.5,1.5) {};
				\node[label=\tiny \textcolor{blue}{$[11\underline{0}01]$}] at (-0.5,1.5) {};
				\node[label=\tiny \textcolor{blue}{$[00\underline{1}01]$}] at (0.5,1.5) {};
				\node[label=\tiny \textcolor{blue}{$[10\underline{1}01]$}] at (1.5,1.5) {};
				\node[label=\tiny \textcolor{blue}{$[01\underline{1}01]$}] at (2.5,1.5) {};
				\node[label=\tiny \textcolor{blue}{$[11\underline{1}01]$}] at (3.5,1.5) {};
				\node[label=\tiny \textcolor{blue}{$[00\underline{0}10]$}] at (-3.5,0.7) {};
                \node[label=\tiny \textcolor{blue}{$[10\underline{0}10]$}] at (-2.5,0.7) {};
				\node[label=\tiny \textcolor{blue}{$[01\underline{0}10]$}] at (-1.5,0.7) {};
				\node[label=\tiny \textcolor{blue}{$[11\underline{0}10]$}] at (-0.5,0.7) {};
				\node[label=\tiny \textcolor{blue}{$[00\underline{1}10]$}] at (0.5,0.7) {};
				\node[label=\tiny \textcolor{blue}{$[10\underline{1}10]$}] at (1.5,0.7) {};
				\node[label=\tiny \textcolor{blue}{$[01\underline{1}10]$}] at (2.5,0.7) {};
				\node[label=\tiny \textcolor{blue}{$[11\underline{1}10]$}] at (3.5,0.7) {};
				\node[label=\tiny \textcolor{blue}{$[00\underline{0}11]$}] at (-3.5,0.0) {};
                \node[label=\tiny \textcolor{blue}{$[10\underline{0}11]$}] at (-2.5,0.0) {};
				\node[label=\tiny \textcolor{blue}{$[01\underline{0}11]$}] at (-1.5,0.0) {};
				\node[label=\tiny \textcolor{blue}{$[11\underline{0}11]$}] at (-0.5,0.0) {};
			    \node[label=\tiny \textcolor{blue}{$[00\underline{1}11]$}] at (0.5,0.0) {};
				\node[label=\tiny \textcolor{blue}{$[10\underline{1}11]$}] at (1.5,0.0) {};
				\node[label=\tiny \textcolor{blue}{$[01\underline{1}11]$}] at (2.5,0.0) {};
				\node[label=\tiny \textcolor{blue}{$[11\underline{1}11]$}] at (3.5,0.0) {};
            \end{tikzpicture}
        }
        \caption{The partition $\calP_3$.}
    \end{subfigure}
    \caption{Display of the partitions $\calP_0$ through $\calP_3$.}
	\label{figure-first.example.1}
\end{figure}

The sequence of partitions $\{\calP_n\}_{n \geq 0}$ is also $\sigma$-refined. This is straightforward to verify: for every block $Z = [\epsilon_{-n} \cdots \underline{\epsilon_0} \cdots \epsilon_n] \in \calP_{n+1}$ (with $n \geq 1$), we have:
\begin{equation} \label{equation-two.sided.shift}
\begin{split}
\sigma(Z) = [\epsilon_{-n} \cdots \epsilon_0 \underline{\epsilon_1} \cdots \epsilon_n] \subseteq [\epsilon_{-n+2} \cdots \underline{\epsilon_1} \cdots \epsilon_n] \in \calP_n,\\
\sigma^{-1}(Z) = [\epsilon_{-n} \cdots \underline{\epsilon_{-1}} \epsilon_0 \cdots \epsilon_n] \subseteq [\epsilon_{-n} \cdots \underline{\epsilon_{-1}} \cdots \epsilon_{n-2}] \in \calP_n.
\end{split}
\end{equation}
Thus, we have the refinement relation $\calP_{n+1} \precsim \sigma(\calP_n) \wedge \sigma^{-1}(\calP_n)$. Clearly, it also holds that $\calP_1 \precsim \sigma(\calP_0) \wedge \sigma^{-1}(\calP_0) = \calP_0$.
\end{example}

Staying within the symbolic model of the Cantor space, $X = \{0,1\}^\Z$, we can consider an entirely different topological action.

\begin{example}[The bit-wise NOT map]\label{example:bitwise_NOT}
We consider here the homeomorphism
$$\tau \colon X \to X, \quad \tau(x)_i = 1-x_i,$$
which acts by flipping every $0$ to $1$ and vice versa, coordinate-wise. Note that $\tau^2 = \mathrm{id}_X$; consequently, $\tau$ defines an action of the finite group of two elements, $\Z_2$, on $X$ by homeomorphisms.

The same sequence of partitions $\{\calP_n\}_{n \geq 0}$ from Example \ref{example:two_sided_shift} also serves as a $\tau$-refined sequence of partitions. This is because the map $\tau$ preserves both the length and position of the cylinder sets, merely permuting the elements within each partition level. Thus, we have $\tau(\calP_n) = \tau^{-1}(\calP_n) = \calP_n$.
\end{example}

While the previous examples operated on two-sided infinite sequences, restricting our index set to the non-negative integers $\N_0 := \N \cup \{0\}$ yields another fundamental dynamical system.

\begin{example}[The odometer map on the Cantor space]\label{example:odometer_Cantor}
In this example, we shall consider the classical odometer map on the Cantor space. For ease of notation, we take as a model for $X$ the set of one-sided infinite sequences of zeros and ones, namely
$$X = \{x = (x_i)_{i \in \N_0} \mid x_i \in \{0,1\} \text{ for all } i \in \N_0\} = \{0,1\}^{\N_0}.$$
The topology of $X$ is again the usual product topology, where $\{0,1\}$ is given the discrete topology. A basis for the topology is again given by the cylinder sets, which are now represented by tuples $(\epsilon_0, \dots, \epsilon_r) \in \{0,1\}^{r+1}$ for $r \geq 0$, namely $[\underline{\epsilon_0} \cdots \epsilon_r]$ (thus $l = 0$ if we follow the notation from Example \ref{example:two_sided_shift}). The metric witnessing the metrizability of the topology is defined similarly as before: for two distinct sequences $x, y \in X$, we set $d(x,y) = 2^{-N}$, where $N$ is the largest non-negative integer such that $x_i = y_i$ for all indices $i$ with $0 \leq i < N$.

The homeomorphism is defined as the map $\operatorname{ad} \colon X \to X$ given by
$$\operatorname{ad}(x) = \begin{cases} (1,x_1,x_2,\dots) & \text{if } x_0 = 0, \\ (0,\stackrel{n+1}{\dots},0,1,x_{n+2},\dots) & \text{if } x_i = 1 \text{ for all } i \in \{0,\dots,n\} \text{ and } x_{n+1} = 0, \\ (0,0,\dots) & \text{if } x_i = 1 \text{ for all } i \in \N_0. \end{cases}$$
This map is often called the \textit{adding machine} or \textit{odometer} map, as it can be interpreted as adding one to a binary number represented by the sequence $x$, with carry-over. It is straightforward to verify that $\operatorname{ad}$ is indeed a homeomorphism.

The sequence of partitions $\{\calP_n\}_{n \geq 0}$ defined by $\calP_0 = \{X\}$ and by
$$\calP_{n+1} = \{[\underline{\epsilon_0} \cdots \epsilon_n] \mid \epsilon_i \in \{0,1\} \text{ for all } i \in \{0,\dots,n\}\}$$
for $n \geq 0$, is $\operatorname{ad}$-refined. The reasoning is the same as in Example \ref{example:bitwise_NOT}: the map $\operatorname{ad}$ preserves the length and position of the cylinder sets, merely permuting the elements within each partition level $\calP_n$. For instance,
$$\operatorname{ad}([\underline{0}0]) = [\underline{1}0], \quad \operatorname{ad}([\underline{1}0]) = [\underline{0}1], \quad \operatorname{ad}([\underline{0}1]) = [\underline{1}1] \quad \text{ and } \quad \operatorname{ad}([\underline{1}1]) = [\underline{0}0].$$
\end{example}

While our previous examples focused on Cantor dynamics, the combinatorial encoding also applies to spaces containing isolated points. We conclude this section with such a case.

\begin{example}[The shift map on the one-point compactification of the integers]\label{example:compactified_odometer}
Our last example is defined on a totally disconnected compact metric space containing \emph{infinitely many isolated points}. This example was already considered in \cite{HPS1992} as an example of an essentially minimal dynamical system (we defer the formal definition of essential minimality to Section \ref{subsection:ess_min}, see Definition \ref{definition:ess_min}).

Let $X = \Z^{\ast} := \Z \cup \{\infty\}$ be the one-point compactification of the integers $\Z$. A basis for a topology on $X$ consists of all singletons $\{n\}$ for $n \in \Z$, together with the sets of the form $V_F := (\Z \setminus F) \cup  \{\infty\}$, where $F$ is any finite subset of $\Z$. For notational convenience, we will write $V_n := V_{\{-n,\dots,n\}}$.

Since this basis consists entirely of clopen sets, $X$ is indeed a totally disconnected space. A metric inducing this topology is given by
$$d(n,m) = \frac{|n-m|}{(1+|n|)(1+|m|)} \quad \text{ for } n,m \in \Z, \qquad d(n,\infty) = \frac{1}{1+|n|} \quad \text{ for } n \in \Z.$$
Figure \ref{figure-example.4} provides a visualization of the space $X$.

\begin{figure}[htbp]
    \centering
    \begin{tikzpicture}[scale=3]
        \def\R{0.5} 
        \draw[gray!90, thin] (0,0) circle (\R);
        \node[above=0.8pt, red, font=\bfseries] at (90:\R) {$\infty$};
        \fill[red] (90:\R) circle (0.8pt);
        \fill[blue] (-90:\R) circle (0.5pt);
        \node[font=\scriptsize] at (-90:{\R*(1+0.1)}) {$0$};
        \foreach \n in {9, 10, 11, 12, 13, 14, 15, 16, 17, 18, 19, 20, 21, 22, 23, 24, 25, 26, 27, 30, 32, 34, 36, 38, 40, 42, 45, 50, 60} {
            \fill[blue!60] ({-90+180*(\n)/(\n+2)}:\R) circle (0.5pt);
            \fill[blue!60] ({-90-180*(\n)/(\n+2)}:\R) circle (0.5pt);
        }
        \foreach \n in {1, 2, 3, 4, 5, 6, 7} {
            \fill[blue] ({-90+180*(4*\n)/(4*\n+7)}:\R) circle (0.5pt);
            \node[font=\scriptsize] at ({-90+180*(4*\n)/(4*\n+7)}:{\R*(1+0.15)}) {$\n$};
            \fill[blue] ({-90-180*(4*\n)/(4*\n+7)}:\R) circle (0.5pt);
            \node[font=\scriptsize] at ({-90-180*(4*\n)/(4*\n+7)}:{\R*(1+0.2)}) {-$\n$};
        }
        \node[font=\small, gray, align=center] at (0,0) {$X$};
    \end{tikzpicture}
    \caption{The space $X = \Z^{\ast}$. The integers are mapped onto the circle such that they accumulate at the north pole, representing the point $\infty$.}
    \label{figure-example.4}
\end{figure}
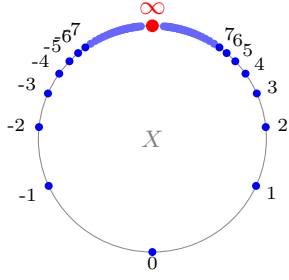

We take $\operatorname{add} \colon X \to X$ to be the homeomorphism defined by
$$\operatorname{add}(n) = n+1 \quad \text{ for } n \in \Z, \qquad \operatorname{add}(\infty) = \infty.$$

We construct a refined sequence of partitions for $X$ as follows. We set $\calP_0 = \{X\}$ and, for $n \geq 0$,
$$\calP_{n+1} = \{\{j\} \mid j \in \{-n,\dots,n\}\} \cup \{V_n\}.$$
Figure \ref{fig:partitions2x3} displays the first six partitions $\calP_0$ through $\calP_5$.
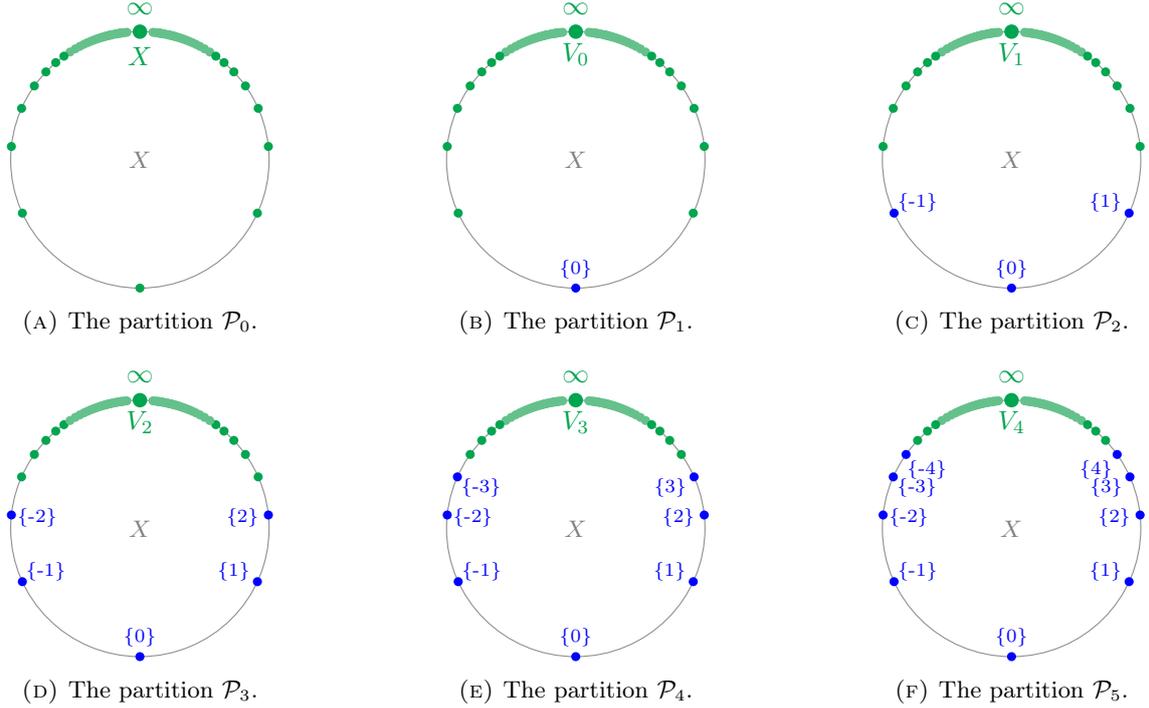
\begin{figure}[htbp]
    \centering
    \begin{subfigure}[b]{0.32\textwidth}
        \centering
        \begin{tikzpicture}[scale=3.4]
         \def\R{0.5} 
         \draw[gray!90, thin] (0,0) circle (\R);
         \node[font=\scriptsize, above=0.8pt, Green, font=\bfseries] at (90:{\R*(1-0.35)}) {$X$};
         \fill[Green] (90:\R) circle (0.8pt);
         \fill[Green] (-90:\R) circle (0.5pt);
         \foreach \n in {9, 10, 11, 12, 13, 14, 15, 16, 17, 18, 19, 20, 21, 22, 23, 24, 25, 26, 27, 30, 32, 34, 36, 38, 40, 42, 45, 50, 60} {
             \fill[Green!60] ({-90+180*(\n)/(\n+2)}:\R) circle (0.5pt);
             \fill[Green!60] ({-90-180*(\n)/(\n+2)}:\R) circle (0.5pt);
         }
         \foreach \n in {1, 2, 3, 4, 5, 6, 7} {
             \fill[Green] ({-90+180*(4*\n)/(4*\n+7)}:\R) circle (0.5pt);
             \fill[Green] ({-90-180*(4*\n)/(4*\n+7)}:\R) circle (0.5pt);
         }
         \node[font=\small, gray, align=center] at (0,0) {$X$};
         \node[Green, align=center] at (0,0.59) {$\infty$};
        \end{tikzpicture}
        \caption{The partition $\calP_0$.}
    \end{subfigure}
    \hfill
    \begin{subfigure}[b]{0.32\textwidth}
        \centering
        \begin{tikzpicture}[scale=3.4]
         \def\R{0.5} 
         \draw[gray!90, thin] (0,0) circle (\R);
         \node[font=\scriptsize, above=0.8pt, Green, font=\bfseries] at (90:{\R*(1-0.35)}) {$V_0$};
         \fill[Green] (90:\R) circle (0.8pt);
         \fill[blue] (-90:\R) circle (0.5pt);
         \node[font=\scriptsize, blue] at (-90:{\R*(1-0.15)}) {$\{0\}$};
         \foreach \n in {9, 10, 11, 12, 13, 14, 15, 16, 17, 18, 19, 20, 21, 22, 23, 24, 25, 26, 27, 30, 32, 34, 36, 38, 40, 42, 45, 50, 60} {
             \fill[Green!60] ({-90+180*(\n)/(\n+2)}:\R) circle (0.5pt);
             \fill[Green!60] ({-90-180*(\n)/(\n+2)}:\R) circle (0.5pt);
         }
         \foreach \n in {1, 2, 3, 4, 5, 6, 7} {
             \fill[Green] ({-90+180*(4*\n)/(4*\n+7)}:\R) circle (0.5pt);
             \fill[Green] ({-90-180*(4*\n)/(4*\n+7)}:\R) circle (0.5pt);
         }
         \node[font=\small, gray, align=center] at (0,0) {$X$};
         \node[Green, align=center] at (0,0.59) {$\infty$};
        \end{tikzpicture}
        \caption{The partition $\calP_1$.}
    \end{subfigure}
    \hfill
    \begin{subfigure}[b]{0.32\textwidth}
        \centering
        \begin{tikzpicture}[scale=3.4]
         \def\R{0.5} 
         \draw[gray!90, thin] (0,0) circle (\R);
         \node[font=\scriptsize, above=0.8pt, Green, font=\bfseries] at (90:{\R*(1-0.35)}) {$V_1$};
         \fill[Green] (90:\R) circle (0.8pt);
         \fill[blue] (-90:\R) circle (0.5pt);
         \node[font=\scriptsize, blue] at (-90:{\R*(1-0.15)}) {$\{0\}$};
         \foreach \n in {9, 10, 11, 12, 13, 14, 15, 16, 17, 18, 19, 20, 21, 22, 23, 24, 25, 26, 27, 30, 32, 34, 36, 38, 40, 42, 45, 50, 60} {
             \fill[Green!60] ({-90+180*(\n)/(\n+2)}:\R) circle (0.5pt);
             \fill[Green!60] ({-90-180*(\n)/(\n+2)}:\R) circle (0.5pt);
         }
         \foreach \n in {1} {
             \fill[blue] ({-90+180*(4*\n)/(4*\n+7)}:\R) circle (0.5pt);
             \node[font=\scriptsize, blue] at ({-90+180*(4*\n)/(4*\n+7)}:{\R*(1-0.2)}) {$\{\n\}$};
             \fill[blue] ({-90-180*(4*\n)/(4*\n+7)}:\R) circle (0.5pt);
             \node[font=\scriptsize, blue] at ({-90-180*(4*\n)/(4*\n+7)}:{\R*(1-0.2)}) {$\{$-$\n\}$};
         }
         \foreach \n in {2, 3, 4, 5, 6, 7} {
             \fill[Green] ({-90+180*(4*\n)/(4*\n+7)}:\R) circle (0.5pt);
             \fill[Green] ({-90-180*(4*\n)/(4*\n+7)}:\R) circle (0.5pt);
         }
         \node[font=\small, gray, align=center] at (0,0) {$X$};
         \node[Green, align=center] at (0,0.59) {$\infty$};
        \end{tikzpicture}
        \caption{The partition $\calP_2$.}
    \end{subfigure}
    \par\bigskip
    \begin{subfigure}[b]{0.32\textwidth}
        \centering
        \begin{tikzpicture}[scale=3.4]
         \def\R{0.5} 
         \draw[gray!90, thin] (0,0) circle (\R);
         \node[font=\scriptsize, above=0.8pt, Green, font=\bfseries] at (90:{\R*(1-0.35)}) {$V_2$};
         \fill[Green] (90:\R) circle (0.8pt);
         \fill[blue] (-90:\R) circle (0.5pt);
         \node[font=\scriptsize, blue] at (-90:{\R*(1-0.15)}) {$\{0\}$};
         \foreach \n in {1, 2} {
             \fill[blue] ({-90+180*(4*\n)/(4*\n+7)}:\R) circle (0.5pt);
             \node[font=\scriptsize, blue] at ({-90+180*(4*\n)/(4*\n+7)}:{\R*(1-0.2)}) {\{$\n\}$};
             \fill[blue] ({-90-180*(4*\n)/(4*\n+7)}:\R) circle (0.5pt);
             \node[font=\scriptsize, blue] at ({-90-180*(4*\n)/(4*\n+7)}:{\R*(1-0.2)}) {$\{$-$\n\}$};
         }
         \foreach \n in {3, 4, 5, 6, 7} {
             \fill[Green] ({-90+180*(4*\n)/(4*\n+7)}:\R) circle (0.5pt);
             \fill[Green] ({-90-180*(4*\n)/(4*\n+7)}:\R) circle (0.5pt);
         }
         \foreach \n in {9, 10, 11, 12, 13, 14, 15, 16, 17, 18, 19, 20, 21, 22, 23, 24, 25, 26, 27, 30, 32, 34, 36, 38, 40, 42, 45, 50, 60} {
             \fill[Green!60] ({-90+180*(\n)/(\n+2)}:\R) circle (0.5pt);
             \fill[Green!60] ({-90-180*(\n)/(\n+2)}:\R) circle (0.5pt);
         }
         \node[font=\small, gray, align=center] at (0,0) {$X$};
         \node[Green, align=center] at (0,0.59) {$\infty$};
        \end{tikzpicture}
        \caption{The partition $\calP_3$.}
    \end{subfigure}
    \hfill
    \begin{subfigure}[b]{0.32\textwidth}
        \centering
        \begin{tikzpicture}[scale=3.4]
         \def\R{0.5} 
         \draw[gray!90, thin] (0,0) circle (\R);
         \node[font=\scriptsize, above=0.8pt, Green, font=\bfseries] at (90:{\R*(1-0.35)}) {$V_3$};
         \fill[Green] (90:\R) circle (0.8pt);
         \fill[blue] (-90:\R) circle (0.5pt);
         \node[font=\scriptsize, blue] at (-90:{\R*(1-0.15)}) {$\{0\}$};
         \foreach \n in {9, 10, 11, 12, 13, 14, 15, 16, 17, 18, 19, 20, 21, 22, 23, 24, 25, 26, 27, 30, 32, 34, 36, 38, 40, 42, 45, 50, 60} {
             \fill[Green!60] ({-90+180*(\n)/(\n+2)}:\R) circle (0.5pt);
             \fill[Green!60] ({-90-180*(\n)/(\n+2)}:\R) circle (0.5pt);
         }
         \foreach \n in {1, 2, 3} {
             \fill[blue] ({-90+180*(4*\n)/(4*\n+7)}:\R) circle (0.5pt);
             \node[font=\scriptsize, blue] at ({-90+180*(4*\n)/(4*\n+7)}:{\R*(1-0.2)}) {$\{\n\}$};
             \fill[blue] ({-90-180*(4*\n)/(4*\n+7)}:\R) circle (0.5pt);
             \node[font=\scriptsize, blue] at ({-90-180*(4*\n)/(4*\n+7)}:{\R*(1-0.2)}) {$\{$-$\n\}$};
         }
         \foreach \n in {4, 5, 6, 7} {
             \fill[Green] ({-90+180*(4*\n)/(4*\n+7)}:\R) circle (0.5pt);
             \fill[Green] ({-90-180*(4*\n)/(4*\n+7)}:\R) circle (0.5pt);
         }
         \node[font=\small, gray, align=center] at (0,0) {$X$};
         \node[Green, align=center] at (0,0.59) {$\infty$};
        \end{tikzpicture}
        \caption{The partition $\calP_4$.}
    \end{subfigure}
    \hfill
    \begin{subfigure}[b]{0.32\textwidth}
        \centering
        \begin{tikzpicture}[scale=3.4]
         \def\R{0.5} 
         \draw[gray!90, thin] (0,0) circle (\R);
         \node[font=\scriptsize, above=0.8pt, Green, font=\bfseries] at (90:{\R*(1-0.35)}) {$V_4$};
         \fill[Green] (90:\R) circle (0.8pt);
         \fill[blue] (-90:\R) circle (0.5pt);
         \node[font=\scriptsize, blue] at (-90:{\R*(1-0.15)}) {$\{0\}$};
         \foreach \n in {9, 10, 11, 12, 13, 14, 15, 16, 17, 18, 19, 20, 21, 22, 23, 24, 25, 26, 27, 30, 32, 34, 36, 38, 40, 42, 45, 50, 60} {
             \fill[Green!60] ({-90+180*(\n)/(\n+2)}:\R) circle (0.5pt);
             \fill[Green!60] ({-90-180*(\n)/(\n+2)}:\R) circle (0.5pt);
         }
         \foreach \n in {1, 2, 3, 4} {
             \fill[blue] ({-90+180*(4*\n)/(4*\n+7)}:\R) circle (0.5pt);
             \node[font=\scriptsize, blue] at ({-90+180*(4*\n)/(4*\n+7)}:{\R*(1-0.2)}) {$\{\n\}$};
             \fill[blue] ({-90-180*(4*\n)/(4*\n+7)}:\R) circle (0.5pt);
             \node[font=\scriptsize, blue] at ({-90-180*(4*\n)/(4*\n+7)}:{\R*(1-0.2)}) {$\{$-$\n\}$};
         }
         \foreach \n in {5, 6, 7} {
             \fill[Green] ({-90+180*(4*\n)/(4*\n+7)}:\R) circle (0.5pt);
             \fill[Green] ({-90-180*(4*\n)/(4*\n+7)}:\R) circle (0.5pt);
         }
         \node[font=\small, gray, align=center] at (0,0) {$X$};
         \node[Green, align=center] at (0,0.59) {$\infty$};
        \end{tikzpicture}
        \caption{The partition $\calP_5$.}
    \end{subfigure}
    \caption{The first six partitions $\calP_0$ through $\calP_5$.}
    \label{fig:partitions2x3}
\end{figure}

That this sequence satisfies conditions (a) and (b) from Lemma \ref{lemma-technical.1} is straightforward. For condition (c), note that $\operatorname{diam}(\{j\}) = 0$ for any $j \in \Z$, and that the maximum distance between points in $V_n$ occurs between $n+1$ and $-n-1$. Thus,
$$\operatorname{diam}(V_n) = d(n+1,-n-1) = \frac{2n+2}{(2+n)^2},$$
which tends to zero as $n \to \infty$.

By construction, the sequence $\{\calP_n\}_{n \geq 0}$ is also $\operatorname{add}$-refined. Indeed, we have
$$\operatorname{add}(\calP_n) = \{\{j\} \mid j \in \{-n+2, \dots, n\}\} \cup \{ V_{\{-n+2, \dots, n\}}\}.$$
Observe that the sets $\{-n\},\{-n+1\}$ and $V_n$ are all contained in $V_{\{-n+2, \dots, n\}}$. Thus, $\calP_{n+1} \precsim \operatorname{add}(\calP_n)$ for $n \geq 1$. Similarly, $\calP_{n+1} \precsim \operatorname{add}^{-1}(\calP_n)$. Finally, since $\operatorname{add}(\calP_0) = \operatorname{add}^{-1}(\calP_0) = \calP_0$, the refinement property $\calP_{n+1} \precsim \operatorname{add}(\calP_n) \wedge \operatorname{add}^{-1}(\calP_n)$ readily follows for all $n \geq 0$.
\end{example}

\subsection{Separated Bratteli diagrams}\label{subsection-separated.Bratteli.diagrams}

As outlined in Section \ref{subsection-dynamical.systems.partitions}, the central idea developed in \cite{AC2024} is to approximate the space $X$, as well as the homeomorphism $h$, by means of Bratteli-like \textit{separated} graphs. The core distinction between these and classical Bratteli diagrams is that the edges of separated graphs are effectively partitioned into different ``colors''. This allows for much richer combinatorial structures. For instance, we will use these graphs and their colors to decouple the topological space $X$ from the action of the homeomorphism $h$, while preserving their internal interplay. We primarily employ two colors: blue and red. The blue edges encode the space $X$, while the red edges represent the dynamics induced by $h$.

Before presenting the main construction in Section \ref{section-the.construction}, we devote this section to formalizing the concept of separated (Bratteli) graphs. Our exposition follows \cite{AC2024}, \cite{AE2014}, \cite{AG2012}.

Let $E = (E^0,E^1,r,s)$ be a directed graph, where $E^0$ denotes the set of vertices, $E^1$ the set of edges, and $r, s \colon E^1 \to E^0$ are the range and source maps, respectively. Since all graphs appearing in this paper will be directed, we shall henceforth drop the adjective and refer to them simply as graphs.

\begin{notation}
Let us clarify some usual graph-theoretic terminology. By a \textit{path} in $E$, be mean a directed sequence of edges $\mu := g_0 \, g_1 \, \cdots \, g_n$ (for $n \geq 0$) such that $g_i \in E^1$ and $s(g_i) = r(g_{i+1})$ for all $i$. The \textit{length} of the path is $|\mu| := n+1$. Vertices $v \in E^0$ are regarded as paths of length zero. Paths of length $n \in \N_0$ will often be referred to as $n$-paths.

We are also interested in \textit{infinite} paths. These are represented by an infinite directed sequence of edges $\gamma := g_0 \, g_1 \, \cdots$ such that, for each $n \geq 0$, the truncation $g_0 \, g_1 \, \cdots \, g_n$ is a path of length $n+1$.

The set of finite paths in $E$ is denoted by $\operatorname{Path}_{\text{fin}}(E^1)$, and the set of infinite paths by $\operatorname{Path}_{\infty}(E^1)$.

The range and source maps extend naturally to $\operatorname{Path}_{\text{fin}}(E^1)$ via the following rules: $r(v) := v$ and $s(v) := v$ whenever $v$ is a vertex; for a path $\mu = g_0 \, g_1 \, \cdots \, g_n$ of length $n+1 \geq 1$, we set $r(\mu) := r(g_0)$ and $s(\mu) := s(g_n)$. The range map further extends to $\operatorname{Path}_{\infty}(E^1)$ by setting $r(\gamma) := r(g_0)$ for $\gamma = g_0 \, g_1 \, \cdots$.
\end{notation}

We now introduce the central object of our construction. It is a generalization of a directed graph that can be viewed as an edge-labeled graph, although we will not strictly adopt that perspective here.

\begin{definition}
A \textit{separated graph} is a pair $(E,C)$ consisting of a graph $E$ and a partition $C$ of $E^1$ that decomposes as $C = \bigsqcup_{v \in E^0} C_v$, where each $C_v$ is a partition of the set of incoming edges $r^{-1}(v)$. The set $C$ is called the \textit{separation}, and $C_v$ is the \textit{separation at} $v \in E^0$.
\end{definition}

Note that this convention differs from the one following in, e.g., \cite{AG2011,AG2012}, where partitions of \textit{outgoing edges} ($s^{-1}(v)$) are considered instead.

\begin{examples}\label{examples-separated.graphs}
\text{ }

\begin{enumerate}
\item[1.] Our first example is the motivating graph that led to the theory of separated graphs (see, for instance, \cite{AG2011,AG2012}, \cite{AEK2013}). For two fixed integers $1 \leq m \leq n$, we define a separated graph $(E(m,n),C(m,n))$ as follows. We set $E(m,n)^0 = \{v,w\}$, $E(m,n)^1 = \{\alpha_1,\dots,\alpha_n,\beta_1,\dots,\beta_m\}$ and $C(m,n) = C(m,n)_v = \{R,B\}$, where $R = \{\alpha_1,\dots,\alpha_n\}$ and $B = \{\beta_1,\dots,\beta_m\}$. All edges have range $v$ and source $w$. Here, we partition the set $r^{-1}(v)$ into two blocks, denoted by $R$ (for \textit{red}) and $B$ (for \textit{blue}). We use these colors to visually represent the separation, see for instance Figure \ref{figure-separated.graph.m.n}.

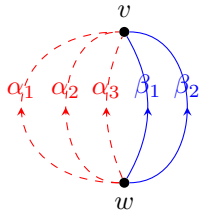
\begin{figure}[H]
    \begin{tikzpicture}
        \node[label=below:$w$,circle,fill=black,scale=0.4] (W) at (0,0) {};
        \node[label=above:$v$,circle,fill=black,scale=0.4] (V) at (0,2) {};
        \draw[dashed,red,postaction={on each segment={mid arrow=red}}] (W) to [out=180,in=180, bend left = 90, looseness=2.2] node[above] {$\alpha_1$} (V);
        \draw[dashed,red,postaction={on each segment={mid arrow=red}}] (W) to [out=90+22.5+22.5,in=180+22.5+22.5, bend left = 90, looseness=1.2] node[above] {$\alpha_2$} (V);
        \draw[dashed,red,postaction={on each segment={mid arrow=red}}] (W) to [out=90+22.5,in=180+22.5+22.5+22.5] node[above] {$\alpha_3$} (V);
        \draw[-,blue,postaction={on each segment={mid arrow=blue}}] (W) to [out=30,in=360-30, bend right = 90, looseness=1.3] node[above] {$\beta_2$} (V);
        \draw[-,blue,postaction={on each segment={mid arrow=blue}}] (W) to [out=60,in=360-60] node[above] {$\beta_1$} (V);
    \end{tikzpicture}
    \caption{The separated graph $(E(2,3),C(2,3))$. The blue edges are solid, while the red edges are dashed.}
    \label{figure-separated.graph.m.n}
\end{figure}

\item[2.] Consider the separated graph $(E,C)$ depicted in Figure \ref{figure-separated.graph.2}. The separation $C = C_{w_1} \sqcup C_{w_2} \sqcup C_{w_3} \sqcup C_{v_1} \sqcup C_{v_2}$ is defined as follows. Since both $w_1$ and $w_2$ are sources (they have no incoming edges), we have $C_{w_1} = C_{w_2} = \emptyset$. For $w_3$, we partition $r^{-1}(w_3)$ into three blocks corresponding to the depicted colors: one block contains the single red edge, another contains the two blue edges, and the third contains the two green edges. These three blocks constitute the partition $C_{w_3}$. Similarly, $C_{v_1}$ consists of two blocks: one block containing the single blue edge, and the other containing the three red edges. Finally, $C_{v_2}$ consists of a single block containing its unique red incoming edge.

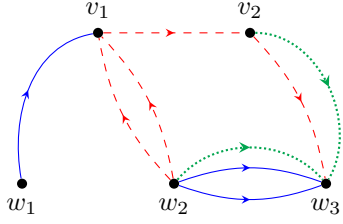
\begin{figure}[H]
    \begin{tikzpicture}
	    \node[circle,fill=black,scale=0.4,label=below:$w_1$] (A1) at (-2,0) {};
	    \node[circle,fill=black,scale=0.4,label=below:$w_2$] (A2) at (0,0) {};
	    \node[circle,fill=black,scale=0.4,label=below:$w_3$] (A3) at (2,0) {};
	    \node[circle,fill=black,scale=0.4,label=above:$v_1$] (B1) at (-1,2) {};
	    \node[circle,fill=black,scale=0.4,label=above:$v_2$] (B2) at (1,2) {};
	    \draw[-,blue,postaction={on each segment={mid arrow=blue}}] (A1) to [out=90+10,in=180+10] (B1);
	    \draw[-,blue,postaction={on each segment={mid arrow=blue}}] (A2) to [out=20,in=180-20] (A3);
	    \draw[-,blue,postaction={on each segment={mid arrow=blue}}] (A2) to [out=360-20,in=180+20] (A3);
	    \draw[dashed,red,postaction={on each segment={mid arrow=red}}] (A2) to [out=180-63.435-16.565,in=360-63.435+16.565] (B1);
	    \draw[dashed,red,postaction={on each segment={mid arrow=red}}] (A2) to [out=180-63.435+16.565,in=270+10] (B1);
	    \draw[dashed,red,postaction={on each segment={mid arrow=red}}] (B2) to [out=360-63.435+16.565,in=180-63.435-16.565] (A3);
        \draw[dashed,red,postaction={on each segment={mid arrow=red}}] (B1) to (B2);
	    \draw[densely dotted,thick,Green,postaction={on each segment={mid arrow=Green}}] (B2) to [out=0,in=60] (A3);
        \draw[densely dotted,thick,Green,postaction={on each segment={mid arrow=Green}}] (A2) to [out=50,in=180-50] (A3);
    \end{tikzpicture}
    \caption{The separated graph $(E,C)$. The blue edges are solid, the red edges are dashed and the green edges are dotted.}
    \label{figure-separated.graph.2}
\end{figure}
\end{enumerate}
\end{examples}

Note in Example \ref{examples-separated.graphs}.2 that although we color certain incoming edges at $v_1$, $v_2$, and $v_3$ red, these edges belong to distinct partitions because they have different ranges. Thus, the use of colors in the diagrams is well-defined and should not lead to confusion. One can think of a separated graph as a graph equipped with a \textit{local} coloring on each set $r^{-1}(v)$.

In this paper, a \textit{bipartite separated graph} is a separated graph $(E,C)$ whose vertex set can be partitioned as $E^0 = E^{0,0} \sqcup E^{0,1}$, with $E^{0,0}, E^{0,1} \neq \emptyset$, such that all edges go from $E^{0,1}$ to $E^{0,0}$, namely $s(E^1) = E^{0,1}$ and  $r(E^1) = E^{0,0}$. Under this definition, the graph from Example \ref{examples-separated.graphs}.1 is bipartite, whereas the graph from Example \ref{examples-separated.graphs}.2 is not.

We now describe the specific type of separated graphs that will appear in our construction.

\begin{definition}\label{definition:separated_Bratteli}\cite[Definition 2.8]{AL2018}
A \textit{separated Bratteli graph} is an infinite separated graph $(F,D)$ satisfying the following properties:
\begin{enumerate}
\item[(a)] The vertex set $F^0$ is the union of finite, non-empty, pairwise disjoint sets $F^{0,j}$, $j \ge 0$.
\item[(b)] The edge set $F^1$ is the union of finite, non-empty, pairwise disjoint sets $F^{1,j}$, $j\ge 0$.
\item[(c)] The range and source maps satisfy $r(F^{1,j})=F^{0,j}$ and $s(F^{1,j})=F^{0,j+1}$ for all $j\ge 0$, respectively.
\end{enumerate}
\end{definition}

Observe that each subgraph $(F^{0,j} \sqcup F^{0,j+1},F^{1,j},r,s)$ is bipartite. Thus, a separated Bratteli diagram is simply a classical Bratteli diagram equipped with a separation $D$. An example of the first three levels of a separated Bratteli diagram is shown in Figure \ref{figure-separated.Bratteli.diagram}.

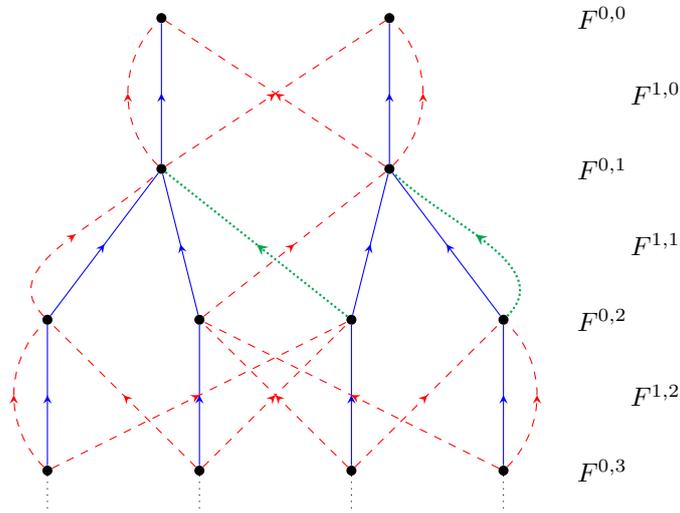
\begin{figure}[H]
\begin{tikzpicture}
	\node[circle,fill=black,scale=0.4] (A1) at (-1.5,3) {};
	\node[circle,fill=black,scale=0.4] (A2) at (1.5,3) {};
	\node[circle,fill=black,scale=0.4] (B1) at (-1.5,1) {};
	\node[circle,fill=black,scale=0.4] (B2) at (1.5,1) {};
	\node[circle,fill=black,scale=0.4] (C1) at (-3,-1) {};
	\node[circle,fill=black,scale=0.4] (C2) at (-1,-1) {};
	\node[circle,fill=black,scale=0.4] (C3) at (1,-1) {};
	\node[circle,fill=black,scale=0.4] (C4) at (3,-1) {};
	\node[circle,fill=black,scale=0.4] (D1) at (-3,-3) {};
	\node[circle,fill=black,scale=0.4] (D2) at (-1,-3) {};
	\node[circle,fill=black,scale=0.4] (D3) at (1,-3) {};
	\node[circle,fill=black,scale=0.4] (D4) at (3,-3) {};
	\draw[-,blue,postaction={on each segment={mid arrow=blue}}] (B1) to (A1);
	\draw[-,blue,postaction={on each segment={mid arrow=blue}}] (B2) to (A2);
	\draw[-,blue,postaction={on each segment={mid arrow=blue}}] (C1) to (B1);
	\draw[-,blue,postaction={on each segment={mid arrow=blue}}] (C2) to (B1);
	\draw[-,blue,postaction={on each segment={mid arrow=blue}}] (C3) to (B2);
	\draw[-,blue,postaction={on each segment={mid arrow=blue}}] (C4) to (B2);
	\draw[-,blue,postaction={on each segment={mid arrow=blue}}] (D1) to (C1);
	\draw[-,blue,postaction={on each segment={mid arrow=blue}}] (D2) to (C2);
	\draw[-,blue,postaction={on each segment={mid arrow=blue}}] (D3) to (C3);
	\draw[-,blue,postaction={on each segment={mid arrow=blue}}] (D4) to (C4);
	\draw[dashed,red,postaction={on each segment={mid arrow=red}}] (B1) to [out=135,in=225] (A1);
	\draw[dashed,red,postaction={on each segment={mid arrow=red}}] (B1) to (A2);
	\draw[dashed,red,postaction={on each segment={mid arrow=red}}] (B2) to (A1);
	\draw[dashed,red,postaction={on each segment={mid arrow=red}}] (B2) to [out=45,in=315] (A2);
	\draw[dashed,red,postaction={on each segment={mid arrow=red}}] (C1) to [out=135,in=225] (B1);
	\draw[dashed,red,postaction={on each segment={mid arrow=red}}] (C2) to (B2);
	\draw[dashed,red,postaction={on each segment={mid arrow=red}}] (D1) to [out=135,in=225] (C1);
	\draw[dashed,red,postaction={on each segment={mid arrow=red}}] (D2) to (C1);
	\draw[dashed,red,postaction={on each segment={mid arrow=red}}] (D1) to (C3);
	\draw[dashed,red,postaction={on each segment={mid arrow=red}}] (D2) to (C3);
	\draw[dashed,red,postaction={on each segment={mid arrow=red}}] (D3) to (C2);
	\draw[dashed,red,postaction={on each segment={mid arrow=red}}] (D3) to (C4);
	\draw[dashed,red,postaction={on each segment={mid arrow=red}}] (D4) to (C2);
	\draw[dashed,red,postaction={on each segment={mid arrow=red}}] (D4) to [out=45,in=315] (C4);
	\draw[densely dotted,thick,Green,postaction={on each segment={mid arrow=Green}}] (C3) to (B1);
    \draw[densely dotted,thick,Green,postaction={on each segment={mid arrow=Green}}] (C4) to [out=45,in=315] (B2);
	\node at (4.3,3){$F^{0,0}$};
	\node at (5,2){$F^{1,0}$};
	\node at (4.3,1){$F^{0,1}$};
	\node at (5,0){$F^{1,1}$};
	\node at (4.3,-1){$F^{0,2}$};
	\node at (5,-2){$F^{1,2}$};
	\node at (4.3,-3){$F^{0,3}$};
	\draw[black,dotted] (-3,-3.5) -- (-3,-3);
	\draw[black,dotted] (-1,-3.5) -- (-1,-3);
	\draw[black,dotted] (1,-3.5) -- (1,-3);
	\draw[black,dotted] (3,-3.5) -- (3,-3);
\end{tikzpicture}
\caption{The first three levels of a separated Bratteli diagram. The blue edges are solid, the red edges are dashed and the green edges are dotted.}
\label{figure-separated.Bratteli.diagram}
\end{figure}

\section{The separated graph model of a dynamical system}\label{section-the.construction}

This section aims to describe the construction from \cite[Section 3]{AC2024} for the four examples presented in Section \ref{subsection-dynamical.systems.partitions}. To begin, we specialize the construction to our setting: a dynamical system $(X,h)$ consisting of a \textit{homeomorphism} $h$ defined on a totally disconnected compact metric space $X$ (cf. \cite[Section 4]{AC2024}). The crucial ingredient is a fixed sequence of $h$-refined partitions $\{\calP_n\}_{n \geq 0}$, a concept introduced in Section \ref{subsection-dynamical.systems.partitions} and whose existence was established in Lemma \ref{lemma-technical.2}.

As explained at the beginning of Section \ref{subsection-separated.Bratteli.diagrams}, we will make use of \textit{two-colored} separated Bratteli diagrams. The main idea for associating such a diagram $(F,D)$ to $(X,h)$ is the following: the vertices $F^0$ correspond to the sets $Z \in \calP_n$ and the edges $F^1$ correspond to inclusions. For each vertex $Z \in F^0$, we take the separation $D_v$ to be two-colored, using the colors blue and red. More concretely, the blue edges correspond to inclusion maps $Z' \subseteq Z$, and the red edges correspond to the action of $h$ or $h^{-1}$, depending on the parity of the level.

\begin{notation}
We will use the letter $e$ to denote blue edges and the letter $f$ to denote red edges. Thus, for example, the path $\mu = e_0 \, e_1 \, e_2$ is a $3$-path consisting of blue edges.
\end{notation}

\begin{construction}\label{construction-h.diagram}\cite[Construction 3.22]{AC2024}
Fix an $h$-refined sequence of partitions $\{\calP_n\}_{n \geq 0}$ of $X$. We construct $(F,D)$ inductively, as follows.

\begin{enumerate}

\item[(a)] (Vertices) For each $n \geq 0$, set $F^{0,n} := \calP_n$.

\item[(b)] (Edges) For each $n \geq 0$, set $F^{1,n} := B^{(n)} \sqcup R^{(n)}$, where
\begin{align*}
B^{(n)} & := \bigsqcup_{Z \in \calP_n} B_Z, \qquad B_Z := \{e(Z',Z) \mid Z' \in \calP_{n+1} \text{ with } Z' \subseteq Z\}, \\
R^{(n)} & := \bigsqcup_{Z \in \calP_n} R_Z, \qquad R_Z := \{f(Z',Z) \mid Z' \in \calP_{n+1} \text{ with } Z' \subseteq h^{(-1)^n}(Z)\}.
\end{align*}
Thus for any pair $Z \in \calP_n$, $Z' \in \calP_{n+1}$ such that $Z' \subseteq Z$, we write down a single \textit{blue} edge $e(Z',Z)$ with source $Z'$ and range $Z$. Also, for any pair $Z \in \calP_n, Z' \in \calP_{n+1}$ such that $Z' \subseteq h(Z)$ if $n$ is even and $Z' \subseteq h^{-1}(Z)$ if $n$ is odd, we write down a single \textit{red} edge $f(Z',Z)$ with source $Z'$ and range $Z$.
\item[(c)] (Separations) We set $D = \bigsqcup_{Z \in F^0} D_Z$, where for $n \geq 0$, the separation at $Z \in \calP_n$ is defined to be $D_Z = \{B_Z, R_Z\}$.
\end{enumerate}
Figure \ref{figure-separated.Bratteli.diagram.edges} summarizes the schematics of Construction \ref{construction-h.diagram}.
\end{construction}

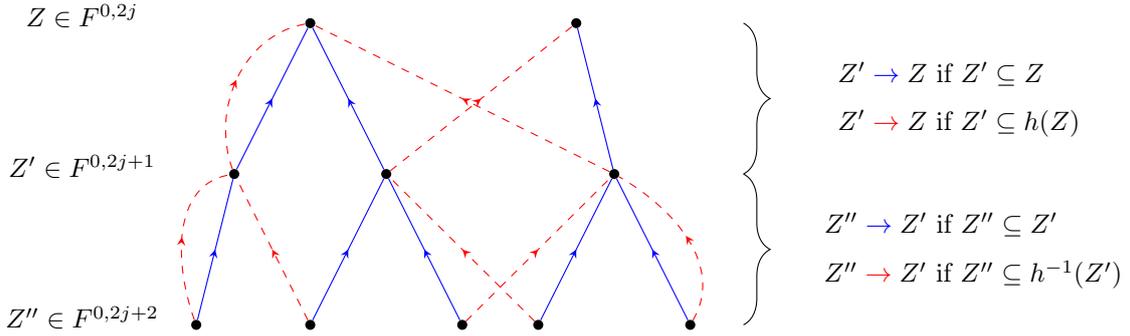
\begin{figure}[H]
\begin{tikzpicture}
	\node[circle,fill=black,scale=0.4] (B1) at (-3.5,0) {};
	\node[circle,fill=black,scale=0.4] (B2) at (-2,0) {};
	\node[circle,fill=black,scale=0.4] (B3) at (0,0) {};
	\node[circle,fill=black,scale=0.4] (B4) at (1,0) {};
	\node[circle,fill=black,scale=0.4] (B5) at (3,0) {};
	\node[circle,fill=black,scale=0.4] (M1) at (-3,2) {};
	\node[circle,fill=black,scale=0.4] (M2) at (-1,2) {};
	\node[circle,fill=black,scale=0.4] (M3) at (2,2) {};
	\node[circle,fill=black,scale=0.4] (T1) at (-2,4) {};
	\node[circle,fill=black,scale=0.4] (T2) at (1.5,4) {};
	\node[label=$Z \in F^{0,2j}$] (2N) at (-5,3.7) {};
	\node[label=$Z' \in F^{0,2j+1}$] (2N+1) at (-5,1.7) {};
	\node[label=$Z'' \in F^{0,2j+2}$] (2N+2) at (-5,-0.3) {};
	%
	\draw[-,blue,postaction={on each segment={mid arrow=blue}}] (M1) to (T1);
	\draw[-,blue,postaction={on each segment={mid arrow=blue}}] (M2) to (T1);
	\draw[-,blue,postaction={on each segment={mid arrow=blue}}] (M3) to (T2);
	\draw[-,blue,postaction={on each segment={mid arrow=blue}}] (B1) to (M1);
	\draw[-,blue,postaction={on each segment={mid arrow=blue}}] (B2) to (M2);
	\draw[-,blue,postaction={on each segment={mid arrow=blue}}] (B3) to (M2);
	\draw[-,blue,postaction={on each segment={mid arrow=blue}}] (B4) to (M3);
	\draw[-,blue,postaction={on each segment={mid arrow=blue}}] (B5) to (M3);
    \draw[dashed,red,postaction={on each segment={mid arrow=red}}] (M1) to [out=110,in=190] (T1);
	\draw[dashed,red,postaction={on each segment={mid arrow=red}}] (M2) to (T2);
	\draw[dashed,red,postaction={on each segment={mid arrow=red}}] (M3) to (T1);
    \draw[dashed,red,postaction={on each segment={mid arrow=red}}] (B1) to [out=110,in=190] (M1);
	\draw[dashed,red,postaction={on each segment={mid arrow=red}}] (B2) to (M1);
    \draw[dashed,red,postaction={on each segment={mid arrow=red}}] (B3) to (M3);
    \draw[dashed,red,postaction={on each segment={mid arrow=red}}] (B4) to (M2);
    \draw[dashed,red,postaction={on each segment={mid arrow=red}}] (B5) to [out=60,in=360-30] (M3);
    \draw[decorate,decoration={brace,amplitude=10pt,mirror}] 
    (3.7,0) -- (3.7,2);
	\node at (6.3,1.33){$Z'' \begingroup\color{blue} \ra \endgroup Z' \text{ if } Z'' \subseteq Z'$};
	\node at (6.722,0.67){$Z'' \begingroup\color{red} \ra \endgroup Z' \text{ if } Z'' \subseteq h^{-1}(Z')$};	
	\draw[decorate,decoration={brace,amplitude=10pt,mirror}] 
    (3.7,2) -- (3.7,4);
	\node at (6.3,3.33){$Z' \begingroup\color{blue} \ra \endgroup Z \text{ if } Z' \subseteq Z$};
	\node at (6.537,2.67){$Z' \begingroup\color{red} \ra \endgroup Z \text{ if } Z' \subseteq h(Z)$};
\end{tikzpicture}
\caption{Construction of the edges of $(F,D)$. The blue edges are solid, while the red edges are dashed.}
\label{figure-separated.Bratteli.diagram.edges}
\end{figure}
Some observations concerning Construction \ref{construction-h.diagram} are in order.

\begin{observations}\label{observations-1}
\text{ }

\begin{enumerate}
\item[(1)] For each $n \geq 1$ and $Z \in \calP_n$, there is exactly one blue edge and one red edge with source $Z$. This is a direct consequence of the fact that $\calP_{n+1}$ is finer than $\calP_n \wedge h(\calP_n) \wedge h^{-1}(\calP_n)$. Hence $|F^{1,n}| = 2|F^{0,n+1}|$ for all $n \geq 0$.
\item[(2)] Suppose that we have a $2$-path $\mu = e_0 \, e_1$ consisting of blue edges, with source $Z'' \in \calP_{n+2}$, range $Z \in \calP_n$, and middle vertex $Z' \in \calP_{n+1}$. Thus $e_0 = e(Z',Z)$ and $e_1 = e(Z'',Z')$. This means precisely that
$$Z'' \subseteq Z' \subseteq Z.$$
For simplicity, we take $n$ to be even. Since $\calP_{n+2}$ is finer than $h^{-1}(\calP_{n+1})$, there exist unique vertices $Z'_1 \in \calP_{n+1}$, $Z_1 \in \calP_n$ such that $Z'' \subseteq h^{-1}(Z'_1)$ and $Z'_1 \subseteq h(Z_1)$. But then
$$Z'' \subseteq h^{-1}(Z'_1) \subseteq h^{-1}(h(Z_1)) = Z_1.$$
Because we already know $Z'' \subseteq Z$ and these are partitions, $Z_1 = Z$ by uniqueness. Therefore, there exists a $2$-path $\mu' = f_0 f_1$ consisting of red edges, with source $Z''$, range $Z$, and middle vertex $Z'_1$. Namely, we take $f_0 := f(Z'_1,Z)$ and $f_1 := f(Z'',Z'_1)$. A similar result holds for odd $n$, and also if we exchange blue $2$-paths for red $2$-paths.
\end{enumerate}
\end{observations}

Observation \ref{observations-1}(2) indicates that the separated Bratteli diagram $(F,D)$ contains a large amount of \textit{rhombi}. These are quadruples of edges $(e_0,e_1;f_0,f_1)$, with $e_i$ blue and $f_i$ red, satisfying the incidence relations
$$r(e_1) = s(e_0), \quad r(f_1) = s(f_0), \quad r(e_0) = r(f_0) \quad \text{and} \quad s(e_1) = s(f_1).$$
See Figure \ref{figure-romb} for an illustration of a rhombus.
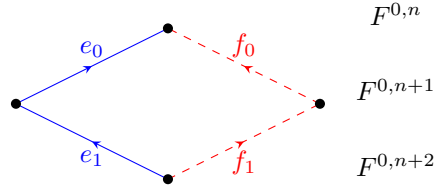
\begin{figure}[H]
\begin{tikzpicture}
	\path [draw=blue,postaction={on each segment={mid arrow=blue}}]
		(0,0) -- node[below,blue]{$e_1$} (-2,1)
		(-2,1) -- node[above,blue]{$e_0$} (0,2)
		;
	\path [dashed,draw=red,postaction={on each segment={mid arrow=red}}]
		(0,0) -- node[below,red]{$f_1$} (2,1)
		(2,1) -- node[above,red]{$f_0$} (0,2)
		;
	\node[circle,fill=black,scale=0.4] (T) at (0,0) {};
    \node[circle,fill=black,scale=0.4] (T) at (-2,1) {};
    \node[circle,fill=black,scale=0.4] (T) at (2,1) {};
    \node[circle,fill=black,scale=0.4] (T) at (0,2) {};
	\node[label=$F^{0,n}$] (N) at (3,1.8) {};
	\node[label=$F^{0,n+1}$] (N+1) at (3,0.8) {};
	\node[label=$F^{0,n+2}$] (N+2) at (3,-0.2) {};
\end{tikzpicture}
\caption{A quadruple of edges $(e_0,e_1;f_0,f_1)$ defining a rhombus in $(F,D)$. The blue edges are solid, and the red edges are dashed.}
\label{figure-romb}
\end{figure}

These observations lead to a characterization of the class of separated Bratteli diagrams that arise from dynamical systems $(X,h)$ via this construction. These are termed \textit{$h$-diagrams} (see \cite[Definition 4.1]{AC2024}). The main result of \cite[Section 4]{AC2024}, Theorem 4.8, establishes a bijective correspondence --- modulo suitable equivalence relations --- between the class of dynamical systems $(X,h)$ considered here and the class of $h$-diagrams.

The rest of this section is devoted to explicitly describing the $h$-diagrams associated with the four examples presented in Section \ref{subsection-dynamical.systems.partitions}.

\subsection{Example: the two-sided shift}\label{example-two.sided.shift}

Recall that in the two-sided shift example, $X = \{0,1\}^{\Z}$ and the homeomorphism considered is the shift $\sigma \colon X \to X$ defined by $\sigma(x)_i = x_{i+1}$. As established in Section \ref{subsection-dynamical.systems.partitions}, the sequence of partitions $\{\calP_n\}_{n \geq 0}$, given by $\calP_0 = \{X\}$ and $\calP_{n+1} = \{[\epsilon_{-n} \cdots \underline{\epsilon_0} \cdots \epsilon_n] \mid \epsilon_i \in \{0,1\} \text{ for all } i\}$ for $n \geq 1$, is $\sigma$-refined. We now utilize this sequence to construct the associated $h$-diagram $(F,D)$.

Set $F^{0,n} = \calP_n$. Note that $|F^{0,0}| = 1$ and $|F^{0,n}| = 2^{2n-1}$ for $n \geq 1$. The zeroth level of the diagram, namely the bipartite graph $(F^{0,0} \sqcup F^{0,1},F^{1,0})$ together with its separations, is easily established: we have a single vertex $X$ in the top row $F^{0,0}$ and two vertices $[\underline{0}]$, $[\underline{1}]$ in the bottom row $F^{0,1}$, and we have two blue edges and two red edges in $F^{1,0}$. The blue edges correspond to the inclusions $[\underline{0}], [\underline{1}] \subseteq X$, and the red edges correspond to the inclusions $[\underline{0}], [\underline{1}] \subseteq \sigma(X) = X$. We can observe that in this case all inclusions are trivial, in the sense that all edges share the same range.

For $n \geq 1$, the $n$-th level $(F^{0,n} \sqcup F^{0,n+1},F^{1,n})$ connects $2^{2n-1}$ vertices in the top row $F^{0,n}$ to $2^{2n+1}$ vertices in the bottom row $F^{0,n+1}$. Accordingly, $F^{1,n}$ contains $2 \cdot 2^{2n+1}$ edges, split evenly between blue and red.

The blue edges represent the inclusions between blocks of the partitions:
$$[\epsilon_{-n} \cdots \underline{\epsilon_0} \cdots \epsilon_n] \subseteq [\epsilon_{-n+1} \cdots \underline{\epsilon_0} \cdots \epsilon_{n-1}].$$
Recalling the computations \eqref{equation-two.sided.shift}, the red edges are determined by the inclusions:
$$[\epsilon_{-n} \cdots \underline{\epsilon_0} \cdots \epsilon_n] \subseteq \sigma([\epsilon_{-n} \cdots \underline{\epsilon_{-1}} \cdots \epsilon_{n-2}])$$
when $n$ is even, and
$$[\epsilon_{-n} \cdots \underline{\epsilon_0} \cdots \epsilon_n] \subseteq \sigma^{-1}([\epsilon_{-n+2} \cdots \underline{\epsilon_1} \cdots \epsilon_n])$$
when $n$ is odd. To illustrate, at the second level ($n=1$), we have the following inclusion relations: $[\epsilon_{-1} \underline{\epsilon_0} \epsilon_1] \subseteq [\underline{\epsilon_0}]$ and $[\epsilon_{-1} \underline{\epsilon_0} \epsilon_1] \subseteq \sigma^{-1}([\underline{\epsilon_1}])$. Diagrammatically, this translates to drawing a blue edge $[\epsilon_{-1}\underline{\epsilon_0}\epsilon_1] \textcolor{blue}{\to} [\underline{\epsilon_0}]$ and a red edge $[\epsilon_{-1}\underline{\epsilon_0}\epsilon_1] \textcolor{red}{\to} [\underline{\epsilon_1}]$.

The first two levels of the resulting $(F,D)$ are depicted in Figure \ref{figure:first_levels_shift}.

\begin{figure}[H]
\begin{tikzpicture}
	\node[circle,fill=black,scale=0.4,label=above:\textcolor{black}{$X$}] (A1) at (0,4) {};
    \node[circle,fill=black,scale=0.4,label=above:\textcolor{black}{$[\underline{0}]$}] (B1) at (-4,2) {};
    \node[circle,fill=black,scale=0.4,label=above:\textcolor{black}{$[\underline{1}]$}] (B2) at (4,2) {};
	\node[circle,fill=black,scale=0.4,label=below:\textcolor{black}{$[0\underline{0}0]$}] (C1) at (-7,0) {};
	\node[circle,fill=black,scale=0.4,label=below:\textcolor{black}{$[1\underline{0}0]$}] (C2) at (-5,0) {};
	\node[circle,fill=black,scale=0.4,label=below:\textcolor{black}{$[0\underline{0}1]$}] (C3) at (-3,0) {};
	\node[circle,fill=black,scale=0.4,label=below:\textcolor{black}{$[1\underline{0}1]$}] (C4) at (-1,0) {};
	\node[circle,fill=black,scale=0.4,label=below:\textcolor{black}{$[0\underline{1}0]$}] (C5) at (1,0) {};
	\node[circle,fill=black,scale=0.4,label=below:\textcolor{black}{$[1\underline{1}0]$}] (C6) at (3,0) {};
	\node[circle,fill=black,scale=0.4,label=below:\textcolor{black}{$[0\underline{1}1]$}] (C7) at (5,0) {};
	\node[circle,fill=black,scale=0.4,label=below:\textcolor{black}{$[1\underline{1}1]$}] (C8) at (7,0) {};
	\draw[-,blue,postaction={on each segment={mid arrow=blue}}] (B1) to (A1);
	\draw[-,blue,postaction={on each segment={mid arrow=blue}}] (B2) to (A1);
	\draw[-,blue,postaction={on each segment={mid arrow=blue}}] (C1) to (B1);
	\draw[-,blue,postaction={on each segment={mid arrow=blue}}] (C2) to (B1);
	\draw[-,blue,postaction={on each segment={mid arrow=blue}}] (C3) to (B1);
	\draw[-,blue,postaction={on each segment={mid arrow=blue}}] (C4) to (B1);
	\draw[-,blue,postaction={on each segment={mid arrow=blue}}] (C5) to (B2);
	\draw[-,blue,postaction={on each segment={mid arrow=blue}}] (C6) to (B2);
	\draw[-,blue,postaction={on each segment={mid arrow=blue}}] (C7) to (B2);
	\draw[-,blue,postaction={on each segment={mid arrow=blue}}] (C8) to (B2);
	\draw[dashed,red,postaction={on each segment={mid arrow=red}}] (B1) to [out=90-10,in=180+10] (A1);
	\draw[dashed,red,postaction={on each segment={mid arrow=red}}] (B2) to [out=90+10,in=360-10] (A1);
	\draw[dashed,red,postaction={on each segment={mid arrow=red}}] (C1) to [out=90+10,in=180+10] (B1);
	\draw[dashed,red,postaction={on each segment={mid arrow=red}}] (C2) to [out=90+10,in=180+60] (B1);
	\draw[dashed,red,postaction={on each segment={mid arrow=red}}] (C3) to (B2);
	\draw[dashed,red,postaction={on each segment={mid arrow=red}}] (C4) to (B2);
	\draw[dashed,red,postaction={on each segment={mid arrow=red}}] (C5) to (B1);
	\draw[dashed,red,postaction={on each segment={mid arrow=red}}] (C6) to (B1);
	\draw[dashed,red,postaction={on each segment={mid arrow=red}}] (C7) to [out=90-10,in=360-60] (B2);
	\draw[dashed,red,postaction={on each segment={mid arrow=red}}] (C8) to [out=90-10,in=360-10] (B2);
	\node at (7.7,4){$\calP_0$};
	\node at (7.7,2){$\calP_1$};
	\node at (7.7,0){$\calP_2$};
	\draw[black,dotted] (-7,-0.8) -- (-7,0);
	\draw[black,dotted] (-5,-0.8) -- (-5,0);
	\draw[black,dotted] (-3,-0.8) -- (-3,0);
	\draw[black,dotted] (-1,-0.8) -- (-1,0);
	\draw[black,dotted] (1,-0.8) -- (1,0);
	\draw[black,dotted] (3,-0.8) -- (3,0);
	\draw[black,dotted] (5,-0.8) -- (5,0);
	\draw[black,dotted] (7,-0.8) -- (7,0);
\end{tikzpicture}
\caption{The first two levels of the $h$-diagram associated to the two-sided shift. The blue edges are solid, and the red edges are dashed.}
\label{figure:first_levels_shift}
\end{figure}
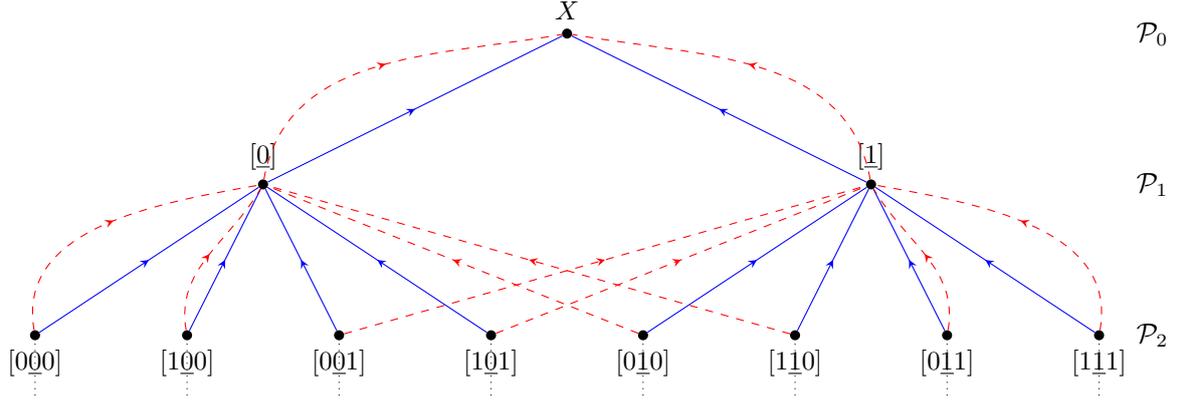

We can readily infer dynamical properties of $(X,\sigma)$ from the diagram. For instance, the shift map $\sigma$ has two fixed points, $(\dots, 0, \underline{0}, 0, \dots)$ and $(\dots, 1, \underline{1}, 1, \dots)$. These invariant sequences manifest in the diagram as straight, isolated vertical paths where the blue and red edges parallel each other (see Figure \ref{figure-fixed.points}).

\begin{figure}[H]
\begin{tikzpicture}
	\node[circle,fill=black,scale=0.4,label=above:\textcolor{black}{$X$}] (A1) at (0,4) {};
    \node[circle,fill=black,scale=0.4,label=below:\textcolor{black}{$[\underline{0}]$}] (B1) at (-3,2.6) {};
    \node[circle,fill=black,scale=0.4,label=below:\textcolor{black}{$[\underline{1}]$}] (B2) at (3,2.6) {};
	\node[circle,fill=black,scale=0.4,label=below:\textcolor{black}{$[0\underline{0}0]$}] (C1) at (-5,1.3) {};
	\node[circle,fill=black,scale=0.4,label=below:\textcolor{black}{$[1\underline{1}1]$}] (C2) at (5,1.3) {};
    \node[circle,fill=black,scale=0.4,label=below:\textcolor{black}{$[00\underline{0}00]$}] (D1) at (-6,0) {};
	\node[circle,fill=black,scale=0.4,label=below:\textcolor{black}{$[11\underline{1}11]$}] (D2) at (6,0) {};
	\draw[-,blue,postaction={on each segment={mid arrow=blue}}] (B1) to (A1);
	\draw[-,blue,postaction={on each segment={mid arrow=blue}}] (B2) to (A1);
	\draw[-,blue,postaction={on each segment={mid arrow=blue}}] (C1) to (B1);
	\draw[-,blue,postaction={on each segment={mid arrow=blue}}] (C2) to (B2);
	\draw[-,blue,postaction={on each segment={mid arrow=blue}}] (D1) to (C1);
	\draw[-,blue,postaction={on each segment={mid arrow=blue}}] (D2) to (C2);
	\draw[dashed,red,postaction={on each segment={mid arrow=red}}] (B1) to [out=90-10,in=180+10] (A1);
	\draw[dashed,red,postaction={on each segment={mid arrow=red}}] (B2) to [out=90+10,in=360-10] (A1);
	\draw[dashed,red,postaction={on each segment={mid arrow=red}}] (C1) to [out=90+10,in=180+10] (B1);
	\draw[dashed,red,postaction={on each segment={mid arrow=red}}] (C2) to [out=90-10,in=360-10] (B2);
	\draw[dashed,red,postaction={on each segment={mid arrow=red}}] (D1) to [out=90+10,in=180+10] (C1);
	\draw[dashed,red,postaction={on each segment={mid arrow=red}}] (D2) to [out=90-10,in=360-10] (C2);
	\node at (7.5,4){$\calP_0$};
	\node at (7.5,2.6){$\calP_1$};
	\node at (7.5,1.3){$\calP_2$};
    \node at (7.5,0){$\calP_3$};
	\draw[black,dotted] (-6,-0.8) -- (-6,0);
	\draw[black,dotted] (6,-0.8) -- (6,0);
\end{tikzpicture}
\caption{Diagrammatic representation of the two fixed points for the two-sided shift. The blue edges are solid, and the red edges are dashed.}
\label{figure-fixed.points}
\end{figure}
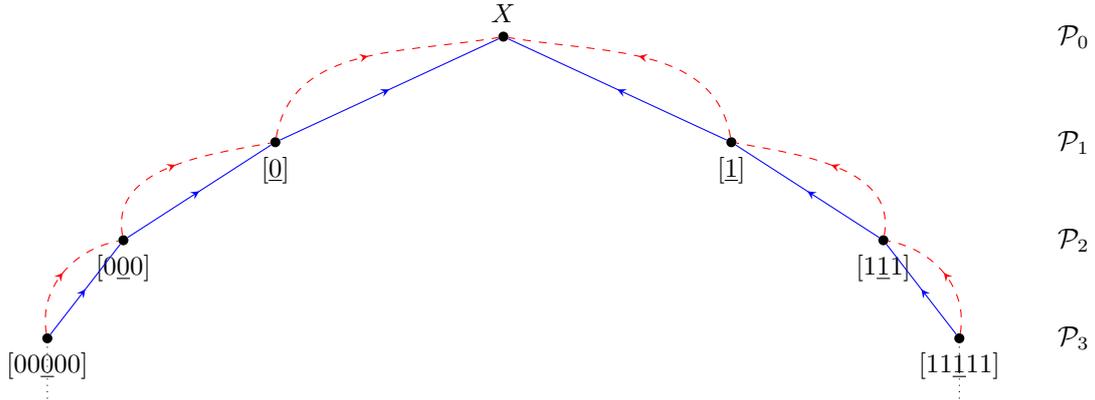

\subsection{Example: the bit-wise NOT map}\label{example-bitwise.NOT}

Returning to the bit-wise NOT map $(X,\tau)$, we retain the symbolic representation for $X$ and the sequence of partitions used for the two-sided shift (Example \ref{example-two.sided.shift}). Consequently, the vertices and the blue edges (representing inclusions between blocks) of the associated $h$-diagram $(F,D)$ are identical to those in the previous example. The edge counts also match: for every $n \geq 0$, there are $2^{2n+1}$ blue edges and $2^{2n+1}$ red edges.

The distinction arises solely in the routing of the red edges for $n \geq 1$. Using the notation $\ol{\epsilon} := 1 - \epsilon$ for $\epsilon \in \{0,1\}$, we have $\tau(x)_i = \ol{x}_i$. Since $\tau = \tau^{-1}$, these edges are determined by the inclusions
$$[\epsilon_{-n} \cdots \underline{\epsilon_0} \cdots \epsilon_n] = \tau^{\pm 1}([\ol{\epsilon}_{-n} \cdots \underline{\ol{\epsilon}_0} \cdots \ol{\epsilon}_n]) \subseteq \tau^{\pm 1}([\ol{\epsilon}_{-n+1} \cdots \underline{\ol{\epsilon}_0} \cdots \ol{\epsilon}_{n-1}]).$$
Figure \ref{figure:bitwise_not} illustrates the first two levels of the associated $h$-diagram $(F,D)$.

\begin{figure}[H]
\begin{tikzpicture}
	\node[circle,fill=black,scale=0.4,label=above:\textcolor{black}{$X$}] (A1) at (0,4) {};
    \node[circle,fill=black,scale=0.4,label=above:\textcolor{black}{$[\underline{0}]$}] (B1) at (-4,2) {};
    \node[circle,fill=black,scale=0.4,label=above:\textcolor{black}{$[\underline{1}]$}] (B2) at (4,2) {};
	\node[circle,fill=black,scale=0.4,label=below:\textcolor{black}{$[0\underline{0}0]$}] (C1) at (-7,0) {};
	\node[circle,fill=black,scale=0.4,label=below:\textcolor{black}{$[1\underline{0}0]$}] (C2) at (-5,0) {};
	\node[circle,fill=black,scale=0.4,label=below:\textcolor{black}{$[0\underline{0}1]$}] (C3) at (-3,0) {};
	\node[circle,fill=black,scale=0.4,label=below:\textcolor{black}{$[1\underline{0}1]$}] (C4) at (-1,0) {};
	\node[circle,fill=black,scale=0.4,label=below:\textcolor{black}{$[0\underline{1}0]$}] (C5) at (1,0) {};
	\node[circle,fill=black,scale=0.4,label=below:\textcolor{black}{$[1\underline{1}0]$}] (C6) at (3,0) {};
	\node[circle,fill=black,scale=0.4,label=below:\textcolor{black}{$[0\underline{1}1]$}] (C7) at (5,0) {};
	\node[circle,fill=black,scale=0.4,label=below:\textcolor{black}{$[1\underline{1}1]$}] (C8) at (7,0) {};
	\draw[-,blue,postaction={on each segment={mid arrow=blue}}] (B1) to (A1);
	\draw[-,blue,postaction={on each segment={mid arrow=blue}}] (B2) to (A1);
	\draw[-,blue,postaction={on each segment={mid arrow=blue}}] (C1) to (B1);
	\draw[-,blue,postaction={on each segment={mid arrow=blue}}] (C2) to (B1);
	\draw[-,blue,postaction={on each segment={mid arrow=blue}}] (C3) to (B1);
	\draw[-,blue,postaction={on each segment={mid arrow=blue}}] (C4) to (B1);
	\draw[-,blue,postaction={on each segment={mid arrow=blue}}] (C5) to (B2);
	\draw[-,blue,postaction={on each segment={mid arrow=blue}}] (C6) to (B2);
	\draw[-,blue,postaction={on each segment={mid arrow=blue}}] (C7) to (B2);
	\draw[-,blue,postaction={on each segment={mid arrow=blue}}] (C8) to (B2);
	\draw[dashed,red,postaction={on each segment={mid arrow=red}}] (B1) to [out=90-10,in=180+10] (A1);
	\draw[dashed,red,postaction={on each segment={mid arrow=red}}] (B2) to [out=90+10,in=360-10] (A1);
	\draw[dashed,red,postaction={on each segment={mid arrow=red}}] (C1) to (B2);
	\draw[dashed,red,postaction={on each segment={mid arrow=red}}] (C2) to (B2);
	\draw[dashed,red,postaction={on each segment={mid arrow=red}}] (C3) to (B2);
	\draw[dashed,red,postaction={on each segment={mid arrow=red}}] (C4) to (B2);
	\draw[dashed,red,postaction={on each segment={mid arrow=red}}] (C5) to (B1);
	\draw[dashed,red,postaction={on each segment={mid arrow=red}}] (C6) to (B1);
	\draw[dashed,red,postaction={on each segment={mid arrow=red}}] (C7) to (B1);
	\draw[dashed,red,postaction={on each segment={mid arrow=red}}] (C8) to (B1);
	\node at (7.7,4){$\calP_0$};
	\node at (7.7,2){$\calP_1$};
	\node at (7.7,0){$\calP_2$};
	\draw[black,dotted] (-7,-0.8) -- (-7,0);
	\draw[black,dotted] (-5,-0.8) -- (-5,0);
	\draw[black,dotted] (-3,-0.8) -- (-3,0);
	\draw[black,dotted] (-1,-0.8) -- (-1,0);
	\draw[black,dotted] (1,-0.8) -- (1,0);
	\draw[black,dotted] (3,-0.8) -- (3,0);
	\draw[black,dotted] (5,-0.8) -- (5,0);
	\draw[black,dotted] (7,-0.8) -- (7,0);
\end{tikzpicture}
\caption{The first two levels of the $h$-diagram associated to the bit-wise NOT. The blue edges are solid, and the red edges are dashed.}
\label{figure:bitwise_not}
\end{figure}
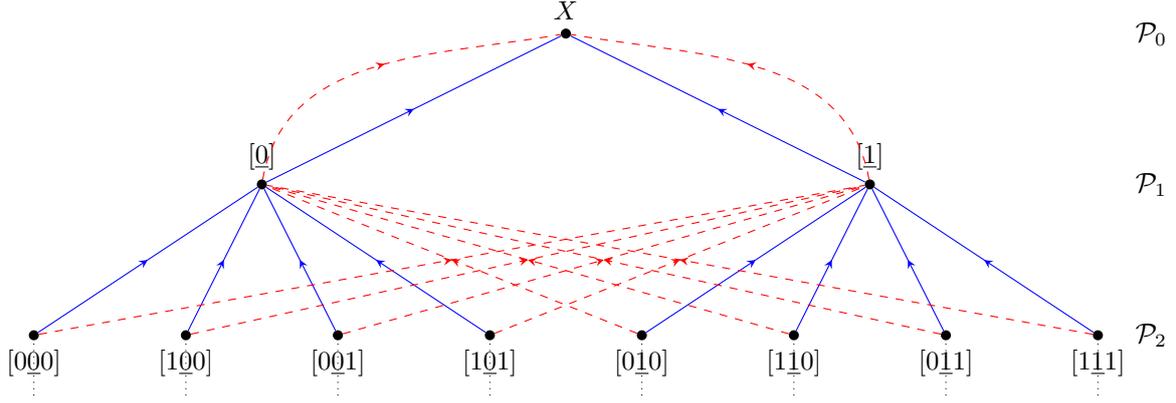

As with the fixed points in the previous example, the involution property $\tau^2 = \mathrm{id}_X$ is precisely detected by the diagram. We formalize and generalize how dynamical features can be read from the combinatorics of $(F,D)$ in the following section, Section \ref{section:dynamical_properties}, see in particular Theorem \ref{theorem:global_periodicity}.

\subsection{Example: the odometer map on the Cantor space}\label{example-odometer}

We now revisit the odometer map $(X,\operatorname{ad})$. Recall that $X$ is again the Cantor space, but we now use a different symbolic representation: the space of one-sided infinite sequences of zeros and ones. We use the $\operatorname{ad}$-refined sequence of partitions already considered in Section \ref{subsection-dynamical.systems.partitions}, namely $\calP_0 = \{X\}$ and $\calP_{n+1} = \{[\underline{\epsilon_0} \cdots \epsilon_n] \mid \epsilon_i \in \{0,1\} \text{ for all } i\}$ for $n \geq 0$. As previously mentioned, the map $\operatorname{ad}$ permutes the blocks within a fixed partition level $\calP_n$. More concretely, we have $\operatorname{ad}([\underline{0} \epsilon_1 \cdots \epsilon_n]) = [\underline{1} \epsilon_1 \cdots \epsilon_n]$, $\operatorname{ad}([\underline{1} \stackrel{l}{\cdots} 1 0\epsilon_{l+1} \cdots \epsilon_n]) = [\underline{0} \stackrel{l}{\cdots} 0 1 \epsilon_{l+1} \cdots \epsilon_n]$ for $l \geq 1$, and $\operatorname{ad}([\underline{1} \cdots 1]) = [\underline{0} \cdots 0]$.

We construct the $h$-diagram $(F,D)$ associated with the sequence $\{\calP_n\}_{n \geq 0}$. The zeroth level is identical to those in the previous two examples, so we fix $n \geq 1$ and focus on the $n$-th level $(F^{0,n} \sqcup F^{0,n+1},F^{1,n})$. Here $|F^{0,n}| = 2^n$ and $|F^{0,n+1}| = 2^{n+1}$. Thus, $F^{1,n}$ contains $2^{n+1}$ blue edges and $2^{n+1}$ red edges. The blue edges correspond to the inclusions:
$$[\underline{\epsilon_0} \cdots \epsilon_{n-1} \epsilon_n] \subseteq [\underline{\epsilon_0} \cdots \epsilon_{n-1}],$$
while the red edges correspond, for even $n$, to the inclusions:
$$[\underline{\epsilon_0} \cdots \epsilon_n] \subseteq \begin{cases} \operatorname{ad}([\underline{0} \epsilon_1 \cdots \epsilon_{n-1}]) & \text{if } \epsilon_0 = 1, \\ \operatorname{ad}([\underline{1} \stackrel{l}{\cdots} 1 0 \epsilon_{l+1} \cdots \epsilon_{n-1}]) & \text{if } \epsilon_i = 0 \text{ for all } i \in \{0,\dots,l-1\} \text{ and } \epsilon_l = 1, \\ \operatorname{ad}([\underline{1} \cdots 1]) & \text{if } \epsilon_i = 0 \text{ for all } i \in \{0,\dots,n\}. \end{cases}$$
and, for odd $n$, to the inclusions:
$$[\underline{\epsilon_0} \cdots \epsilon_n] \subseteq \begin{cases} \operatorname{ad}^{-1}([\underline{1} \epsilon_1 \cdots \epsilon_{n-1}]) & \text{if } \epsilon_0 = 0, \\ \operatorname{ad}^{-1}([\underline{0} \stackrel{l}{\cdots} 0 1 \epsilon_{l+1} \cdots \epsilon_{n-1}]) & \text{if } \epsilon_i = 1 \text{ for all } i \in \{0,\dots,l-1\} \text{ and } \epsilon_l = 0, \\ \operatorname{ad}^{-1}([\underline{0} \cdots 0]) & \text{if } \epsilon_i = 1 \text{ for all } i \in \{0,\dots,n\}. \end{cases}$$
For example, for $n=2$, the inclusion $[\underline{\epsilon_0} \epsilon_1 \epsilon_2] \subseteq [\underline{\epsilon_0} \epsilon_1]$ yields the blue edge $[\underline{\epsilon_0} \epsilon_1 \epsilon_2] \textcolor{blue}{\to} [\underline{\epsilon_0} \epsilon_1]$. Meanwhile, the inclusions $[\underline{1} \epsilon_1 \epsilon_2] \subseteq \operatorname{ad}([\underline{0} \epsilon_1])$, $[\underline{0} 1 \epsilon_2] \subseteq \operatorname{ad}([\underline{1} 0])$ and $[\underline{0} 0 \epsilon_2] \subseteq \operatorname{ad}([\underline{1} 1])$ dictate the red edges, which are drawn as $[\underline{1} \epsilon_1 \epsilon_2] \textcolor{red}{\to} [\underline{0} \epsilon_1]$, $[\underline{0} 1 \epsilon_2] \textcolor{red}{\to} [\underline{1} 0]$ and $[\underline{0} 0 \epsilon_2] \textcolor{red}{\to} [\underline{1} 1]$.

The resulting structure for the first three levels is depicted in Figure \ref{figure:odometer_Cantor}.

\begin{figure}[H]
\begin{tikzpicture}
	\node[circle,fill=black,scale=0.4,label=above:\textcolor{black}{$X$}] (A1) at (0,6) {};
    \node[circle,fill=black,scale=0.4,label=above:\textcolor{black}{$[\underline{0}]$}] (B1) at (-4,4) {};
    \node[circle,fill=black,scale=0.4,label=above:\textcolor{black}{$[\underline{1}]$}] (B2) at (4,4) {};
    \node[circle,fill=black,scale=0.4,label=above:\textcolor{black}{$[\underline{0}0]$}] (C1) at (-6,2) {};
    \node[circle,fill=black,scale=0.4,label=above:\textcolor{black}{$[\underline{0}1]$}] (C2) at (-2,2) {};
    \node[circle,fill=black,scale=0.4,label=above:\textcolor{black}{$[\underline{1}1]$}] (C3) at (2,2) {};
    \node[circle,fill=black,scale=0.4,label=above:\textcolor{black}{$[\underline{1}0]$}] (C4) at (6,2) {};
	\node[circle,fill=black,scale=0.4,label=below:\textcolor{black}{$[\underline{0}00]$}] (D1) at (-7,0) {};
	\node[circle,fill=black,scale=0.4,label=below:\textcolor{black}{$[\underline{0}01]$}] (D2) at (-5,0) {};
	\node[circle,fill=black,scale=0.4,label=below:\textcolor{black}{$[\underline{0}10]$}] (D3) at (-3,0) {};
	\node[circle,fill=black,scale=0.4,label=below:\textcolor{black}{$[\underline{0}11]$}] (D4) at (-1,0) {};
	\node[circle,fill=black,scale=0.4,label=below:\textcolor{black}{$[\underline{1}00]$}] (D5) at (1,0) {};
	\node[circle,fill=black,scale=0.4,label=below:\textcolor{black}{$[\underline{1}01]$}] (D6) at (3,0) {};
	\node[circle,fill=black,scale=0.4,label=below:\textcolor{black}{$[\underline{1}10]$}] (D7) at (5,0) {};
	\node[circle,fill=black,scale=0.4,label=below:\textcolor{black}{$[\underline{1}11]$}] (D8) at (7,0) {};
	\draw[-,blue,postaction={on each segment={mid arrow=blue}}] (B1) to (A1);
	\draw[-,blue,postaction={on each segment={mid arrow=blue}}] (B2) to (A1);
	\draw[-,blue,postaction={on each segment={mid arrow=blue}}] (C1) to (B1);
	\draw[-,blue,postaction={on each segment={mid arrow=blue}}] (C2) to (B1);
	\draw[-,blue,postaction={on each segment={mid arrow=blue}}] (C3) to (B2);
	\draw[-,blue,postaction={on each segment={mid arrow=blue}}] (C4) to (B2);
	\draw[-,blue,postaction={on each segment={mid arrow=blue}}] (D1) to (C1);
	\draw[-,blue,postaction={on each segment={mid arrow=blue}}] (D2) to (C1);
	\draw[-,blue,postaction={on each segment={mid arrow=blue}}] (D3) to (C2);
	\draw[-,blue,postaction={on each segment={mid arrow=blue}}] (D4) to (C2);
    \draw[-,blue,postaction={on each segment={mid arrow=blue}}] (D5) to (C3);
	\draw[-,blue,postaction={on each segment={mid arrow=blue}}] (D6) to (C3);
	\draw[-,blue,postaction={on each segment={mid arrow=blue}}] (D7) to (C4);
	\draw[-,blue,postaction={on each segment={mid arrow=blue}}] (D8) to (C4);
	\draw[dashed,red,postaction={on each segment={mid arrow=red}}] (B1) to [out=90-10,in=180+10] (A1);
	\draw[dashed,red,postaction={on each segment={mid arrow=red}}] (B2) to [out=90+10,in=360-10] (A1);
	\draw[dashed,red,postaction={on each segment={mid arrow=red}}] (C1) to (B2);
	\draw[dashed,red,postaction={on each segment={mid arrow=red}}] (C2) to (B2);
	\draw[dashed,red,postaction={on each segment={mid arrow=red}}] (C3) to (B1);
	\draw[dashed,red,postaction={on each segment={mid arrow=red}}] (C4) to (B1);
	\draw[dashed,red,postaction={on each segment={mid arrow=red}}] (D1) to (C3);
	\draw[dashed,red,postaction={on each segment={mid arrow=red}}] (D2) to (C3);
	\draw[dashed,red,postaction={on each segment={mid arrow=red}}] (D3) to (C4);
    \draw[dashed,red,postaction={on each segment={mid arrow=red}}] (D4) to (C4);
    \draw[dashed,red,postaction={on each segment={mid arrow=red}}] (D5) to (C1);
    \draw[dashed,red,postaction={on each segment={mid arrow=red}}] (D6) to (C1);
    \draw[dashed,red,postaction={on each segment={mid arrow=red}}] (D7) to (C2);
    \draw[dashed,red,postaction={on each segment={mid arrow=red}}] (D8) to (C2);
	\node at (7.7,6){$\calP_0$};
	\node at (7.7,4){$\calP_1$};
	\node at (7.7,2){$\calP_2$};
  	\node at (7.7,0){$\calP_3$};
	\draw[black,dotted] (-7,-0.8) -- (-7,0);
	\draw[black,dotted] (-5,-0.8) -- (-5,0);
	\draw[black,dotted] (-3,-0.8) -- (-3,0);
	\draw[black,dotted] (-1,-0.8) -- (-1,0);
	\draw[black,dotted] (1,-0.8) -- (1,0);
	\draw[black,dotted] (3,-0.8) -- (3,0);
	\draw[black,dotted] (5,-0.8) -- (5,0);
	\draw[black,dotted] (7,-0.8) -- (7,0);
\end{tikzpicture}
\caption{The first three levels of the $h$-diagram associated to the odometer map. The blue edges are solid, and the red edges are dashed.}
\label{figure:odometer_Cantor}
\end{figure}
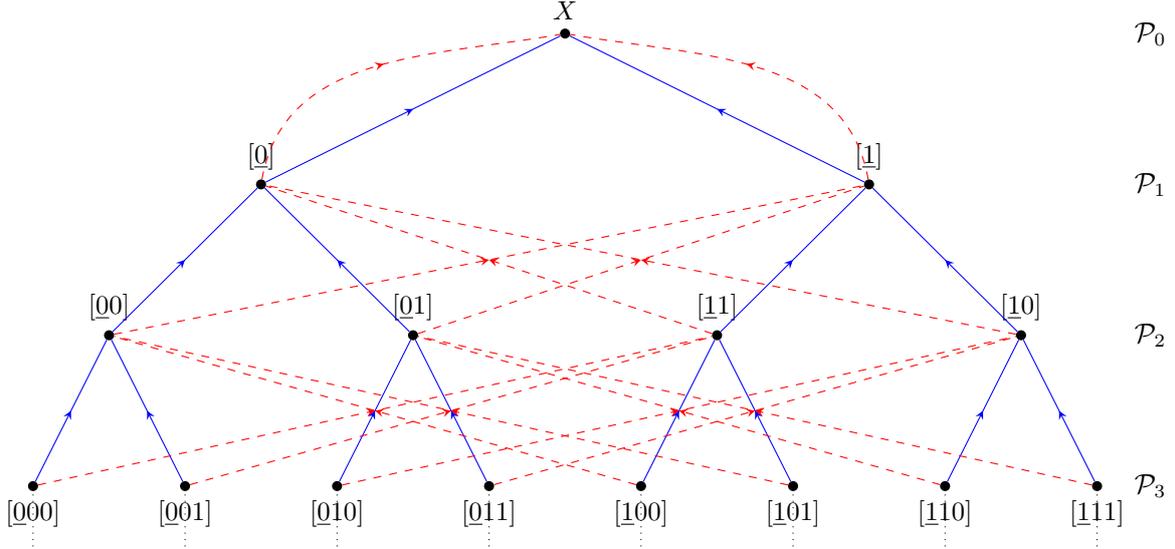

An important dynamical property of the odometer $(X,\operatorname{ad})$ is that it is a \textit{minimal} dynamical system, meaning that there are no proper, non-empty subsets $Y \subsetneq X$ that are closed and $\operatorname{ad}$-invariant. As before, this dynamical property can be characterized purely in terms of the combinatorics of the $h$-diagram $(F,D)$. We will pursue this objective in the upcoming, where we will characterize the broader concept of \textit{essentially minimal} dynamical systems. Before that, however, let us examine our final concrete example, which will serve as a prototype for essential minimality.

\subsection{Example: the shift map on the one-point compactification of the integers}\label{example-compactified.odometer}

Finally, let us consider the shift map $(X,\operatorname{add})$ defined on the totally disconnected compact space $X = \Z^{\ast}$, which is \textit{not} a Cantor space. The homeomorphism here is simply the addition map $\operatorname{add} \colon X \to X$ defined by $\operatorname{add}(n) = n+1$ and $\operatorname{add}(\infty) = \infty$. Recall that the $\operatorname{add}$-refined sequence of partitions we considered in Section \ref{subsection-dynamical.systems.partitions} is given by $\calP_0 = \{X\}$ and $\calP_{n+1} = \{\{j\} \mid j \in \{-n,\dots,n\}\} \cup \{ V_n\}$ for $n \geq 0$. We use this sequence to construct the associated $h$-diagram $(F,D)$.

Set $F^{0,n} := \calP_n$, so that $|F^{0,0}| = 1$ and $|F^{0,n}| = 2n$ for $n \geq 1$. As with our previous examples, the zeroth level remains unchanged, so we turn our attention to the $n$-th level $(F^{0,n} \sqcup F^{0,n+1},F^{1,n})$ for $n \geq 1$. Here $|F^{0,n}| = 2n$ and $|F^{0,n+1}| = 2n+2$. Thus, $F^{1,n}$ contains $2n+2$ blue edges and $2n+2$ red edges. The blue edges correspond to the trivial inclusions $\{j\} \subseteq \{j\}$ for $j \in \{-n+1,\dots,n-1\}$ and $\{-n\}, \, \{n\}, \, V_n \subseteq V_{n-1}$. 

The red edges correspond to $\{j\} \subseteq \operatorname{add}\left(\{j-1\}\right)$ for $j \in \{-n+2,\dots,n\}$ and $\{-n\}, \, \{-n+1\}, \, V_n \subseteq \operatorname{add} \left( V_{n-1} \right)$ if $n$ is even, and to $\{j\} \subseteq \operatorname{add}^{-1}\left(\{j+1\}\right)$ for $j \in \{-n,\dots,n-2\}$ and $\{n-1\}, \, \{n\}, \, V_n \subseteq \operatorname{add}^{-1} \left( V_{n-1} \right)$ if $n$ is odd.

To illustrate, take $n=1$. The inclusions $\{0\} \subseteq \{0\}$ and $\{-1\}, \, \{1\}, \, V_1 \subseteq V_0$ dictate the targets of the blue arrows originating from $\calP_1$. These are drawn as $\{0\} \textcolor{blue}{\to} \{0\}$, $\{-1\} \textcolor{blue}{\to} V_0$, $\{1\} \textcolor{blue}{\to} V_0$ and $V_1 \textcolor{blue}{\to} V_0$. Simultaneously, the inclusions $\{-1\} \subseteq \operatorname{add}^{-1}(\{0\})$ alongside $\{0\}, \, \{1\}, \, V_1 \subseteq \operatorname{add}^{-1}\left(V_0\right)$ determine the routing of the corresponding red arrows. Thus, $\{-1\} \textcolor{red}{\to} \{0\}$, $\{0\} \textcolor{red}{\to} V_0$, $\{1\} \textcolor{red}{\to} V_0$ and $V_1 \textcolor{red}{\to} V_0$.

The complete structure generated by these rules over the first four levels is visualized in Figure \ref{figure-compactified.odometer}.

\begin{figure}[H]
\begin{tikzpicture}
	\node[circle,fill=black,scale=0.4,label=above:\textcolor{black}{$X$}] (A1) at (-7,6) {};
    \node[circle,fill=black,scale=0.4,label=above:\textcolor{black}{$\{0\}$}] (B1) at (-7,4.5) {};
    \node[circle,fill=black,scale=0.4,label=above:\textcolor{black}{$V_0$}] (B2) at (-4,4.5) {};
    \node[circle,fill=black,scale=0.4,label=above:\textcolor{black}{$\{0\}$}] (C1) at (-7,3) {};
    \node[circle,fill=black,scale=0.4,label=above:\textcolor{black}{$\{-1\}$}] (C2) at (-5,3) {};
    \node[circle,fill=black,scale=0.4,label=above:\textcolor{black}{$\{1\}$}] (C3) at (-3,3) {};
    \node[circle,fill=black,scale=0.4,label=above:\textcolor{black}{$V_1$}] (C4) at (0,3) {};
    \node[circle,fill=black,scale=0.4,label=above:\textcolor{black}{$\{0\}$}] (D1) at (-7,1.5) {};
    \node[circle,fill=black,scale=0.4,label=above:\textcolor{black}{$\{-1\}$}] (D2) at (-5,1.5) {};
    \node[circle,fill=black,scale=0.4,label=above:\textcolor{black}{$\{1\}$}] (D3) at (-3,1.5) {};
    \node[circle,fill=black,scale=0.4,label=above:\textcolor{black}{$\{-2\}$}] (D4) at (-1,1.5) {};
    \node[circle,fill=black,scale=0.4,label=above:\textcolor{black}{$\{2\}$}] (D5) at (1,1.5) {};
    \node[circle,fill=black,scale=0.4,label=above:\textcolor{black}{$V_2$}] (D6) at (4,1.5) {};
	\node[circle,fill=black,scale=0.4,label=below:\textcolor{black}{$\{0\}$}] (E1) at (-7,0) {};
	\node[circle,fill=black,scale=0.4,label=below:\textcolor{black}{$\{-1\}$}] (E2) at (-5,0) {};
	\node[circle,fill=black,scale=0.4,label=below:\textcolor{black}{$\{1\}$}] (E3) at (-3,0) {};
	\node[circle,fill=black,scale=0.4,label=below:\textcolor{black}{$\{-2\}$}] (E4) at (-1,0) {};
	\node[circle,fill=black,scale=0.4,label=below:\textcolor{black}{$\{2\}$}] (E5) at (1,0) {};
	\node[circle,fill=black,scale=0.4,label=below:\textcolor{black}{$\{-3\}$}] (E6) at (3,0) {};
	\node[circle,fill=black,scale=0.4,label=below:\textcolor{black}{$\{3\}$}] (E7) at (5,0) {};
	\node[circle,fill=black,scale=0.4,label=below:\textcolor{black}{$V_3$}] (E8) at (7,0) {};
	\draw[-,blue,postaction={on each segment={mid arrow=blue}}] (B1) to (A1);
	\draw[-,very thick,blue,postaction={on each segment={mid arrow=blue}}] (B2) to (A1);
	\draw[-,blue,postaction={on each segment={mid arrow=blue}}] (C1) to (B1);
	\draw[-,blue,postaction={on each segment={mid arrow=blue}}] (C2) to (B2);
	\draw[-,blue,postaction={on each segment={mid arrow=blue}}] (C3) to (B2);
	\draw[-,very thick,blue,postaction={on each segment={mid arrow=blue}}] (C4) to (B2);
	\draw[-,blue,postaction={on each segment={mid arrow=blue}}] (D1) to (C1);
	\draw[-,blue,postaction={on each segment={mid arrow=blue}}] (D2) to (C2);
	\draw[-,blue,postaction={on each segment={mid arrow=blue}}] (D3) to (C3);
	\draw[-,blue,postaction={on each segment={mid arrow=blue}}] (D4) to (C4);
    \draw[-,blue,postaction={on each segment={mid arrow=blue}}] (D5) to (C4);
	\draw[-,very thick,blue,postaction={on each segment={mid arrow=blue}}] (D6) to (C4);
	\draw[-,blue,postaction={on each segment={mid arrow=blue}}] (E1) to (D1);
	\draw[-,blue,postaction={on each segment={mid arrow=blue}}] (E2) to (D2);
    \draw[-,blue,postaction={on each segment={mid arrow=blue}}] (E3) to (D3);
	\draw[-,blue,postaction={on each segment={mid arrow=blue}}] (E4) to (D4);
    \draw[-,blue,postaction={on each segment={mid arrow=blue}}] (E5) to (D5);
	\draw[-,blue,postaction={on each segment={mid arrow=blue}}] (E6) to (D6);
    \draw[-,blue,postaction={on each segment={mid arrow=blue}}] (E7) to (D6);
	\draw[-,very thick,blue,postaction={on each segment={mid arrow=blue}}] (E8) to (D6);
	\draw[dashed,red,postaction={on each segment={mid arrow=red}}] (B1) to [out=90+45,in=180+45] (A1);
	\draw[dashed,red,postaction={on each segment={mid arrow=red}}] (B2) to [out=90+10,in=360-10] (A1);
	\draw[dashed,red,postaction={on each segment={mid arrow=red}}] (C1) to (B2);
	\draw[dashed,red,postaction={on each segment={mid arrow=red}}] (C2) to (B1);
	\draw[dashed,red,postaction={on each segment={mid arrow=red}}] (C3) to [out=90-20,in=360-45] (B2);
	\draw[dashed,red,postaction={on each segment={mid arrow=red}}] (C4) to [out=90+10,in=360-10] (B2);
	\draw[dashed,red,postaction={on each segment={mid arrow=red}}] (D1) to (C2);
	\draw[dashed,red,postaction={on each segment={mid arrow=red}}] (D2) to (C4);
	\draw[dashed,red,postaction={on each segment={mid arrow=red}}] (D3) to (C1);
    \draw[dashed,red,postaction={on each segment={mid arrow=red}}] (D4) to [out=90+20,in=180+45] (C4);
    \draw[dashed,red,postaction={on each segment={mid arrow=red}}] (D5) to (C3);
    \draw[dashed,red,postaction={on each segment={mid arrow=red}}] (D6) to [out=90+10,in=360-10] (C4);
    \draw[dashed,red,postaction={on each segment={mid arrow=red}}] (E1) to (D3);
    \draw[dashed,red,postaction={on each segment={mid arrow=red}}] (E2) to (D1);
    \draw[dashed,red,postaction={on each segment={mid arrow=red}}] (E3) to (D5);
    \draw[dashed,red,postaction={on each segment={mid arrow=red}}] (E4) to (D2);
    \draw[dashed,red,postaction={on each segment={mid arrow=red}}] (E5) to (D6);
    \draw[dashed,red,postaction={on each segment={mid arrow=red}}] (E6) to (D4);
    \draw[dashed,red,postaction={on each segment={mid arrow=red}}] (E7) to [out=90-20,in=360-45] (D6);
    \draw[dashed,red,postaction={on each segment={mid arrow=red}}] (E8) to [out=90+10,in=360-10] (D6);
	\node at (-7.7,6){$\calP_0$};
	\node at (-7.7,4.5){$\calP_1$};
	\node at (-7.7,3){$\calP_2$};
  	\node at (-7.7,1.5){$\calP_3$};
    \node at (-7.7,0){$\calP_4$};
	\draw[black,dotted] (-7,-0.8) -- (-7,0);
	\draw[black,dotted] (-5,-0.8) -- (-5,0);
	\draw[black,dotted] (-3,-0.8) -- (-3,0);
	\draw[black,dotted] (-1,-0.8) -- (-1,0);
	\draw[black,dotted] (1,-0.8) -- (1,0);
	\draw[black,dotted] (3,-0.8) -- (3,0);
	\draw[black,dotted] (5,-0.8) -- (5,0);
	\draw[black,dotted] (7,-0.8) -- (7,0);
\end{tikzpicture}
\caption{The first four levels of the $h$-diagram associated to the shift map on $\Z^{\ast}$. The blue edges are solid, and the red edges are dashed.}
\label{figure-compactified.odometer}
\end{figure}
Note that from Figure \ref{figure-compactified.odometer}, we can readily infer the presence of a fixed point for $\operatorname{add}$, which is $\bigcap_{n \geq 0} V_n = \{\infty\}$, as expected.

As mentioned previously, this serves as a prototypical example of an essentially minimal dynamical system, a concept we will formally characterize in the next section.

\section{A sampling of dynamical properties read from the \texorpdfstring{$h$}{}-diagram}\label{section:dynamical_properties}

In the previous section, we detailed the construction of the $h$-diagram $(F,D)$ associated with a dynamical system $(X,h)$. Recall that the definition of the red edges is strictly governed by the homeomorphism $h$. This naturally raises a fundamental question: to what extent can the dynamical properties of $(X,h)$ be strictly ``read off'' from the purely combinatorial data of its $h$-diagram?

In this section, we demonstrate that the $h$-diagram is indeed a powerful tool that encodes several key dynamical features of the system. We provide a sampling of such properties, beginning with global periodicity.

\subsection{Globally periodic homeomorphisms}

The simplest type of dynamical system is one in which the homeomorphism $h$ has finite order, meaning that $h^m = \mathrm{id}_X$ for some integer $m \geq 1$. When this occurs, the homeomorphism $h$ is said to be \textit{globally periodic} with period $m$. In such a system, the action of $\Z$ on $X$ induced by $h$ descends to an action of the finite group $\Z_m$ on $X$. Consequently, one should expect this global periodicity to severely restrict the structure of the associated $h$-diagram. In what follows, we characterize exactly how the condition $h^m = \mathrm{id}_X$ manifests in $(F,D)$. We begin with a definition.

\begin{definition}[Global periodicity for $h$-diagrams]\label{definition:global_periodicity}
Fix $m \in \N$. An $h$-diagram $(F,D)$ has \textit{global periodicity $m$} if for each path $\mu \in \operatorname{Path}_{\text{fin}}(F^1)$ consisting of alternating red and blue edges (i.e., consecutive edges in $\mu$ have different colors) and containing exactly $m$ red edges, there exists a path $\mu' \in \operatorname{Path}_{\text{fin}}(F^1)$ consisting entirely of blue edges such that
$$r(\mu) = r(\mu') \quad \text{ and } \quad s(\mu) = s(\mu').$$
\end{definition}

Pictorially, global periodicity $m$ states that, for any pair of vertices $v,w \in F^0$, if it is possible to go from $w$ to $v$ by following a path of alternating red and blue edges having exactly $m$ red edges, then it is also possible to go from $w$ to $v$ by another path consisting entirely of blue edges. This is reflected in Figure \ref{figure-property.Zm}.

Note that an alternating path containing exactly $m$ red edges must have length $2m-1$, $2m$, or $2m+1$, depending on whether it starts and/or ends with a blue edge. However, to establish global periodicity $m$, it suffices to verify the condition for paths of length exactly $2m-1$ (those beginning and ending with a red edge). Indeed, if an alternating path $\mu$ begins or ends with a blue edge, one can simply apply the property to the maximal subpath that starts and ends with red edges, and then absorb the extremal blue edges of $\mu$ into $\mu'$ to satisfy the required property.

\begin{figure}[H]
\centering
\begin{tikzpicture}
	\path [draw=blue,postaction={on each segment={mid arrow=blue}}]
		(-1.5,-4) -- node[left,blue]{$e_2$} (-1,-3)
		(-2,-2) -- node[above,blue]{$e_1$} (-0.5,-1)
		;
    \draw[dotted,blue,postaction={on each segment={mid arrow=blue}}] (-1,-5) -- node[above,blue]{$e'_4$} (1.5,-4);
    \draw[dotted,blue,postaction={on each segment={mid arrow=blue}}] (1.5,-4) -- node[left,blue]{$e'_3$} (1,-3);
    \draw[dotted,blue,postaction={on each segment={mid arrow=blue}}] (1,-3) -- node[left,blue]{$e'_2$} (1.5,-2);
    \draw[dotted,blue,postaction={on each segment={mid arrow=blue}}] (1.5,-2) -- node[left,blue]{$e'_1$} (0.5,-1);
    \draw[dotted,blue,postaction={on each segment={mid arrow=blue}}] (0.5,-1) -- node[right,blue]{$e'_0$} (0,0);
    \path [dashed,draw=red,postaction={on each segment={mid arrow=red}}]
		(-1,-5) -- node[left,red]{$f_2$} (-1.5,-4)
		(-1,-3) -- node[above,red]{$f_1$} (-2,-2)
        (-0.5,-1) -- node[left,red]{$f_0$} (0,0)
		;
	\node[circle,fill=black,scale=0.4,label=above:$v$] (T) at (0,0) {};
    \node[circle,fill=black,scale=0.4] (T) at (-0.5,-1) {};
    \node[circle,fill=black,scale=0.4] (T) at (0.5,-1) {};
    \node[circle,fill=black,scale=0.4] (T) at (-2,-2) {};
    \node[circle,fill=black,scale=0.4] (T) at (1.5,-2) {};
    \node[circle,fill=black,scale=0.4] (T) at (-1,-3) {};
    \node[circle,fill=black,scale=0.4] (T) at (1,-3) {};
    \node[circle,fill=black,scale=0.4] (T) at (-1.5,-4) {};
    \node[circle,fill=black,scale=0.4] (T) at (1.5,-4) {};
    \node[circle,fill=black,scale=0.4,label=below:$w$] (T) at (-1,-5) {};
\end{tikzpicture}
\caption{Visualization of \textit{global periodicity $m$} for $m = 3$. The solid and dotted edges are blue, while the dashed edges are red.}
\label{figure-property.Zm}
\end{figure}
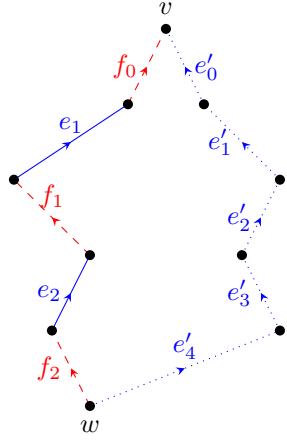

We are now ready to relate global periodicity of $h$ with global periodicity of the associated $h$-diagram.

\begin{theorem}\label{theorem:global_periodicity}
Let $X$ be a totally disconnected compact metric space and $h$ a homeomorphism on $X$. Let $\{\calP_n\}_{n \geq 0}$ be an $h$-refined sequence of partitions of $X$, and let $(F,D)$ denote the associated $h$-diagram. Then, for any $m \in \N$, the following are equivalent:
\begin{enumerate}
\item[(1)] $h$ is globally periodic with period $m$.
\item[(2)] $(F,D)$ has global periodicity $m$.
\end{enumerate}
\end{theorem}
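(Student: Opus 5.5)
The plan is to translate alternating paths into inclusions of blocks, using only the definition of the edges in Construction \ref{construction-h.diagram}. By the remark after Definition \ref{definition:global_periodicity}, it suffices to consider alternating paths $\mu = f_0\, e_1\, f_1\, e_2 \cdots e_{m-1}\, f_{m-1}$ that begin and end with a red edge. Such a path has length $2m-1$, range $v := r(\mu) \in \calP_n$ and source $w := s(\mu) \in \calP_{n+2m-1}$.

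\textbf{Key computation (parity bookkeeping).} The red edge $f_i$ has range at level $n+2i$. Since all these levels have the same parity as $n$, every red edge of $\mu$ uses the same exponent $\varepsilon := (-1)^n$. Write the intermediate vertices as $v = Z_0, Z_1, \dots, Z_{2m-1} = w$. The red edges give $Z_{2i+1} \subseteq h^{\varepsilon}(Z_{2i})$, and the blue edges give $Z_{2i+2} \subseteq Z_{2i+1}$. Chaining these inclusions and using that $h$ is a bijection preserving inclusions yields
$$ w \subseteq h^{m\varepsilon}(v). $$
In the other direction, there is a blue path from $w$ to $v$ if and only if $w \subseteq v$. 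Indeed, the sequence of partitions is refined, so $w$ sits in a unique chain of blocks $w \subseteq Z'_{N-1} \subseteq \cdots \subseteq Z'_n$ with $Z'_k \in \calP_k$, and these inclusions are exactly the blue edges. I expect this bookkeeping of parities and directions (source versus range, $h$ versus $h^{-1}$) to be the only delicate point.

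\textbf{(1)$\Rightarrow$(2).} If $h^m = \mathrm{id}_X$, then also $h^{-m} = \mathrm{id}_X$, so $h^{m\varepsilon}(v) = v$ whatever the parity of $n$. The key computation then gives $w \subseteq v$, and the blue chain from $w$ up to $v$ is the required path $\mu'$.

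\textbf{(2)$\Rightarrow$(1).}
\begin{enumerate}
\item[(i)] Fix $x \in X$ and an even $n$, so that $\varepsilon = 1$. Let $w \in \calP_{n+2m-1}$ be the block containing $x$.
\item[(ii)] By Observation \ref{observations-1}(1), every vertex at level $\geq 1$ has exactly one outgoing blue edge and exactly one outgoing red edge. Starting from $w$ and going upward, I follow red, blue, red, $\dots$, red. This produces an alternating path $\mu$ with $m$ red edges that begins and ends with red, and it determines a vertex $v := r(\mu) \in \calP_n$.
\item[(iii)] The key computation gives $x \in w \subseteq h^m(v)$, so $h^{-m}(x) \in v$.
\item[(iv)] Global periodicity $m$ provides a blue path from $w$ to $v$, so $w \subseteq v$ and hence $x \in v$.
\end{enumerate}
Thus $x$ and $h^{-m}(x)$ lie in a common block of $\calP_n$ for every even $n$. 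Since the sequence is refined, the maximum block diameter tends to $0$ (condition (c) of Lemma \ref{lemma-technical.1}), so $d(x, h^{-m}(x)) = 0$. Hence $h^{-m}(x) = x$ for all $x \in X$, that is, $h^m = \mathrm{id}_X$.
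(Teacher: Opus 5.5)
Your proposal is correct and follows essentially the same route as the paper. In one direction you chain the red and blue inclusions along a red-ended alternating path to get $s(\mu) \subseteq h^{\pm m}(r(\mu))$. In the other you build the upward alternating path from the block of $x$ at an odd level, and conclude that $h^{-m}(x)$ lies in the same block of $\calP_n$ as $x$ for infinitely many $n$. The only cosmetic differences are that you handle both parities at once via $\varepsilon = (-1)^n$, and you finish with vanishing diameters rather than $\bigcap_n Z_n(x) = \{x\}$.
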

\begin{proof}
Suppose first that $h$ is globally periodic with period $m$, so that $h^m = \mathrm{id}_X$, As remarked before, it suffices to consider a $(2m-1)$-path $\mu = f_0 \, e_1 \, f_1 \, \cdots \, e_{m-1} \, f_{m-1}$ consisting of alternating blue and red edges, starting and ending with a red edge.

For each $i \in \{0,\dots,m-1\}$, define $Z_{2i} := r(f_i)$ and $Z_{2i+1} := s(f_i)$. Thus $f_i = f(Z_{2i+1},Z_{2i})$, and moreover $e_i = e(Z_{2i},Z_{2i-1})$. Each $Z_j$ corresponds to a clopen set of $X$, and there exists $n \geq 0$ such that $Z_j \in \calP_{n+j}$ for all $j \in \{0,\dots,2m-1\}$.

We proceed by assuming that $n$ is even; the case where $n$ is odd is analogous (noting that $h^{-m} = \text{id}_X$) and is thus omitted. By construction, we have the relations $Z_{2i+1} \subseteq h(Z_{2i})$ and $Z_{2i} \subseteq Z_{2i-1}$ for all $i \in \{1,\dots,m-1\}$, together with the relation $Z_1 \subseteq h(Z_0)$. Thus,
$$Z_{2m-1} \subseteq h(Z_{2m-3}) \subseteq h^2(Z_{2m-5}) \subseteq \cdots \subseteq h^{m-1}(Z_1) \subseteq h^m(Z_0).$$
Since $h^m = \mathrm{id}_X$ by hypothesis, we conclude that $Z_{2m-1} \subseteq Z_0$. Because the partition sequence is refining, there exists a unique sequence of clopen sets $\tilde{Z}_j \in \calP_{n+j}$, $j \in \{1,\dots,2m-2\}$, such that
$$Z_{2m-1} \subseteq \tilde{Z}_{2m-2} \subseteq \cdots \subseteq \tilde{Z}_1 \subseteq Z_0.$$
This sequence corresponds to a $(2m-1)$-path $\mu' = e'_0 \, e'_1 \, \cdots \, e'_{2m-2}$ consisting of blue edges, where $e'_0 = e(\tilde{Z}_1,Z_0)$, $e'_{2m-2} = e(Z_{2m-1},\tilde{Z}_{2m-2})$ and $e'_j = e(\tilde{Z}_{j+1},\tilde{Z}_j)$ for $j \in \{1,\dots,2m-3\}$. Note that $r(\mu') = Z_0 = r(\mu)$ and $s(\mu') = Z_{2m-1} = s(\mu)$. We therefore conclude that $(F,D)$ has global periodicity $m$.

Conversely, suppose that $(F,D)$ has global periodicity $m$. Fix $x \in X$. For each $n \geq 0$, let $Z_n(x)$ be the unique clopen set in $\calP_n$ containing $x$. We have $Z_{n+1}(x) \subseteq Z_n(x)$ and $\bigcap_{n \geq 0} Z_n(x) = \{x\}$.

Fix $N \geq 2m-1$ an odd integer. Since each $\calP_{n+1}$ is finer than $\calP_n \wedge h(\calP_n)$, there exists a sequence of clopen sets $\tilde{Z}_j \in \calP_j$ for $j \in \{N-2m+1,\dots,N-2,N-1\}$, satisfying the relations $Z_N(x) \subseteq h(\tilde{Z}_{N-1})$ and $\tilde{Z}_{N-2i+1} \subseteq \tilde{Z}_{N-2i} \subseteq h(\tilde{Z}_{N-2i-1})$ for all $i \in \{1,\dots,m-1\}$. In particular,
$$Z_N(x) \subseteq h(\tilde{Z}_{N-1}) \subseteq h^2(\tilde{Z}_{N-3}) \subseteq \cdots \subseteq h^{m-1}(\tilde{Z}_{N-2m+3}) \subseteq h^m(\tilde{Z}_{N-2m+1}),$$
and hence $h^{-m}(x) \in \tilde{Z}_{N-2m+1}$.

Let $f_{m-1} = f(Z_N(x),\tilde{Z}_{N-1})$ and, more generally, let $e_{m-i} = e(\tilde{Z}_{N-2i+1},\tilde{Z}_{N-2i})$ (resp. $f_{m-i-1} = f(\tilde{Z}_{N-2i},\tilde{Z}_{N-2i-1})$) for $i \in \{1,\dots,m-1\}$. Note that $r(f_{m-i}) = s(e_{m-i})$ and $r(e_{m-i}) = s(f_{m-i-1})$ for all $i \in \{1,\dots,m-1\}$, so $\mu := f_0 \, e_1 \, f_1 \, \cdots \, e_{m-1} \, f_{m-1}$ is a $(2m-1)$-path consisting of alternating red and blue edges. Hence, we can apply global periodicity $m$ to $\mu$ to conclude that there exists a $(2m-1)$-path $\mu' = e'_0 \, e'_1 \, \cdots \, e'_{2m-2}$ consisting of blue edges, with $r(\mu) = \tilde{Z}_{N-2m+1} = r(\mu')$ and $s(\mu) = Z_N(x) = s(\mu')$.

Since $\mu'$ is a purely blue path, we deduce that $Z_N(x) \subseteq \tilde{Z}_{N-2m+1}$. Because the partition sequence is refining, $\tilde{Z}_{N-2m+1}$ must be the unique element of $\calP_{N-2m+1}$ containing $x$, so $\tilde{Z}_{N-2m+1} = Z_{N-2m+1}(x)$. Consequently,
$$h^{-m}(x) \in \tilde{Z}_{N-2m+1} = Z_{N-2m+1}(x).$$
Since this holds for all odd $N \geq 2m-1$, we obtain:
$$h^{-m}(x) \in \bigcap_{\substack{N \geq 2m-1 \\ N \text{ odd}}} Z_{N-2m+1}(x) = \bigcap_{n \geq 0} Z_n(x) = \{x\},$$
yielding $h^{-m}(x) = x$, and equivalently, $h^m(x) = x$. Since $x \in X$ was arbitrary, we conclude that $h^m = \mathrm{id}_X$.
\end{proof}

Thus, Theorem \ref{theorem:global_periodicity} establishes that the global periodicity of the system is faithfully captured by the combinatorics of its associated $h$-diagram. We next explore how similar combinatorial features can be used to detect (essential) minimality.

\subsection{Essentially minimal dynamical systems}\label{subsection:ess_min}

In this section, we characterize the class of $h$-diagrams that give rise to essentially minimal dynamical systems, emphasizing the special and important case of minimal systems.

\begin{definition}\label{definition:ess_min}\cite[Definition 1.2]{HPS1992}
Let $(X,h)$ be a dynamical system where $X$ is a totally disconnected compact metric space and $h \colon X \to X$ is a homeomorphism. A set $Z \subseteq X$ is called \textit{minimal} if it is closed, non-empty, $h$-invariant (i.e., $h(Z) = Z$), and it is minimal among such sets --- meaning that if $Z' \subseteq Z$ is another closed, non-empty, $h$-invariant subset of $X$, then $Z' = Z$.

We say that the pair $(X,h)$ is \textit{essentially minimal} if there is a unique minimal set, say $Y$. If $Y = X$, we say that $(X,h)$ is \textit{minimal}.
\end{definition}

By the hypothesis on $X$, a standard application of Zorn's Lemma shows that minimal sets always exist. Consequently, every closed, non-empty, $h$-invariant subset of $X$ contains a minimal set. In particular, if $(X,h)$ is essentially minimal with unique minimal set $Y$, then $Y$ is contained in every closed, non-empty, $h$-invariant subset of $X$.

We now introduce a graph-theoretic property for $h$-diagrams that mirrors essential minimality.

\begin{definition}
Let $(F,D)$ be a separated Bratteli diagram. Let $v,w \in F^0$. We say that $w$ is \textit{connected} to $v$, denoted by $w \leadsto v$, if there exists a path $\mu \in \operatorname{Path}_{\text{fin}}(F^1)$ such that $s(\mu) = w$ and $r(\mu) = v$. 
\end{definition}
Note that if $w \leadsto v$, it is necessarily true that $w \in F^{0,n}$ and $v \in F^{0,m}$ for some $0 \leq m \leq n$. We also admit paths of zero length, meaning $v \leadsto v$ for any vertex $v \in F^0$.

\begin{definition}[Essential minimality for diagrams]\label{definition:property_em}
Let $(F,D)$ be a separated Bratteli diagram. An infinite blue path $\gamma = e_0 \, e_1 \, \cdots \in \operatorname{Path}_{\infty}(F^1)$ satisfies \textit{property (EM)} if for every $i \geq 0$, there exists $n_i \in \N_0$ such that every vertex $v \in F^{0,n_i}$ is connected to $r(e_i)$.

We say that $(F,D)$ is \textit{essentially minimal} if there exists an infinite blue path $\gamma \in \operatorname{Path}_{\infty}(F^1)$ satisfying property (EM).
\end{definition}

Roughly speaking, property (EM) asserts the existence of an infinite blue path such that, for any vertex on this path, there is always some level in the graph $(F,D)$ from which every vertex is connected to the chosen vertex.

We now state the main theorem of this subsection. The remainder of the subsection is devoted to proving this result and deriving several corollaries.

\begin{theorem}\label{theorem:ess_min}
Let $X$ be a totally disconnected compact metric space and $h$ a homeomorphism on $X$. Let $\{\calP_n\}_{n \geq 0}$ be an $h$-refined sequence of partitions of $X$, and let $(F,D)$ denote the associated $h$-diagram. The following are equivalent:
\begin{enumerate}
\item[(1)] $(X,h)$ is essentially minimal.
\item[(2)] $(F,D)$ is essentially minimal.
\end{enumerate}
\end{theorem}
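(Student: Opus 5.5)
The plan is to reduce both directions to a dictionary between connectivity $w \leadsto v$ in $(F,D)$ and relations of the form $w \subseteq h^a(v)$ in $X$. I will then use the classical characterization of essential minimality (cf. \cite{HPS1992}). This characterization says that $(X,h)$ is essentially minimal if and only if there is a point $y \in X$ such that, for every clopen neighbourhood $U$ of $y$, one has $X = \bigcup_{a \in \Z} h^a(U)$; by compactness a finite union $\bigcup_{|a| \le M} h^a(U)$ then suffices. I will include a short proof of this criterion.

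\emph{Criterion.} If $Y$ is the unique minimal set and $y \in Y$, then every orbit closure contains a minimal set, hence contains $Y$ and in particular $y$. So every orbit meets $U$, which gives the covering. Conversely, if $y$ has the covering property, then every closed non-empty invariant set meets every clopen neighbourhood of $y$, hence contains $y$. It therefore contains $\overline{\{h^k(y)\}}$, and so any two minimal sets coincide.

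\emph{Lemma A.} If $\mu$ is a path of length $L$ with $s(\mu)=w$ and $r(\mu)=v$, then $w \subseteq h^a(v)$ for some $|a| \le L$. This follows immediately by composing the inclusions: a blue edge gives an inclusion, and a red edge gives an inclusion into $h^{\pm1}$ of the range, as in the proof of Theorem \ref{theorem:global_periodicity}.

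\emph{Lemma B.} This is the converse, and I expect it to be the main obstacle. Let $v \in \calP_j$ and $w \in \calP_k$ with $w \subseteq h^a(v)$ and $k - j \ge 2|a|+2$. Then $w \leadsto v$. First, by induction from Definition \ref{definition:h_refined}, $\calP_k \precsim h^c(\calP_n)$ for all $|c| \le k-n$.
\begin{itemize}
\item I build blocks $Z_n \in \calP_n$ for $j \le n \le k$ with $Z_j = v$ and $w \subseteq h^{b_n}(Z_n)$, where $b_j = a$.
\item Passing from level $n$ to level $n+1$, I use a red edge if $h^{(-1)^n}$ moves $b_n$ toward $0$, setting $b_{n+1} = b_n - (-1)^n$. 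Otherwise I use a blue edge, keeping $b_{n+1} = b_n$.
\item In each case $Z_{n+1}$ is the unique block of $\calP_{n+1}$ containing $h^{-b_{n+1}}(w)$. This set lies in a single block because $\calP_k \precsim h^{b_{n+1}}(\calP_{n+1})$. The edge $Z_{n+1} \to Z_n$ exists because $\calP_{n+1}$ refines $\calP_n \wedge h^{\pm1}(\calP_n)$ and the relevant sets intersect.
\item Since red moves toward $0$ are available at every other level, $b_n$ reaches $0$ within $2|a|+1$ steps. After that, blue edges alone lead down to $w$.
\end{itemize}
The delicate point is the parity bookkeeping: the sign of the red edge at level $n$ is $(-1)^n$. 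This is why the slack $2|a|+2$ is needed.

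\emph{(1)$\Rightarrow$(2).} Pick $y$ in the unique minimal set and let $\gamma$ be the blue path through the blocks $Z_i(y) \ni y$. Fix $i$ and $v = Z_i(y)$. The criterion gives $M$ with $X = \bigcup_{|a| \le M} h^a(v)$. Choose $n_i \ge i + 2M + 2$. Then $\calP_{n_i}$ refines $h^a(\calP_i)$ for all $|a| \le M$. Any $w \in \calP_{n_i}$ meets some $h^a(v)$, and since $w$ lies in a single block of $h^a(\calP_i)$, in fact $w \subseteq h^a(v)$. Lemma B then gives $w \leadsto v$, so $\gamma$ satisfies (EM).

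\emph{(2)$\Rightarrow$(1).} Let $\gamma$ satisfy (EM) with vertices $v_i = r(e_i)$, which are nested blocks. Let $y$ be the point with $\{y\} = \bigcap_i v_i$; this is non-empty by compactness and a singleton since diameters tend to zero. Every $w \in \calP_{n_i}$ is connected to $v_i$, so by Lemma A we get $w \subseteq h^a(v_i)$ with $|a| \le n_i$. Hence $X = \bigcup_{|a| \le n_i} h^a(v_i)$. The $v_i$ form a neighbourhood basis at $y$, by condition (c) of Lemma \ref{lemma-technical.1}. So any clopen $U \ni y$ contains some $v_i$, and the covering property holds for $U$. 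The criterion then yields that $(X,h)$ is essentially minimal.
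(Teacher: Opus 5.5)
Your proof is correct. It rests on the same dictionary as the paper (Lemma \ref{lemma-connected.translates}: $w \leadsto v$ iff $w \subseteq h^{j}(v)$ for suitably bounded $j$), but it is organized differently.

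The paper proves the sharp form of that dictionary, in which a level gap of $2i$ corresponds exactly to shifts $|j|\le i$. From it the paper derives the formula $W(Z;m,i)=\bigcup_{|j|\le i}h^j(Z)$ of Proposition \ref{proposition-nice.formula}. It then argues (1)$\Rightarrow$(2) by contradiction: if (EM) failed at $Z_m(y)$, the decreasing non-empty clopen sets $X\setminus W(Z;m,i)$ would produce, via Cantor's intersection theorem, a point whose orbit closure misses $y$.

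You instead route both directions through the point criterion from \cite{HPS1992}. This turns (1)$\Rightarrow$(2) into a direct finite-subcover argument with an explicit $n_i=i+2M+2$. You also only need non-sharp versions of the dictionary: Lemma A with $|a|\le L$, and Lemma B with slack $2|a|+2$. Your (2)$\Rightarrow$(1) is the paper's argument with the criterion unwound. The finite-subcover and nested-complement arguments are the same compactness fact, so the trade-off is expository: your route gives symmetric directions and explicit constants, while the paper's sharp lemma gives an exact description of the truncated orbits.

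One simplification for Lemma B. You build the path top-down, which obliges you to check $|b_{n+1}|\le k-n-1$ at every level so that $h^{-b_{n+1}}(w)$ lies in a single block; this holds under your slack but is left implicit. Instead, build the path bottom-up from $w$:
\begin{itemize}
\item at each level, follow the unique outgoing blue or red edge (Observation \ref{observations-1}(1));
\item take red edges at exactly $|a|$ levels of the appropriate parity, and blue edges elsewhere.
\end{itemize}
The top vertex $Z\in\calP_j$ then satisfies $w\subseteq h^a(Z)\cap h^a(v)$, so $Z=v$ by uniqueness. This is the paper's argument, and it shows that $k-j\ge 2|a|$ already suffices.
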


Before starting the proof of Theorem \ref{theorem:ess_min}, we need several preliminary results.

\begin{lemma}\label{lemma-connected.translates}
Let $Z_2 \in \calP_{m+2i}$ and $Z_1 \in \calP_m$ for some $m,i \geq 0$. The following are equivalent.
\begin{enumerate}
\item[(i)] $Z_2 \leadsto Z_1$, regarded as vertices in $F$.
\item[(ii)] $Z_2 \cap h^j(Z_1) \neq \emptyset$ for some $j \in \Z$ with $|j| \leq i$.
\item[(iii)] $Z_2 \subseteq h^j(Z_1)$ for some $j \in \Z$ with $|j| \leq i$.
\end{enumerate}
\end{lemma}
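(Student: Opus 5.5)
The plan is to translate each edge of a path into an inclusion involving $h^{\pm1}$ and then track exponents. The preliminary fact I need is a refinement statement: for all $n,k\ge 0$ and all $j\in\Z$ with $|j|\le k$, the partition $\calP_{n+k}$ is finer than $h^j(\calP_n)$. I prove this by induction on $k$. Definition \ref{definition:h_refined} gives $\calP_{n+1}\precsim \calP_n\wedge h(\calP_n)\wedge h^{-1}(\calP_n)$. Applying the homeomorphism $h^{a}$ preserves refinement, so $h^a(\calP_{n+k})\precsim h^a(\calP_n)$. Composing the single-step relations then gives the claim. Its main use is as follows: if a block $W\in\calP_{n+k}$ meets $h^j(Z)$ with $Z\in\calP_n$ and $|j|\le k$, then $W\subseteq h^j(Z)$, because $h^j(\calP_n)$ is a partition.

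With this fact, (ii)$\Rightarrow$(iii) is immediate, taking $k=2i\ge i\ge|j|$. Also (iii)$\Rightarrow$(ii) is trivial, since blocks are non-empty.

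For (i)$\Rightarrow$(iii), take a path $g_0\cdots g_{2i-1}$ from $Z_2$ to $Z_1$ with vertices $W_k\in\calP_{m+k}$, where $W_0=Z_1$ and $W_{2i}=Z_2$. An edge at level $m+k$ means $W_{k+1}\subseteq h^{c_k}(W_k)$. Here $c_k=0$ if the edge is blue, and $c_k=(-1)^{m+k}$ if it is red. Applying homeomorphisms and chaining these inclusions gives $Z_2\subseteq h^{j}(Z_1)$ with $j=\sum_k c_k$. Among the $2i$ consecutive levels $m,\dots,m+2i-1$, exactly $i$ have each parity. Hence at most $i$ of the $c_k$ equal $+1$ and at most $i$ equal $-1$, so $|j|\le i$.

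The real work is (iii)$\Rightarrow$(i), where a path must be built. Suppose $Z_2\subseteq h^j(Z_1)$ with $|j|\le i$. Fix a colour pattern $c_0,\dots,c_{2i-1}$ with $\sum c_k=j$. For definiteness, take $j\ge0$ and $m$ even; the other cases are symmetric. Choose red at the first $j$ levels of the appropriate parity (levels $m,m+2,\dots,m+2j-2$) and blue everywhere else; this is possible since $j\le i$. Let $t_k=c_0+\dots+c_{k-1}$ and define $W_k$ to be the unique block of $\calP_{m+k}$ containing $h^{-(j-t_k)}(Z_2)$. This is well defined by the preliminary fact once I check that $|j-t_k|\le 2i-k$, which is the main bookkeeping point. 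For $k\le 2j$ we have $j-t_k\le j-\lfloor k/2\rfloor\le 2i-k$, and for $k\ge 2j$ we have $j-t_k=0$.

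Next I check the endpoints. We get $W_{2i}=Z_2$, and $W_0=Z_1$ because $h^{-j}(Z_2)\subseteq Z_1$. For consecutive vertices, $h^{-(j-t_{k+1})}(Z_2)$ lies in $W_{k+1}$. It also lies in $h^{c_k}(W_k)$, since $h^{-(j-t_{k+1})}=h^{c_k}\circ h^{-(j-t_k)}$. So $W_{k+1}$ meets $h^{c_k}(W_k)$, and the preliminary fact with $k=1$ upgrades this to $W_{k+1}\subseteq h^{c_k}(W_k)$. This inclusion is precisely a blue or red edge $W_{k+1}\to W_k$, according to $c_k$, and the resulting path witnesses $Z_2\leadsto Z_1$. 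I expect the sign and parity bookkeeping in this construction to be the only delicate step.
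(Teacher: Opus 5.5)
Your proof is correct, but it is organized differently from the paper's.

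\textbf{The paper's route.} It proves (i)$\Rightarrow$(iii) by strong induction on $i$. It peels off a $2$-path and does a case analysis on the two colours, handling the red--red case via Observation~\ref{observations-1}(2). For (ii)$\Rightarrow$(i), it fixes a colour pattern and climbs \emph{upward} from $Z_2$, taking at each level the unique blue or red parent. The chain of inclusions therefore exists automatically. Only the top vertex $Z^{(0)}$ must then be identified with $Z_1$, which follows from $Z_2\subseteq h^j(Z^{(0)})$ together with $Z_2\cap h^j(Z_1)\neq\emptyset$. Condition (iii) falls out along the way. The paper writes this out only for $j\in\{0,\pm1,i\}$ and asserts that the logic extends.

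\textbf{Your route.} You isolate the multi-step refinement $\calP_{n+k}\precsim h^j(\calP_n)$ for $|j|\le k$. This makes (ii)$\Leftrightarrow$(iii) a statement about partitions alone, independent of the graph. It also lets you replace the induction by a single exponent count $j=\sum_k c_k$. That count absorbs the red--red case with no appeal to Observation~\ref{observations-1}(2), since consecutive red edges contribute $+1$ and $-1$. It also identifies $j$ exactly as the number of red edges at even levels minus the number at odd levels. Your construction for (iii)$\Rightarrow$(i) is uniform in $j$.

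\textbf{The trade-off.} Your top-down definition of $W_k$ needs the well-definedness check $|j-t_k|\le 2i-k$, which the paper's bottom-up climb sidesteps entirely. However, the step you flag as delicate is in fact automatic. Since $t_{2i}=j$, we have $j-t_k=\sum_{k'=k}^{2i-1}c_{k'}$, a sum of $2i-k$ terms each of absolute value at most $1$. So your particular choice of pattern and the floor estimates are unnecessary. Any pattern with $\sum_k c_k=j$ and red edges only at levels of the matching parity works. Such a pattern exists precisely because $|j|\le i$, and this is the only place the hypothesis on $j$ enters.
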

\begin{proof}
We start with $(i) \implies (iii)$. Take a finite path $\mu \in \operatorname{Path}_{\text{fin}}(F^1)$ for which $s(\mu) = Z_2$ and $r(\mu) = Z_1$, and note that $|\mu| = 2i$. We prove the result by strong induction on $i$.

If $i = 0$ then $Z_2 = \mu = Z_1$ and the result is obvious. For $i = 1$, we have $\mu = g_0 \, g_1$ for some edges $g_k \in F^1$. Let $Z' := r(g_1) = s(g_0)$. We distinguish several cases.

If both $g_0$ and $g_1$ are blue edges, then $g_0 = e(Z',Z_1)$ and $g_1 = e(Z_2,Z')$, yielding
$$Z_2 \subseteq Z' \subseteq Z_1 = h^0(Z_1).$$
Thus, the result holds. If $g_0$ is a blue edge and $g_1$ is a red edge, then $g_0 = e(Z',Z_1)$ and $g_1 = f(Z_2,Z')$, so that
$$Z_2 \subseteq h^{\pm 1}(Z') \subseteq h^{\pm 1}(Z_1)$$
depending on the parity of the level $m$. In either case, the result holds. The case where $g_0$ is red and $g_1$ is blue is symmetric to the previous one, so we omit it. Finally, if both $g_0$ and $g_1$ are red edges, then by Observation \ref{observations-1}(2), there exists a unique $2$-path consisting of blue edges with the same source $Z_2$ and range $Z_1$. We are back to the first case, for which the result is true.

Suppose now that the result holds true for all $i \in \{0,1,\dots,i_0\}$ for some $i_0 \geq 1$, and we prove it for $i = i_0+1$. Thus $\mu$ now has length $2i_0+2$. Write $\mu = g_0 \, g_1 \, \cdots \, g_{2i_0+1}$ with $g_k \in F^1$. Let $Z' := s(g_1)$. Then $Z_2$ is connected to $Z'$ by a path of length $2i_0$, and $Z'$ is connected to $Z_1$ by a path of length $2$. By strong induction, there are $j_1,j_2 \in \Z$, with $|j_1| \leq 1$ and $|j_2| \leq i_0$, such that $Z_2 \subseteq h^{j_2}(Z')$ and $Z' \subseteq h^{j_1}(Z_1)$. Hence
$$Z_2 \subseteq h^{j_2}\left(h^{j_1}(Z_1)\right) = h^{j_1+j_2}(Z_1)$$
with $|j_1+j_2| \leq i_0+1$, establishing the result.

The implication $(iii) \implies (ii)$ is trivial, so we will finish by showing $(ii) \implies (i)$. For the sake of brevity, we only prove the case where $m$ is even, leaving the odd case to the reader.

If $j=0$, then $Z_2 \cap Z_1 \neq \emptyset$. By the refining properties of the sequence $\{\calP_n\}_{n \geq 0}$, there exists a unique sequence of clopen sets $Z^{(k)} \in \calP_{m+k}$ for $k \in \{0,1,\dots,2i-1\}$ such that
$$Z_2 \subseteq Z^{(2i-1)} \subseteq Z^{(2i-2)} \subseteq \cdots \subseteq Z^{(1)} \subseteq Z^{(0)}.$$
Thus, $Z_2 \subseteq Z^{(0)}$. Since $Z_2 \cap Z_1 \neq \emptyset$, we deduce that $Z_1 \cap Z^{(0)} \neq \emptyset$. Because $Z_1$ and $Z^{(0)}$ both belong to $\calP_m$, uniqueness implies $Z^{(0)} = Z_1$. Therefore, $Z_2 \subseteq Z_1$, meaning $Z_2$ is connected to $Z_1$ via the path
$$\mu = e(Z^{(1)},Z_1) \, e(Z^{(2)},Z^{(1)}) \, \cdots \, e(Z_2,Z^{(2i-1)}).$$
If $j=1$, we have $Z_2 \cap h(Z_1) \neq \emptyset$. As argued above, there exist unique clopen sets $Z^{(k)} \in \calP_{m+k}$ for $k \in \{0,1,\dots,2i-1\}$ such that
$$Z_2 \subseteq Z^{(2i-1)}, \quad Z^{(2i-1)} \subseteq h(Z^{(2i-2)}), \quad \text{ and } \quad Z^{(2i-2)} \subseteq Z^{(2i-3)} \subseteq \cdots \subseteq Z^{(1)} \subseteq Z^{(0)}.$$
In particular, $Z_2 \subseteq h(Z^{(0)})$. Because $Z_2 \cap h(Z_1) \neq \emptyset$, it follows that $Z_1 \cap Z^{(0)} \neq \emptyset$. Thus, $Z^{(0)} = Z_1$, so $Z_2 \subseteq h(Z_1)$. In this case, a path connecting $Z_2$ to $Z_1$ is
$$\mu = e(Z^{(1)},Z_1) \, e(Z^{(2)},Z^{(1)}) \, \cdots \, e(Z^{(2i-2)},Z^{(2i-3)}) \, f(Z^{(2i-1)},Z^{(2i-2)}) \, e(Z_2,Z^{(2i-1)}).$$
The case $j=-1$ is similar: assuming $Z_2 \cap h^{-1}(Z_1) \neq \emptyset$, we find unique clopen sets $Z^{(k)} \in \calP_{m+j}$ for $k \in \{0,1,\dots,2i-1\}$ such that
$$Z_2 \subseteq h^{-1}(Z^{(2i-1)}), \quad \text{ and } \quad Z^{(2i-1)} \subseteq Z^{(2i-2)} \subseteq \cdots \subseteq Z^{(1)} \subseteq Z^{(0)}.$$
This yields $Z_2 \subseteq h^{-1}(Z^{(0)})$, forcing $Z^{(0)} = Z_1$ as before. Therefore, $Z_2$ is connected to $Z_1$ via the path 
$$\mu = e(Z^{(1)},Z_1) \, e(Z^{(2)},Z^{(1)}) \, \cdots \, e(Z^{(2i-1)},Z^{(2i-2)}) \, f(Z_2,Z^{(2i-1)}).$$
This logic extends to any $j \in \Z$ with $|j| \leq i$. For instance, for the extreme value $j = i$, we consider the unique sequence of clopen sets $Z^{(k)} \in \calP_{m+k}$ for $k \in \{0,1,\dots,2i-1\}$ satisfying
$$Z_2 \subseteq Z^{(2i-1)}, \quad Z^{(k)} \subseteq h(Z^{(k-1)}) \text{ for $k$ odd}, \quad \text{ and } \quad Z^{(k)} \subseteq Z^{(k-1)} \text{ for $k$ even}.$$
It follows that $Z_2 \subseteq h^i(Z^{(0)})$, which implies $Z^{(0)} = Z_1$. Hence, $Z_2$ is connected to $Z_1$ via the alternating path
$$\mu = f(Z^{(1)},Z_1) \, e(Z^{(2)},Z^{(1)}) \, f(Z^{(3)},Z^{(2)}) \, \cdots \, e(Z^{(2i-2)},Z^{(2i-3)}) \, f(Z^{(2i-1)},Z^{(2i-2)}) \, e(Z_2,Z^{(2i-1)}).$$
This concludes the proof ot $(ii) \implies (i)$, and hence of the lemma.
\end{proof}

This lemma enables us to prove an elegant formula characterizing the truncated orbits of a clopen set $Z \in \calP_m$ under $h$.

\begin{proposition}\label{proposition-nice.formula}
For any $Z \in \calP_m$ and any $i \geq 0$, it holds that
$$W(Z;m,i) := \bigsqcup_{\substack{Z' \in \calP_{m+2i} \\ Z' \leadsto Z}} Z' = \bigcup_{|j| \leq i} h^j(Z).$$
\end{proposition}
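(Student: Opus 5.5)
The plan is to prove the two inclusions separately, relying on Lemma \ref{lemma-connected.translates} (applied with $Z_1 := Z \in \calP_m$ and $Z_2 := Z' \in \calP_{m+2i}$) together with the fact that $\calP_{m+2i}$ is a partition of $X$ into clopen blocks. First observe that the union on the left is indeed disjoint, since distinct blocks of $\calP_{m+2i}$ are disjoint. This justifies the notation $\bigsqcup$.

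For the inclusion $W(Z;m,i) \subseteq \bigcup_{|j| \leq i} h^j(Z)$, take a block $Z' \in \calP_{m+2i}$ with $Z' \leadsto Z$. By the implication (i) $\Rightarrow$ (iii) of Lemma \ref{lemma-connected.translates}, there is $j \in \Z$ with $|j| \leq i$ such that $Z' \subseteq h^j(Z)$. Hence every block appearing in the union defining $W(Z;m,i)$ lies in the right-hand side.

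For the reverse inclusion, fix $j$ with $|j| \leq i$ and a point $x \in h^j(Z)$. Since $\calP_{m+2i}$ covers $X$, there is a unique block $Z' \in \calP_{m+2i}$ containing $x$. Then $x \in Z' \cap h^j(Z)$, so this intersection is non-empty, which is condition (ii) of Lemma \ref{lemma-connected.translates}. The implication (ii) $\Rightarrow$ (i) then gives $Z' \leadsto Z$, so $x \in Z' \subseteq W(Z;m,i)$.

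Combining both inclusions gives the equality. I do not expect a genuine obstacle, since all the combinatorial work has already been done in Lemma \ref{lemma-connected.translates}. The only point needing care is that the lemma requires the level difference to be exactly $2i$, and that the vertex $Z'$ in the definition of $W(Z;m,i)$ is taken at level $m+2i$; this matches the statement precisely. The parity of $m$ is absorbed by the lemma, which covers both parities.
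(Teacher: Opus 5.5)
Your proposal is correct and is essentially the paper's own proof. The paper also gets the forward inclusion from (i)$\Rightarrow$(iii) of Lemma \ref{lemma-connected.translates}. For the reverse inclusion it likewise takes the block of $\calP_{m+2i}$ containing a point $x \in h^j(Z)$ and applies (ii)$\Rightarrow$(i).
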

\begin{proof}
This is an immediate consequence of Lemma \ref{lemma-connected.translates}. Indeed, the inclusion $W(Z;m,i) \subseteq \bigcup_{|j| \leq i} h^j(Z)$ is immediate due to the equivalence between (i) and (iii).

For the reverse inclusion, take $x \in h^j(Z)$ for some $|j| \leq i$ and consider a clopen set $Z' \in \calP_{m+2i}$ containing $x$. Then $Z' \cap h^j(Z) \neq \emptyset$, so $Z' \leadsto Z$ by the equivalence between (i) and (ii). Since $x \in Z'$, we conclude that $x \in W(Z;m,i)$, as required.
\end{proof}

We are now in a position to give a full proof of Theorem \ref{theorem:ess_min}. We will write $\calO(x)$ for the orbit of $x \in X$, namely the set $\{h^j(x) \mid j \in \Z\}$.

\begin{proof}[Proof of Theorem \ref{theorem:ess_min}]
Suppose first that $(X,h)$ is essentially minimal, with unique minimal set $Y \subseteq X$. For any $y \in Y$, it holds that $Y = \ol{\calO(y)}$.

Fix one such point $y \in Y$. For each $n \geq 0$, let $Z_n(y)$ be the unique clopen set in $\calP_n$ containing $y$. We claim that the infinite blue path $\gamma := e_0 \, e_1 \, e_2 \, \cdots$, with $e_i = e(Z_{i+1}(y),Z_i(y))$, satisfies property (EM). Suppose this is not the case; this implies in particular that there exists a vertex $Z := Z_m(y) \in \calP_m$ for which, for any $i \in \N_0$, we can find a vertex  in $\calP_{m+2i}$ which is not connected to $Z$. This means precisely that
$$W(Z;m,i) = \bigsqcup_{\substack{Z' \in \calP_{m+2i} \\ Z' \leadsto Z}} Z' \neq X.$$
Observe that $W(Z;m,i) \subseteq W(Z;m,i+1)$. Indeed, given $Z' \in \calP_{m+2i}$, we can always decompose it as
$$Z' = \bigsqcup_{\substack{Z'' \in \calP_{m+2i+2} \\ Z'' \subseteq Z'}} Z''$$
because the sequence of partitions is refined. Therefore,
$$W(Z;m,i) = \bigsqcup_{\substack{Z' \in \calP_{m+2i} \\ Z' \leadsto Z}} \bigsqcup_{\substack{Z'' \in \calP_{m+2i+2} \\ Z'' \subseteq Z'}} Z'' \subseteq \bigsqcup_{\substack{Z' \in \calP_{m+2i} \\ Z' \leadsto Z}} \bigsqcup_{\substack{Z'' \in \calP_{m+2i+2} \\ Z'' \leadsto Z'}} Z'' = \bigsqcup_{\substack{Z'' \in \calP_{m+2i+2} \\ Z'' \leadsto Z}} Z'' = W(Z;m,i+1),$$
where the inclusion holds because $Z'' \subseteq Z'$ implies, in particular, that $Z'' \leadsto Z'$ (see Lemma \ref{lemma-connected.translates}). We therefore obtain a decreasing sequence of non-empty clopen sets
$$X \setminus W(Z;m,0) \supseteq X \setminus W(Z;m,1) \supseteq X \setminus W(Z;m,2) \supseteq \cdots.$$
Since $X$ is compact, Cantor's intersection theorem guarantees the existence of a point
$$z \in \bigcap_{i \geq 0} X \setminus W(Z;m,i) = X \setminus \left( \bigcup_{i \geq 0} W(Z;m,i) \right).$$
On the one hand, by Proposition \ref{proposition-nice.formula}, $z \notin \bigcup_{j \in \Z} h^j(Z)$, which means that $\calO(z) \cap Z = \emptyset$. Because $Z$ is clopen, it follows that $\ol{\calO(z)} \cap Z = \emptyset$; in particular, $y \notin \ol{\calO(z)}$. On the other hand, by essential minimality, $\ol{\calO(z)} \supseteq Y \ni y$. This is a contradiction. Thus, property (EM) is satisfied for the infinite path $\gamma$ chosen at the beginning.\newline

Conversely, assume that $(F,D)$ is essentially minimal, so that there exists an infinite blue path $\gamma = e_0 \, e_1 \, \cdots$ satisfying property (EM). Write $Z_i := r(e_i) \in \calP_i$, which forms a decreasing sequence of clopen sets, and consider the unique point $y \in X$ belonging to the intersection $\bigcap_{i \geq 0} Z_i$. We will show that $Y := \ol{\calO(y)}$ is the unique minimal set for $(X,h)$.

For that purpose, let $Y'$ be any closed, non-empty, $h$-invariant subset of $X$. For a fixed $i \geq 0$, property (EM) ensures that there is some $n_i \in \N_0$ such that every vertex $Z \in \calP_{n_i}$ is connected to $Z_i$. Without loss of generality, we can assume that $n_i - i$ is even. Since $\calP_{n_i}$ is a partition of $X$, there is some vertex $Z' \in \calP_{n_i}$ that intersects $Y'$. By property (EM), this vertex $Z'$ is connected to $Z_i$. Thus, Lemma \ref{lemma-connected.translates} dictates that $Z' \subseteq h^j(Z_i)$ for some $j \in \Z$. Because $Z'$ intersects $Y'$, it immediately follows that $h^j(Z_i) \cap Y' \neq \emptyset$. By the $h$-invariance of $Y'$, we deduce that $Z_i \cap Y' \neq \emptyset$.

Since $Y'$ is a closed subset of the compact space $X$, the sets $Z_i \cap Y'$ are compact. Furthermore, because $\{Z_i\}_{i \geq 0}$ is a decreasing sequence, the intersections $\{Z_i \cap Y'\}_{i \geq 0}$ form a decreasing sequence of non-empty compact sets. By Cantor's intersection theorem, their infinite intersection must be non-empty. Thus,
$$\{y\} \cap Y' = \left(\bigcap_{i \geq 0} Z_i \right) \cap Y' = \bigcap_{i \geq 0} (Z_i \cap Y') \neq \emptyset,$$
which implies that $y \in Y'$. Because $Y'$ is closed and $h$-invariant, it follows that
$$Y = \ol{\calO(y)} \subseteq Y',$$
and we conclude that $Y$ is the unique minimal set for $(X,h)$, as claimed.
\end{proof}

As immediate consequences of the proof of Theorem \ref{theorem:ess_min}, we obtain the following corollaries:

\begin{corollary}\label{corollary-unique.minimal.set}
Let $X$ be a totally disconnected compact metric space and $h$ a homeomorphism on $X$. Let $(F,D)$ denote the $h$-diagram associated to $(X,h)$ with respect to an $h$-refined sequence of partitions of $X$.

Suppose that $(F,D)$ is essentially minimal, and let $\gamma = e_0 \, e_1 \, \cdots$ be any infinite blue path in $F$ satisfying property (EM). Then the unique minimal set $Y$ of $X$ is exactly $Y = \ol{\calO(y)}$, where $y \in X$ is the unique point belonging to the intersection $\bigcap_{i \geq 0} r(e_i)$.
\end{corollary}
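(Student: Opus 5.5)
The plan is to re-read the converse half of the proof of Theorem \ref{theorem:ess_min}. That argument never used any special feature of the chosen infinite blue path beyond property (EM), so it applies verbatim to an arbitrary such path $\gamma = e_0 \, e_1 \, \cdots$. Concretely, we set $Z_i := r(e_i) \in \calP_i$. Since $\gamma$ is blue, $Z_{i+1} \subseteq Z_i$. By conditions (b) and (c) of Lemma \ref{lemma-technical.1} together with compactness, $\bigcap_{i \geq 0} Z_i$ is a single point $y$: the intersection is non-empty by Cantor's intersection theorem, and it has diameter zero because the diameters of the $Z_i$ tend to zero.

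Next, fix any closed, non-empty, $h$-invariant $Y' \subseteq X$ and any $i \geq 0$. Property (EM) gives a level $n_i$ all of whose vertices are connected to $Z_i$. After passing to a deeper level we may assume $n_i - i$ is even. This is harmless, since any vertex of $\calP_{n_i+1}$ is connected by a blue edge to a vertex of $\calP_{n_i}$, and connectedness composes. Choose $Z' \in \calP_{n_i}$ meeting $Y'$. Lemma \ref{lemma-connected.translates} then gives $Z' \subseteq h^j(Z_i)$ for some $j$, so $Y' \cap h^j(Z_i) \neq \emptyset$, and by invariance $Y' \cap Z_i \neq \emptyset$. The sets $Y' \cap Z_i$ form a nested sequence of non-empty compact sets, so they have non-empty intersection, which forces $y \in Y'$. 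Since $Y'$ is closed and invariant, $\ol{\calO(y)} \subseteq Y'$.

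To finish, note that $\ol{\calO(y)}$ is itself closed, non-empty and $h$-invariant. By the previous step it is contained in every such set, so it is a minimal set, and it is contained in every minimal set. Every minimal set therefore equals $\ol{\calO(y)}$, and $(X,h)$ is essentially minimal with unique minimal set $Y = \ol{\calO(y)}$.

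The only point that needs care is the parity adjustment of $n_i$, because Lemma \ref{lemma-connected.translates} is stated only for level differences of the form $2i$. The fix is the one-step blue extension described above. Everything else is a direct transcription of the earlier argument, with the emphasis on the fact that the choice of $\gamma$ was arbitrary.
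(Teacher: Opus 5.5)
Your proposal is correct and follows the paper's route: the corollary is obtained by rerunning the converse half of the proof of Theorem \ref{theorem:ess_min} for an arbitrary path $\gamma$ with property (EM), and showing that $\ol{\calO(y)}$ lies in every closed, non-empty, $h$-invariant set. Your one-step blue extension justifying that $n_i - i$ may be taken even, and your explicit check that this makes $\ol{\calO(y)}$ the unique minimal set, fill in details the paper leaves implicit.
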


\begin{corollary}\label{corollary-minimality}
Let $X$ be a totally disconnected compact metric space and $h$ a homeomorphism on $X$. Let $(F,D)$ denote the $h$-diagram associated to $(X,h)$ with respect to an $h$-refined sequence of partitions of $X$.

Then $(X,h)$ is minimal if and only if every infinite blue path $\gamma \in \operatorname{Path}_{\infty}(F^1)$ satisfies property (EM).
\end{corollary}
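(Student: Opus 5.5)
The plan is to derive Corollary \ref{corollary-minimality} from Theorem \ref{theorem:ess_min} together with Corollary \ref{corollary-unique.minimal.set}, using the fact that infinite blue paths in $F$ correspond bijectively to points of $X$. Indeed, an infinite blue path $\gamma = e_0\,e_1\cdots$ determines a decreasing sequence of blocks $r(e_i) \in \calP_i$. Condition (c) of Lemma \ref{lemma-technical.1}, together with compactness, shows that $\bigcap_{i \ge 0} r(e_i)$ is a single point $y_\gamma$. Conversely, each $x\in X$ gives the blue path through the blocks $Z_i(x)$.

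For the forward direction, assume $(X,h)$ is minimal and fix an arbitrary infinite blue path $\gamma$ with associated point $y = y_\gamma$. I would rerun the first half of the proof of Theorem \ref{theorem:ess_min} with this $y$. That argument only used two facts: $Z_m(y)$ is a clopen neighbourhood of $y$, and every orbit closure $\ol{\calO(z)}$ contains $y$. Under minimality, $\ol{\calO(z)}=X$ for every $z$, so the second fact holds for \emph{every} $y \in X$, not just for points of a distinguished minimal set. The contradiction argument then goes through verbatim. If some $Z=r(e_m)$ failed (EM), the sets $X\setminus W(Z;m,i)$ would form a decreasing sequence of non-empty clopen sets. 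Using the monotonicity $W(Z;m,i)\subseteq W(Z;m,i+1)$ and Cantor's intersection theorem, we would obtain a point $z$. By Proposition \ref{proposition-nice.formula}, the orbit of $z$ avoids $Z$, and since $Z$ is clopen, $\ol{\calO(z)}$ avoids $y$, which contradicts minimality. One small point needs checking. Property (EM) is stated as ``for every $i$ there exists $n_i$,'' and the contradiction argument only produces failure at levels $m+2i$. This is harmless: if every vertex of $\calP_{m+2i}$ connects to $Z$, we simply take $n_m=m+2i$.

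For the converse, assume every infinite blue path satisfies (EM). In particular some path does, so by Theorem \ref{theorem:ess_min} the system $(X,h)$ is essentially minimal with a unique minimal set $Y$. By Corollary \ref{corollary-unique.minimal.set}, applied to any path $\gamma$, we get $Y=\ol{\calO(y_\gamma)}$, which gives $y_\gamma \in Y$. Since every $x\in X$ arises as $y_\gamma$ for the blue path through the blocks $Z_i(x)$, every point lies in $Y$. Hence $Y=X$ and $(X,h)$ is minimal.

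The main obstacle is only in the forward direction: one must make sure the earlier contradiction argument used nothing beyond ``$y$ lies in every orbit closure.'' Once that is confirmed, the corollary is a direct consequence of the two earlier results.
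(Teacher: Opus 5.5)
Your proposal is correct and follows essentially the same route as the paper. The forward direction reruns the first half of the proof of Theorem~\ref{theorem:ess_min} with an arbitrary point $y_\gamma$, using that every orbit closure equals $X$. The converse combines Theorem~\ref{theorem:ess_min}, Corollary~\ref{corollary-unique.minimal.set} and the correspondence between points of $X$ and infinite blue paths. You are simply more explicit than the paper's ``mutatis mutandis'' about why that contradiction argument needs nothing beyond $y$ lying in every orbit closure.
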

\begin{proof}
Recall that an infinite blue path uniquely determines a point in $X$ and, conversely, every point in $X$ determines a unique infinite blue path. Hence, the "only if" implication follows \textit{mutatis mutandis} by the same steps as the "only if" implication of Theorem \ref{theorem:ess_min}.

For the "if" implication, assume every infinite blue path $\gamma \in \operatorname{Path}_{\infty}(F^1)$ satisfies property (EM). By Corollary \ref{corollary-unique.minimal.set}, the unique minimal set $Y$ is then given by $Y = \ol{\calO(y)}$, where $y \in X$ is the point corresponding to \textit{any} infinite blue path. This implies $Y = \ol{\calO(x)}$ for every point $x \in X$, and we must have $Y = X$. Therefore, $(X,h)$ is minimal.
\end{proof}

\begin{example}[the odometer map on the Cantor space]
The example from Section \ref{example-odometer} is an example of a minimal system. We give a proof using Corollary \ref{corollary-minimality}. The first thing to observe is that, for any $n \geq 0$ and any $Z \in \calP_{n+1}$, the odometer map $\operatorname{ad}$ defines a permutation of the set $\calP_{n+1}$ which is transitive, meaning that for any $Z \in \calP_{n+1}$, it holds that
$$\calP_{n+1} = \{Z, \operatorname{ad}(Z), \operatorname{ad}^2(Z), \dots, \operatorname{ad}^{2^{n+1}-1}(Z)\}.$$
More generally, given any cylinder set $Z = [\ul{\epsilon_0} \, \cdots \, \epsilon_n \, \epsilon_{n+1} \, \cdots \, \epsilon_{n+r}] \in \calP_{n+r+1}$, $n,r \geq 0$, there exists an integer $k := k(Z) \in \{0,1,\dots,2^{n+1}-1\}$ such that
$$\{[\ul{\delta_0} \, \delta_1 \, \cdots \, \delta_n \, \epsilon_{n+1} \, \cdots \, \epsilon_{n+r}] \mid \delta_0,\dots,\delta_n \in \{0,1\}\} = \{\operatorname{add}^{-k}(Z), \dots, \operatorname{add}^{-1}(Z), Z, \operatorname{ad}(Z), \dots, \operatorname{ad}^{-k+2^{n+1}-1}(Z)\}.$$

Fix now $Z = [\ul{\delta_0} \, \delta_1 \, \cdots \, \delta_n] \in \calP_{n+1}$, and define $i := 2^{n+1}-1$. Take any $Z' = [\ul{\epsilon_0} \, \epsilon_1 \, \cdots \, \epsilon_n \, \epsilon_{n+1} \, \cdots \, \epsilon_{n+2i}] \in \calP_{n+1+2i}$. If we consider the auxiliary clopen set $\tilde{Z}' := [\ul{\delta_0} \, \delta_1 \, \cdots \, \delta_n \, \epsilon_{n+1} \, \cdots \, \epsilon_{n+2i}]$, then $\tilde{Z}' \subseteq Z$ and, by the previous observation, there is some $j \in \Z$ with $|j| \leq 2^{n+1}-1 = i$ such that
$$Z' = \text{ad}^j(\tilde{Z'}) \subseteq \text{ad}^j(Z),$$
so that $Z' \leadsto Z$ by Lemma \ref{lemma-connected.translates}. This shows that every vertex in $\calP_{n+1+2i}$ is connected to $Z$. Since $Z$ was an arbitrary vertex in $\calP_{n+1}$, we conclude that every infinite blue path in $F$ satisfies property (EM), and hence the system is minimal by Corollary \ref{corollary-minimality}.
\end{example}

\begin{example}[the shift map on the one-point compactification of the integers]
The example from Section \ref{example-compactified.odometer} is an example of an essentially minimal dynamical system, with unique minimal set $Y = \{\infty\}$. We show this using Theorem \ref{theorem:ess_min}.

Our candidate for an infinite blue path satisfying property (EM) is already highlighted in Figure \ref{figure-compactified.odometer}, namely $\gamma = e_0 \, e_1 \, e_2 \, \cdots$ with $e_0 = e(V_0,X)$ and $e_i = e(V_i,V_{i-1})$ for $i \geq 1$. We will show that for each $i \geq 0$ there is some $n_i \in \N_0$ such that every vertex in $\calP_{n_i}$ is connected to $V_i \in \calP_{i+1}$.

Let $n_i := 3i+3 = i+1+ 2(i+1)$. Clearly, $V_{3i+2} \in \calP_{n_i}$ is connected to $V_i$, as $V_{3i+2} \subseteq V_i$. Now take any $\{j\} \in \calP_{n_i}$ with $|j| \leq i+1$. We have
$$\{j\} = \text{add}^{j-(i+1)}(\{i+1\}) \subseteq \text{add}^{j-(i+1)}(V_i)$$
in case $j \geq 0$, with $|j-(i+1)| \leq i+1$, and
$$\{j\} = \text{add}^{j+(i+1)}(\{-i-1\}) \subseteq \text{add}^{j+(i+1)}(V_i)$$
in case $j < 0$, with $|j+(i+1)| \leq i+1$. In either case, we can apply Lemma \ref{lemma-connected.translates} to deduce that $\{j\} \leadsto V_i$. The other vertices of the form $\{j\}$ with $i+1 < |j| \leq n_i-1$ are easier, since
$$\{j\} \subseteq V_{|j|-1} \subseteq V_i,$$
so $\{j\} \leadsto V_i$ by the same lemma. The same is true for $V_{n_i-1} \subseteq V_i$.

This shows that every vertex in $\calP_{n_i}$ is connected to $V_i$, and hence property (EM) is satisfied for the infinite blue path $\gamma$. By Theorem \ref{theorem:ess_min}, we conclude that the system is essentially minimal, with its unique minimal set $Y$ being the closure of the orbit of the point $\infty$, which consists solely of the point $\infty$ itself.
\end{example}

\section*{Acknowledgments}

The author is grateful to Pere Ara and Fernando Lledó for their insightful comments, which helped to improve the manuscript's exposition.

\bibliographystyle{amsplain}

\bibliography{bibliography}

\end{document}